\numberwithin{equation}{section}
\theoremstyle{plain}
\newtheorem*{rep@theorem}{\rep@title}
\newcommand{\newreptheorem}[2]{%
\newenvironment{rep#1}[1]{%
 \def\rep@title{#2 \ref{##1}}%
 \begin{rep@theorem}}%
 {\end{rep@theorem}}}
\newtheorem{theorem}[equation]{Theorem}
\newtheorem{proposition}[equation]{Proposition}
\newtheorem{lemma}[equation]{Lemma}
\newtheorem{corollary}[equation]{Corollary}
\newtheorem{claim}[equation]{Claim}
\newtheorem*{theorem18p}{Theorem 1.8'}
\theoremstyle{remark}
\newtheorem{remark}[equation]{Remark}
\theoremstyle{definition}
\newtheorem{definition}[equation]{Definition}
\newtheorem*{question*}{Question}
\newcommand{\K}{{\mathcal K}}
\renewcommand{\L}{{\mathcal L}}
\newcommand{\M}{{\mathcal M}}
\newcommand{\Q}{\mathbb Q}
\newcommand{\R}{\mathbb R}
\renewcommand{\S}{{\mathcal S}}
\newcommand{\cone}{\operatorname{Cone}}
\newcommand{\Div}{\operatorname{div}}
\newcommand{\Ext}{\operatorname{Ext}}
\newcommand{\Int}{\operatorname{Int}}
\newcommand{\al}{\alpha}
\newcommand{\be}{\beta}
\newcommand{\D}{\partial}
\newcommand{\de}{\delta}
\newcommand{\Lap}{\Delta}
\newcommand{\eps}{\varepsilon}
\newcommand{\ga}{\gamma}
\newcommand{\la}{\lambda}
\newcommand{\La}{\Lambda}
\newcommand{\ol}{\overline}
\newcommand{\ra}{\rightarrow}
\providecommand{\abs}[1]{\lvert #1\rvert}
\providecommand{\norm}[1]{\lvert\lvert #1\rvert\rvert}
\def\XXint#1#2#3{{\setbox0=\hbox{$#1{#2#3}{\int}$}
     \vcenter{\hbox{$#2#3$}}\kern-.5\wd0}}
\begin{document}

\title[Mean curvature flow of mean convex hypersurfaces]{Mean curvature flow of mean convex hypersurfaces}
\author{Robert Haslhofer}
\author{Bruce Kleiner}

\date{\today}
\maketitle


\begin{abstract}
In the last 15 years, White and Huisken-Sinestrari 
developed a far-reaching structure theory for the mean curvature flow of mean convex hypersurfaces.
Their papers \cite{white_size,white_nature,white_subsequent,huisken-sinestrari1,huisken-sinestrari2,huisken-sinestrari3}  provide a package of estimates and structural results that yield a precise description of singularities and
 of high curvature regions in a mean convex flow.

In the present paper, we give a new treatment of the theory of mean convex 
(and $k$-convex) flows. This includes: (1) an estimate for derivatives of curvatures, 
(2) a convexity 
estimate, (3) 
a cylindrical estimate, (4) a global convergence theorem, (5) 
a structure theorem for ancient solutions, and (6) a partial regularity theorem.

Our new proofs are both more elementary and substantially shorter than the original arguments. Our estimates are local and universal.
A key ingredient in our new approach is the new noncollapsing result of Andrews \cite{andrews1}. Some parts are also inspired by the work of Perelman \cite{perelman_entropy,perelman_surgery}.
In a forthcoming paper \cite{haslhofer-kleiner_surgery},
we will give a new construction of mean curvature flow with surgery based on the methods established in the present paper.
\end{abstract}

\tableofcontents

\section{Introduction}

\subsection{Background and history}
The mean curvature flow evolves hypersurfaces in time; the velocity is given by the mean curvature vector.  Mean curvature flow has been 
extensively studied since the pioneering work of Brakke \cite{brakke},  
as it is the most natural parabolic evolution equation for a moving
submanifold.
While the theory was progressing in many fruitful directions, 
there was one persistent central theme:  
the investigation of the structure of singularities, and the development of
 related techniques.  In the last 15 years, this
culminated in the spectacular
work of White \cite{white_size,white_nature,white_subsequent} and Huisken-Sinestrari 
\cite{huisken-sinestrari1,huisken-sinestrari2,huisken-sinestrari3}
on mean curvature flow in the case of mean convex 
hypersurfaces, i.e. hypersurfaces with nonnegative mean curvature.
Their papers give a far-reaching structure theory, providing a package of estimates
that yield a qualitative picture of singularities and a global description of the
large curvature part in a mean convex flow.

\noindent\emph{White's theory in a nutshell.} White uses weak versions of 
mean curvature flow 
-- level set flow \cite{evans-spruck,CGG} and Brakke flow \cite{brakke,Ilmanen} -- 
and the language
of geometric measure theory. Given any mean convex hypersurface $M_0\subset \R^N$ 
(smooth, compact, embedded) there is a unique weak flow (that can be described either as 
level set flow or as Brakke flow) starting at $M_0$.
White's two main theorems describe the size and the nature of the singular set: 
(1) The singular set has parabolic Hausdorff dimension at most $N-2$ 
\cite[Thm. 1]{white_size}; (2) Every limit flow is smooth and convex (a limit flow is a 
weak limit of any sequence of parabolic rescalings where the scaling factors tend to 
infinity) -- in particular, all tangent flows are shrinking spheres, cylinders or planes of
multiplicity one \cite[Thm. 1]{white_nature}, 
\cite[Thm. 3]{white_subsequent}.\footnote{In fact, White proves most of his results in a 
setting that also allows general ambient manifolds and
nonsmooth initial conditions: (1) still holds and (2) also holds
provided $N\leq 7$.} 

\noindent\emph{Huisken-Sinestrari theory in a nutshell.} Huisken-Sinestrari use the language of
smooth differential geometry and partial differential equations.
Their three main estimates are: (1) the convexity estimate \cite[Thm. 3.1]{huisken-sinestrari1}, \cite[Thm. 1.1]{huisken-sinestrari2}, \cite[Thm. 1.4]{huisken-sinestrari3}, which says that regions of high curvature have almost positive definite second fundamental form; (2) the cylindrical estimate \cite[Thm. 1.5]{huisken-sinestrari3} which says that in a $2$-convex flow, i.e. a flow where the sum of the smallest two principal curvatures is nonnegative, regions of high curvature are either uniformly convex or close to a cylinder (assuming $N\geq 4$);
(3) the gradient estimate \cite[Thm. 1.6]{huisken-sinestrari3} which gives derivative bounds depending only at the curvature at a single point.
Based on these estimates, Huisken-Sinestrari succeeded in constructing a mean curvature flow with surgery for $2$-convex hypersurfaces of dimension at least three \cite{huisken-sinestrari3}.
This has elements in common with -- but also some interesting differences with --  
Perelman's Ricci flow with surgery \cite{perelman_entropy,perelman_surgery}.

\noindent\emph{Multiplicity one and noncollapsing.} A crucial step in White's work is ruling 
out multiplicity two hyperplanes as potential blowup limits.
To do this, White uses a sophisticated line of clever arguments (the expanding hole theorem, 
the sheeting theorem, a Bernstein theorem, etc.), see \cite{white_size}.
A similar issue (ruling out the possibility of collapsing with bounded curvature) arises in 
the Ricci flow, where it was handled by Perelman's $\mathcal{W}$-functional and $L$-function
 \cite{perelman_entropy}.  Since Perelman's work, results of this kind --
for either mean curvature flow or Ricci flow -- have been called 
{\em noncollapsing} theorems. In a beautiful recent paper \cite{andrews1}, Andrews has given a 
direct proof of noncollapsing for mean convex flows, using only the maximum principle.

\noindent\emph{Convexity estimate.} Another crucial step in the theory is the convexity 
estimate. It says that points with large mean curvature have almost positive definite second
fundamental form. This is similar to the Hamilton-Ivey pinching estimate for three-dimensional
Ricci flow \cite{Hamilton_survey}, but its proof is 
substantially more involved. The 
Huisken-Sinestrari proof involves  a sophisticated iteration scheme with 
$L^p$-estimates, the Michael-Simons-Sobolov inequality, and 
recursion formulas for the symmetric polynomials in the principal curvatures 
\cite{huisken-sinestrari1,huisken-sinestrari2,huisken-sinestrari3}. 
White's proof that blowup limits have positive semidefinite second fundamental form is based instead on the rigidity case of the maximum principle combined with much of his
geometric-measure-theoretic regularity and structure theory \cite{white_size,white_nature}.

\noindent\emph{Related work.} Using rather different techniques, some aspects of convex solutions and the singularity structure in mean convex flows have been studied by Wang \cite{wang_convex} and Sheng-Wang \cite{sheng_wang}. 
The structure of the singular set in $k$-convex flows has been studied by Head \cite{Head_thesis}, Ecker \cite{Ecker_kconvex}, and Cheeger-Haslhofer-Naber \cite{CHN}.

\subsection{Overview}
In the present paper, we give a new treatment of the theory of mean convex (and $k$-convex) mean curvature flow.

Starting with a quick panorama, the key parts of the theory are the curvature estimate
(Theorem \ref{thm-intro_local_curvature_bounds}), the convexity estimate
(Theorem \ref{thm-intro_convexity_estimate}),
the cylindrical estimate (Theorem \ref{thm-intro_cylindrical}), 
the global convergence theorem (Theorem \ref{thm-intro_h_gives_global_convergence}), 
the structure theorem for ancient solutions (Theorem \ref{thm-intro_smooth_convex_til_extinct}), 
and the partial regularity theorem (Theorem \ref{thm_intro_partial_regularity}).

As in \cite{white_size,white_nature,white_subsequent}, our theorems can be viewed as results about the level set flow. On the other hand, by making use of a viscosity notion of mean curvature 
(Definition \ref{def_viscosity_mean_curvature}) and approximation we are able to formulate all our estimates and proofs
as if we were dealing with smooth mean curvature flow, as in 
\cite{huisken-sinestrari1,huisken-sinestrari2,huisken-sinestrari3}.

A key ingredient in our new approach is the beautiful 
noncollapsing result of Andrews \cite{andrews1}.  We found it  natural
to build his result into our framework by working with 
a class of flows satisfying the conclusion of his theorem; we call 
them $\al$-Andrews flows. In particular, this class contains all mean convex level set flows with smooth initial data.

Our new proofs are both more elementary and 
substantially shorter than the original arguments. 
The reduction in length is illustrated most dramatically by our new proofs of the curvature 
and the convexity estimate.  Together they take only three pages 
(see Section \ref{sec-halfspace_and_consequences}), as opposed to a couple of sophisticated
papers in the original arguments (see background section). For the benefit of readers with little or no prior experience with the mean curvature flow, we have made the exposition as self-contained as possible, and also included several appendices explaining some background material.

Apart from streamlining the proofs, our results are local and depend only on the value of the Andrews constant. We will exploit this local and universal character in our forthcoming paper \cite{haslhofer-kleiner_surgery}, to 
give a new and general construction of mean curvature flow with surgery; in particular, our new construction also works in the case of mean convex surfaces in $\mathbb{R}^3$.


\subsection{Notation and terminology}
We will now recall some standard notions needed for the present paper. For comprehensive 
introductions to the mean curvature flow with a focus on the formation of singularities we 
refer to the books by Ecker \cite{Ecker_book} and Mantegazza \cite{Mantegazza_book}.
We also warmly recommend White's ICM-survey \cite{White_ICM}.

The ambient space $\R^N$ is always assumed to have dimension $N\geq 2$, and we suppress the dependence of constants on $N$. 

A smooth family $\{M_t\subset\R^N\}_{t\in I}$
of closed embedded hypersurfaces, where $I\subset \R$
is an interval, {\em moves by mean curvature flow}
if $M_t=x_t( M)$ for some smooth family of embeddings 
$\{x_t: M\ra \R^N\}_{t\in I}$
satisfying the mean curvature flow equation
$$\frac{\D x_t}{\D t}= \vec{H}_t\,,$$
where $\vec{H}_t: M\ra \R^N$ denotes the mean curvature of $x_t$. Instead of the family $\{M_t\}$ itself, we will typically think
in terms of the evolving family $\{K_t\}$ of the compact domains bounded by the $M_t$'s. More generally, we sometimes also consider families of possibly noncompact closed domains $\{K_t\subseteq U\}_{t\in I}$ ($I\subseteq \R$) in an open set $U\subseteq \R^N$, whose boundaries move by mean curvature flow.

For the mean curvature flow, time scales like distance squared. {\em  Spacetime} $\R^{N,1}$ is defined to be 
$\R^N\times\R$ equipped
with the parabolic metric $d((x_1,t_1),(x_2,t_2))=\max({|x_1-x_2|,|t_1-t_2|^{\frac12}})$.
{\em Parabolic rescaling by $\la\in(0,\infty)$ at $(x_0,t_0)\in\R^{N,1}$} is described by the mapping
$$(x,t)\mapsto (\la (x-x_0),\la^2(t-t_0)).$$
The {\em parabolic ball} with radius $r>0$ and center
$X=(x,t)\in \R^{N,1}$ is the product 
$$
P(x,t,r)=B(x,r)\times (t-r^2,t]\subset \R^{N,1}.
$$
Given a family of subsets $\{K_t\subseteq \R^N\}_{t\in I}$ the {\em spacetime track} is the set
$$
\K=\cup_{t\in I}\;K_t\times \{t\}\subseteq \R^{N,1}\,.
$$
Likewise, given a subset $\K\subseteq \R^{N,1}$, the {\em time $t$ slice} of $\K$
is $$K_t=\{x\in \R^N\mid (x,t)\in \K\}\,.$$ 
We tend to conflate subsets of spacetime with
the corresponding family of time slices.

Given a compact subset $K_0\subset\R^N$, there
is a canonical
family $\{K_t\}_{t\geq 0}$ of closed sets starting at $K_0$, 
the {\em level set flow} of $K_0$. The level set flow has been introduced in 
Evans-Spruck \cite{evans-spruck} and Chen-Giga-Goto \cite{CGG}, and later a completely elementary characterization has been given by 
Ilmanen \cite{Ilmanen}: 
$\{K_t\}_{t\geq 0}$ is the \emph{maximal set flow} starting at $K_0$. We recall that a \emph{set flow} $\{C_t\}$ simply is a family of closed sets satisfying the \emph{avoidance principle}
\begin{equation*}
 C_{t_0}\cap Q_{t_0}=\emptyset\qquad\Rightarrow \qquad C_{t}\cap Q_{t}=\emptyset
\quad \textrm{for all}\,\, t\in[t_0,t_1],
\end{equation*}
whenever $\{Q_t\}_{t\in [t_0,t_1]}$ is a smooth compact mean curvature flow.
When  $K_0\subset\R^N$ is a compact
domain with smooth boundary, the level set 
flow of $K_0$ coincides with
smooth mean curvature flow of $K_0$ for as long as the latter is defined.  
The level set flow $\{K_t\}$ is called \emph{mean convex} if $K_{t_2}\subseteq K_{t_1}$ whenever $t_2\geq t_1$.\footnote{This differs slighly from the definition in \cite{white_size}: we do not require strict inclusion $K_{t_2}\subsetneq K_{t_1}$ for $t_2>t_1$.} Any level set flow starting at a smooth compact domain with nonnegative mean curvature is mean convex.

\subsection{Andrews condition and $\al$-Andrews flows}\label{sec_main_thm_etc}
Before turning to the regularity theory, we begin with the noncollapsing result
of Andrews and a generalization to the nonsmooth setting. This is a very important ingredient in our
treatment of mean convex flows.

\begin{figure}[h!] 
\begin{center}  
\includegraphics[scale=1]{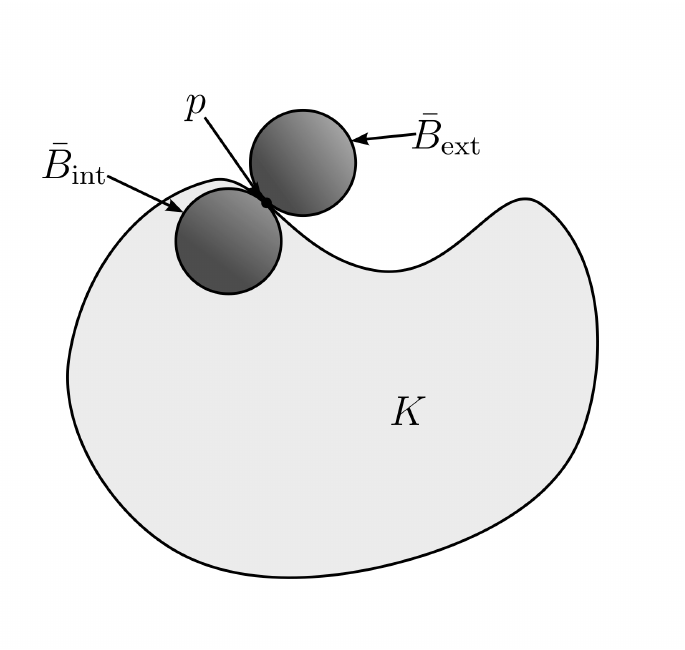} 
\caption{\label{fig-1}}
\end{center}
\end{figure}

\bigskip
\begin{definition}[Andrews condition]
\label{def-al_andrews}
If $K\subseteq\R^N$ is a smooth, closed,  mean convex domain 
(possibly disconnected) and $\al>0$, 
then $K$ satisfies the {\em $\al$-Andrews condition} if
for every $p\in \partial K$ there are closed balls $\bar{B}_{\Int}\subseteq K$ and 
$\bar{B}_{\Ext}\subseteq\R^N\setminus \Int({K})$ of radius at least $\frac{\al}{H(p)}$ that are tangent to $\D K$ at $p$ from the interior and exterior of 
$K$, respectively, as in Figure \ref{fig-1} (in the limiting case $H(p)=0$ this means that $K$ is a halfspace). 
A smooth mean curvature flow $\{K_t\subseteq \R^N\}_{t\in I}$ is 
{\em $\al$-Andrews} if every time slice satisfies the $\al$-Andrews condition.\footnote{More generally, a smooth $\alpha$-Andrews flow $\{K_t\subseteq U\}_{t\in I}$ in an open set $U\subseteq \R^N$
is a smooth mean convex flow in $U$,
such that for every $p\in\D K_t$ the balls tangent at $p$ with radius $\frac{\al}{H(p)}$  satisfy $\bar{B}_{\Int}\cap U\subseteq K_t$ respectively $\bar{B}_{\Ext}\cap U\subseteq U\setminus\Int(K_t)$.}
\end{definition}

Note that every compact, smooth, strictly mean convex domain is $\al$-Andrews for
some $\al>0$.  
The main theorem of Andrews \cite{andrews1} is:

\begin{theorem}
If  the initial condition $K_0$ of a smooth compact
mean curvature flow $\{K_t\}$ is $\al$-Andrews, then so is the whole flow $\{K_t\}_{t\geq 0}$.
\end{theorem}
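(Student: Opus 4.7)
The plan is to reformulate the $\alpha$-Andrews interior and exterior ball conditions as pointwise inequalities for a two-point quantity on $\partial K_t\times\partial K_t$, and then run a maximum principle argument at a hypothetical first time of failure.

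For the interior ball condition, I would introduce
\[
 Z(p,q,t)=\frac{2\langle p-q,\nu(p,t)\rangle}{|p-q|^2},\qquad p\neq q\in M_t=\partial K_t,
\]
and note that $Z(p,q,t)$ is the reciprocal of (half) the radius of the unique sphere through $q$ that is tangent to $M_t$ at $p$ from the interior side. Consequently, the existence of an interior tangent ball of radius $\alpha/H(p,t)$ at every $p$ is equivalent to $\alpha Z(p,q,t)\leq H(p,t)$ for all $p\neq q$ on $M_t$. The exterior condition becomes $\alpha Z(p,q,t)\geq -H(p,t)$ after flipping the sign in the definition, and is handled symmetrically. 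In the on-diagonal limit $q\to p$, $Z$ tends to $\langle A(p,t)v,v\rangle$ for the unit direction $v=\lim(p-q)/|p-q|$, so $\sup_{q}Z(p,q,t)$ is exactly the largest principal curvature of $M_t$ at $p$.

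Assume now that the $\alpha$-Andrews condition holds initially but fails at some later time, and let $t_0>0$ be the first such time. By compactness and the continuity of $Z$ and $H$, there exist $(p_0,q_0)\in M_{t_0}\times M_{t_0}$ at which $\alpha Z(p_0,q_0,t_0)=H(p_0,t_0)$. I would consider two cases. If $q_0=p_0$, then the diagonal limit identifies this equality with $\alpha \lambda_{\max}(p_0,t_0)=H(p_0,t_0)$; since $H=\lambda_1+\ldots+\lambda_{N-1}$ and all principal curvatures are $\leq \lambda_{\max}$, this forces $\lambda_{\max}\leq 0$ unless $\alpha\leq 1$, and a short argument (also using strict mean convexity, obtained from the initial hypothesis by a standard perturbation) shows this situation cannot first occur at $t_0>0$. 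The substantive case is $q_0\neq p_0$, where I would run the genuine two-point maximum principle.

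At such a first interior touching $(p_0,q_0,t_0)$, the function $F(p,q,t)=\alpha Z(p,q,t)-H(p,t)$ attains a space-maximum value $0$. I would compute $\partial_t F$, the spatial gradients $\nabla_p F, \nabla_q F$, and the spatial Hessian of $F$ along $\partial K_t\times \partial K_t$, using the mean curvature flow equations
\[
 \partial_t \nu=\nabla H,\qquad \partial_t H=\Delta H+|A|^2 H.
\]
The vanishing of $\nabla_q F$ at the maximum determines how the direction $p-q$ relates to the normal $\nu(q)$, which in turn couples the geometry at $p$ and $q$ in a rigid way. Combined with $\nabla_p F=0$, one obtains explicit expressions for the tangential gradient of $Z$ at $p$, and the Hessian inequality $\Delta_p Z+\Delta_q Z\leq 0$ gives enough information to control the reaction terms. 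The main obstacle, as usual in Andrews-type arguments, is to show that the surviving reaction terms in $(\partial_t-\Delta_p-\Delta_q)F$ at the critical point have the correct sign: one must verify that $(\partial_t-\Delta_p-\Delta_q)(\alpha Z)\leq |A(p)|^2 H(p)$ up to terms that are killed by the gradient conditions, so that the strong maximum principle applied to $F$ forces $F<0$ for $t$ slightly larger than $0$, contradicting the choice of $t_0$. The exterior ball statement follows by repeating the argument with the sign of $Z$ reversed.
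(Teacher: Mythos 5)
The paper does \emph{not} prove this statement: it is cited directly as the main theorem of Andrews \cite{andrews1}. The closely related two-point computation that the paper does carry out (Theorem~\ref{thm_ell_reg}, preservation of the Andrews quantity along the elliptic approximators $N_0^\eps$) delegates the work to the viscosity differential inequalities for $Z^\ast/H$ and $Z_\ast/H$ from Andrews--Langford--McCoy \cite{Andrews_Langford_McCoy}. Your proposal is a reconstruction of Andrews' original parabolic two-point argument, which is indeed the right skeleton; the set-up (definition of $Z$, equivalence of $\alpha Z(p,q,t)\le H(p,t)$ for all $q\ne p$ with the interior ball condition, diagonal limit recovering $\lambda_{\max}$) is correct.

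There is, however, a genuine error in your diagonal case. You assert that $\alpha\,\lambda_{\max}(p_0,t_0)=H(p_0,t_0)$ ``forces $\lambda_{\max}\le 0$ unless $\alpha\le 1$.'' The arithmetic $H\le (N-1)\lambda_{\max}$ (valid when $\lambda_{\max}>0$) gives only $\alpha\le N-1$; and more importantly, a diagonal touching with $\lambda_{\max}>0$ is perfectly consistent: a round shrinking sphere saturates $\alpha\,\lambda_{\max}=H$ with $\alpha=N-1$ at every point for all time, and nothing goes wrong. So the diagonal case cannot be dismissed on these grounds. What is actually true is that the diagonal inequality $\alpha\,\lambda_{\max}\le H$, i.e.\ nonnegativity of $\tfrac1\alpha Hg-A$, is a preserved curvature condition: this tensor satisfies the same reaction--diffusion equation $\partial_t=\Delta+|A|^2\cdot$ as $A$ and $H$ (cf.\ equations \eqref{app_eqnH}--\eqref{app_eqnA}), nonnegativity is a fiberwise-convex cone invariant under the reaction ODE, and Hamilton's tensor maximum principle applies exactly as in Appendix~\ref{app_preserved_curv}. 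The diagonal case is therefore \emph{handled} by the tensor maximum principle, not ruled out.

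The off-diagonal two-point computation is the substantive content of Andrews' theorem, and your outline defers it entirely (``the main obstacle \dots\ is to show that the surviving reaction terms \dots\ have the correct sign''); as written the proposal asserts the computation can be done but does not do it. Two smaller issues: $Z$ is the reciprocal of the radius of the tangent sphere, not of half the radius; and the ``first time of failure'' should be set up with some care, since the set of times at which the closed condition $\sup(\alpha Z-H)\le 0$ holds is itself closed. One typically subtracts a small exponentially growing slack $\delta e^{Ct}$ so that the first touching is strict before applying the maximum principle.
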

The Andrews condition strongly restricts the kind of singularities that can form
under the flow; in particular,
it immediately excludes multiplicity two planes and grim reapers as 
potential blowup limits. 

In order to fully exploit the Andrews condition, we first show that it persists after the formation of singularities.  To this end, we consider the following notion of mean  curvature for nonsmooth sets:

\begin{definition}[cf. \protect{\cite[Def. 7]{kleiner_isoperimetric_comparison}}]
\label{def_viscosity_mean_curvature}
Let $K\subseteq \R^N$ be a closed set.   If $p\in \D K$, then the {\em viscosity mean curvature of $K$ at $p$} is
$$
H(p)=\inf\{H_{\D X}(p)\mid X\subseteq K\;\text{is a compact smooth domain,}
\;  p\in \D X\},\\
$$
where $H_{\D X}(p)$ denotes the mean curvature of $\D X$ at $p$ with respect to the inward pointing normal.
By the usual convention, the infimum of the empty set is $\infty$.
\end{definition}

Using viscosity mean curvature, we may then define the $\al$-Andrews condition 
for closed sets and level set flows:
\begin{definition}
A closed set $K$ satisfies the  {\em viscosity $\al$-Andrews
condition} if $H(p)\in [0,\infty]$ at every boundary point, and there are interior and exterior balls $\bar B_{\Int}$,
$\bar B_{\Ext}$ passing through $p$,  as in Definition \ref{def-al_andrews}.
A level set flow $\{K_t\}$ is {\em $\al$-Andrews} if every time slice satisfies
the viscosity $\al$-Andrews condition.
\end{definition}

Using these definitions and elliptic regularization we can extend Andrews'
theorem to the nonsmooth setting:
\begin{theorem}
If $\{K_t\}$ is a compact level set flow with smooth $\al$-Andrews initial condition, then
$\{K_t\}$ is an $\al$-Andrews level set flow. 
\end{theorem}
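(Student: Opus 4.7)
The plan is to approximate the level set flow $\{K_t\}$ by smooth mean curvature flows to which Andrews' smooth theorem applies, and then pass the $\al$-Andrews condition to the limit using the definition of viscosity mean curvature.

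First I would invoke Ilmanen's elliptic regularization to produce a family of smooth mean convex mean curvature flows $\{K^\eps_t\}_{t\geq 0}$ in $\R^N$ (obtained as $z$-slices of translating solitons in $\R^{N+1}$) such that the initial data $K^\eps_0$ converge smoothly to $K_0$ in a neighborhood of $\partial K_0$, and the spacetime tracks converge in the local Hausdorff topology to $\{K_t\}$ as $\eps\to 0$. Since $K_0$ is smooth and $\al$-Andrews and since $K^\eps_0\to K_0$ smoothly, for any $\delta>0$ and all sufficiently small $\eps$ the approximation $K^\eps_0$ satisfies the $(\al-\delta)$-Andrews condition. Andrews' smooth theorem (stated earlier in the excerpt) then yields that $\{K^\eps_t\}_{t\geq 0}$ is $(\al-\delta)$-Andrews for every $t\geq 0$.

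Next I would pass to the limit. Fix $t\geq 0$ and $p\in \partial K_t$, and pick $p_\eps\in \partial K^\eps_t$ with $p_\eps\to p$. By the $(\al-\delta)$-Andrews condition at $p_\eps$, there exist interior and exterior balls of radius $r_\eps = \frac{\al-\delta}{H(p_\eps)}$ tangent to $\partial K^\eps_t$ at $p_\eps$. After passing to a subsequence, these balls Hausdorff-converge to (possibly degenerate) closed balls $\bar{B}_\Int$ and $\bar{B}_\Ext$ through $p$, which lie in $K_t$ and $\R^N\setminus\Int(K_t)$ respectively, by Hausdorff convergence of $K^\eps_t$ to $K_t$. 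To bound the limit radius from below, let $X\subseteq K_t$ be any smooth compact domain with $p\in\partial X$ from the inside. A local perturbation argument produces smooth domains $X_\eps\subseteq K^\eps_t$ that converge smoothly to $X$ and touch $\partial K^\eps_t$ at points $q_\eps\to p$; the first-touching form of the maximum principle then yields
$$ H(q_\eps)\leq H_{\partial X_\eps}(q_\eps)\to H_{\partial X}(p), $$
so $\liminf_\eps r_\eps\geq \frac{\al-\delta}{H_{\partial X}(p)}$. Taking the infimum over all such $X$ and sending $\delta\to 0$ gives limiting balls of radius at least $\frac{\al}{H(p)}$, which is exactly the viscosity $\al$-Andrews condition.

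The main obstacle will be the construction of the supporting domains $X_\eps\subseteq K^\eps_t$ tangent to $\partial K^\eps_t$ near $p$. Since neither $K^\eps_t\subseteq K_t$ nor $K_t\subseteq K^\eps_t$ holds in general, one cannot simply take $X_\eps=X$; one must first slightly retract $X$ into the interior of $K_t$ away from $p$, then use the local smooth proximity of $\partial K^\eps_t$ to $\partial K_t$ near $p$ to translate the retracted domain until it lies inside $K^\eps_t$ with a first-touching point converging to $p$. This retraction introduces an $o(1)$ error in the mean curvature comparison that is absorbed into the $\delta\to 0$ limit.
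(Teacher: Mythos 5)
The approach---elliptic regularization followed by a limiting argument---is the same strategy the paper uses (Theorem \ref{thm_ell_reg}), but the middle step of your argument contains a genuine error that the paper works hard to avoid.

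Your claim that Ilmanen's elliptic regularization produces a family of \emph{smooth compact mean curvature flows} $\{K^\eps_t\}_{t\geq 0}$ in $\R^N$ (as $z$-slices of translating solitons) is false. The $z$-slices of the translator $N^\eps_t=\mathrm{graph}\bigl(\tfrac{u_\eps-t}{\eps}\bigr)$ are the level sets $\{u_\eps=\mathrm{const}\}$, and these satisfy the \emph{regularized} equation $\Div\bigl(\tfrac{Du_\eps}{\sqrt{\eps^2+|Du_\eps|^2}}\bigr)=-\tfrac{1}{\sqrt{\eps^2+|Du_\eps|^2}}$ rather than the mean curvature flow equation; indeed no family of smooth compact flows in $\R^N$ can approximate a level set flow past its first singular time, since smooth compact MCF cannot be continued through a singularity. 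The smooth approximators one actually has are the domains $L^\eps_t\subset\R^{N+1}$ bounded by the translating solitons, which converge to the \emph{stabilized} flow $\{K_t\times\R\}$, and which are hypersurfaces \emph{with boundary}. For these, Andrews' smooth theorem---stated for compact flows without boundary, whose proof is a maximum principle in space and time over the whole closed flow---does not apply. This is precisely the obstacle that forces the paper to replace Andrews' theorem with the Andrews--Langford--McCoy differential inequalities for $Z^\ast/H$ and $Z_\ast/H$: these are elliptic inequalities on the fixed translator $N^\eps_0$, and the maximum principle there says the optimal Andrews constant of the translator is attained at its boundary $\partial N^\eps_0$, where it converges to $\alpha$ because the translator near its boundary converges to $\partial K_0\times\R$. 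Without this replacement, the claim that $\{L^\eps_t\}$ is $(\al-\delta)$-Andrews has no justification.

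The second half of your proposal---passing interior/exterior tangent balls to the limit and comparing with test domains $X\subseteq K_t$ to recover the viscosity Andrews condition---is sound in spirit, and corresponds to the strong Hausdorff convergence discussion in Section \ref{subsec_strongHD} and Corollary \ref{properties_of_alpha_andrews}(3) of the paper, although the paper handles it more efficiently by showing once and for all that the Andrews balls persist under strong Hausdorff limits. You should also be aware that the limit of the $L^\eps_t$'s is $K_t\times\R$, not $K_t$, so one additional (easy but necessary) step is to check that the $\al$-Andrews condition for $K_t\times\R$ is equivalent to the $\al$-Andrews condition for $K_t$.
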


At first sight, such $\al$-Andrews level set flows would
provide a natural setting for the treatment of mean convex mean curvature
flow.  However, since blow-up arguments involve passage to weak 
limits, we decided to work instead with the following larger class of flows.

\begin{definition}\label{def_andrews_flows}
 The class of {\em  $\al$-Andrews flows} is the smallest class of set 
flows which contains all compact $\al$-Andrews level set 
flows with smooth initial condition and all smooth $\al$-Andrews flows $\{K_t\subseteq U\}_{t\in I}$ in any open set $U\subseteq \R^N$, and which is closed under the operations of
restriction to open sets, restriction to smaller time
intervals, parabolic rescaling, and passage to Hausdorff limits.
\end{definition}

General $\al$-Andrews flows share some basic properties with $\al$-Andrews level set flows; in particular, two-sided avoidance, monotonicity, the viscosity Andrews condition, and $\eps$-regularity, see Corollary \ref{properties_of_alpha_andrews}.

In the present paper, we follow the philosophy of approximation by smooth flows, an idea
that has already been emphasized by Evans-Spruck \cite[Section 7]{evans-spruck} and White 
\cite{white_size,white_regularity,white_subsequent}.
Namely, in Theorem \ref{thm_ell_reg} we prove that any $\al$-Andrews flow $\{K_t\}$ (respectively its stabilized version $\{K_t\times \R\}$) can be approximated by smooth flows satisfying the Andrews condition.
This has the pleasant implication that we can implement almost the entire theory of mean convex flows
-- apart from the approximation argument 
itself -- in the setting of smooth flows.
We hope this makes the smoothly inclined readers feel comfortable; for a more axiomatic treatment, see Remark \ref{axiom_remark}.

\begin{remark}
In particular, our results apply to mean curvature flows in a spacetime region 
$\mathcal{U} = U \times [t_1,t_2]$, provided one can control the Andrews quantity 
along the parabolic boundary
$\D\mathcal{U}=(U \times \{t_1\})  \cup  (\partial U \times [t_1,t_2])$, 
and $H>0$ on $\D\mathcal{U}$. This could be useful for the study of generic mean curvature 
flow, introduced by Colding-Minicozzi \cite{CM_generic}.
\end{remark}

\subsection{The key local estimates}
We note that the mean curvature appearing below
refers to the viscosity mean curvature; at smooth points this of course coincides with the usual mean
curvature.  

Our first main estimate gives curvature control on a whole parabolic ball, from a mean 
curvature bound at a single point.

\begin{theorem}[Curvature estimate]\label{thm-intro_local_curvature_bounds}
For all $\al>0$ there exist $\rho=\rho(\al)>0$ and $C_\ell=C_\ell(\al)<\infty$
$(\ell=0,1,2,\ldots)$ with the following property.
If $\K$ is an $\al$-Andrews flow in a parabolic ball $P(p,t,r)$ centered at a boundary point 
$p\in \D K_t$ with   $H(p,t)\leq r^{-1}$, then  $\K$ is smooth in the parabolic ball
$P(p,t,\rho r)$, and
\begin{equation}\label{eqn-intro_curvature_estimate}
 \sup_{P(p,t,\rho r)}\abs{\nabla^\ell A}\leq C_\ell r^{-(\ell+1)}\, .
\end{equation}
\end{theorem}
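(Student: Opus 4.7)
The plan is to argue by contradiction via a Hamilton-style point-picking and blow-up scheme, using the fact that the class of $\al$-Andrews flows is closed under parabolic rescaling and Hausdorff limits (Definition \ref{def_andrews_flows}), and to derive the contradiction from a rigidity theorem for limit Andrews flows. First I would rescale to normalize $r=1$ and reduce to the case $\ell=0$: once a bound $|A|\leq C$ is in hand on a smaller parabolic ball, smoothness there follows from the smooth-approximation framework (Theorem \ref{thm_ell_reg}), and the higher-derivative bounds on an even smaller ball come from the standard Ecker-Huisken interior regularity for smooth mean curvature flow. I would also record a direct consequence of the Andrews condition at regular boundary points, namely the pointwise bound $|A|\leq \sqrt{N-1}\,H/\al$; combined with the hypothesis this already forces $|A|(p,t)\leq C(\al)$ at the center point itself.

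Assuming the $\ell=0$ bound fails, I extract a sequence of $\al$-Andrews flows $\K^i$ on $P(p_i,t_i,1)$ with $H(p_i,t_i)\leq 1$ but $\sup_{P(p_i,t_i,\rho_i)}|A|\geq C_i$ for some $\rho_i\to 0$, $C_i\to\infty$. The next step is a Hamilton-style point-picking lemma, using a weight roughly of the form $|A|(X)(\rho_i-d_{\mathrm{parab}}(X,p_i))_+$, to produce centers $Y_i=(y_i,s_i)$ with $Q_i:=|A|(Y_i)\to\infty$ and $|A|\leq 2Q_i$ on the parabolic ball $P(Y_i,L_i/Q_i)$ for some $L_i\to\infty$, calibrated so that the rescaled base points $\tilde p_i$ stay at bounded parabolic distance from the origin after rescaling by $Q_i$ at $Y_i$. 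Parabolic rescaling preserves the $\al$-Andrews property, and combining the uniform local curvature bound with the smooth-approximation framework yields subsequential smooth convergence on compact subsets to a smooth ancient $\al$-Andrews flow $\tilde\K^\infty$ in $\R^{N,1}$ with $|A|\leq 2$ everywhere and $|A|(0,0)=1$, together with a marked limit point $\tilde p_\infty$.

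At $\tilde p_\infty$ the viscosity mean curvature satisfies $H(\tilde p_\infty)\leq \lim_{i\to\infty} Q_i^{-1}=0$, so the viscosity Andrews condition produces interior and exterior tangent balls of infinite radius through $\tilde p_\infty$, meaning that $\tilde\K^\infty$ contains a static half-space passing through $\tilde p_\infty$. Invoking the half-space rigidity theorem for $\al$-Andrews flows (proved earlier in Section \ref{sec-halfspace_and_consequences}) forces $\tilde\K^\infty$ to coincide with that static half-space, yielding $|A|\equiv 0$ on $\tilde\K^\infty$ and contradicting $|A|(0,0)=1$. The main technical obstacle will be calibrating the point-picking so that $\tilde p_i$ indeed stays in a bounded region of spacetime; the pointwise a priori bound $|A|(p_i)\leq C(\al)$ supplied by the Andrews condition is the key input that makes this calibration possible, since it separates the high-curvature point $Y_i$ from the center $p_i$ by a controllable parabolic distance. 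All remaining ingredients — compactness of the class, smooth convergence under uniformly bounded curvature, and the half-space rigidity — will be at hand from the preceding sections.
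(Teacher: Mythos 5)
Your overall strategy (contradiction, parabolic rescaling, reduce $\ell\geq 1$ to $\ell=0$ via interior estimates, then kill the rescaled limit with the halfspace result from Section \ref{sec-halfspace_and_consequences}) is the right one and is the same family of argument as the paper's. However, the paper's proof is considerably more direct, and your version has a genuine gap precisely at the step you flag as ``the main technical obstacle.''

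The key difference: the paper links $\rho'$ and $C_0$ inversely in the statement to be contradicted (showing first there is $\rho'>0$ with $|A|\leq 1/\rho'$ on $P(p,t,\rho' r)$), so the negated sequence satisfies $\sup_{P(p_j,t_j,j^{-1}r_j)}|A|\geq jr_j^{-1}$. Rescaling by $(j^{-1}r_j)^{-1}$ \emph{at the original center $p_j$} then gives $\sup_{P(0,0,1)}|A|\geq 1$ while simultaneously $H(0,0)\leq j^{-1}\to 0$ and the flow is defined (and admissible) in $P(0,0,j)$. These are exactly the hypotheses of the halfspace convergence theorem (Theorem \ref{thm-halfspace_smooth_convergence}), which forces the rescaled flows to converge \emph{smoothly} to a static halfspace, making $\sup_{P(0,0,1)}|A|\to 0$; contradiction. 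No point-picking is needed, and one never has to track a marked point.

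Your version negates with $\rho_i\to 0$ and $C_i\to\infty$ independently and therefore must point-pick, rescaling at a high-curvature point $Y_i$ by $Q_i=|A|(Y_i)$. But then the rescaled base point $\tilde p_i$ sits at parabolic distance $\sim Q_i\,d(Y_i,p_i)\lesssim Q_i\rho_i$ from the origin, and nothing controls $Q_i\rho_i$. The claim that the a priori bound $|A|(p_i)\leq\sqrt{N-1}/\al$ ``makes this calibration possible'' does not hold up: a pointwise bound on $|A|(p_i)$ does not prevent the curvature from climbing to $Q_i$ over a region whose rescaled diameter blows up, since gradient estimates for $A$ along the way are not available (that is essentially what is being proved). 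If $\tilde p_i\to\infty$, then in the limit you have an ancient $\al$-Andrews flow with $|A|\leq 2$ and $|A|(0,0)=1$, which need not be a halfspace (shrinking cylinders, bowl solitons, etc.\ all qualify), and you have no contradiction. There is also a secondary circularity risk: using the halfspace convergence result \emph{without} the admissibility hypothesis (1') on the limit, or using convexity of ancient $\al$-Andrews flows (Corollary \ref{cor-intro_ancient}), relies on the very curvature estimate you are proving; the paper carefully works with the admissible version (Theorem 1.8') and removes admissibility only afterwards in Appendix \ref{app_remove_admissibility}.

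To repair your scheme you would either have to arrange $\rho_iC_i=O(1)$ from the start (at which point the paper's direct rescaling at $p_j$ is simpler and makes the point-picking superfluous), or find an independent argument that $Q_i\rho_i$ stays bounded, which does not seem available at this stage of the theory.
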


Theorem \ref{thm-intro_local_curvature_bounds} implies in particular that
boundary points with finite mean curvature are regular points of the flow.
It also ensures that sequences of $\al$-Andrews flows have subsequences that
converge locally to smooth mean curvature flows, provided we normalize the 
mean curvature at a single point.
In particular, it recovers the gradient estimate of Huisken-Sinestrari \cite[Thm. 1.6]{huisken-sinestrari3}, see Corollary \ref{cor_HS_gradient_estimate}.

Our next result is the crucial convexity estimate.

\begin{theorem}[Convexity estimate]\label{thm-intro_convexity_estimate}
For all $\eps>0$, $\al >0$, there exists $\eta=\eta(\eps,\al)<\infty$ with the following property.
If $\K$ is an $\al $-Andrews flow in a parabolic ball  $P(p,t,\eta\, r)$ centered at a boundary point 
$p\in \D K_t$ with $H(p,t)\leq r^{-1}$, 
then
\begin{equation}
\la_1(p,t)\geq -\eps r^{-1}.
\end{equation}  
\end{theorem}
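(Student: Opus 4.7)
The plan is to proceed by contradiction via blow-up, reducing Theorem \ref{thm-intro_convexity_estimate} to a rigidity statement for ancient $\al$-Andrews flows: namely, every smooth ancient $\al$-Andrews flow is weakly convex ($\la_1\geq 0$). Granting this rigidity, the estimate then follows from a compactness argument built on the curvature estimate of Theorem \ref{thm-intro_local_curvature_bounds} and the closure of the class of $\al$-Andrews flows under Hausdorff limits.

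Suppose the conclusion fails. Then there exist $\eps,\al>0$, a sequence $\eta_n\to\infty$, and $\al$-Andrews flows $\K^n$ on parabolic balls $P(p_n,t_n,\eta_n r_n)$ with $H(p_n,t_n)\leq r_n^{-1}$ but $\la_1(p_n,t_n)<-\eps r_n^{-1}$. Translating $(p_n,t_n)$ to the spacetime origin and parabolically rescaling by $r_n^{-1}$, we may assume $r_n=1$, producing $\al$-Andrews flows on $P(0,0,\eta_n)$ with $H(0,0)\leq 1$ and $\la_1(0,0)<-\eps$. The curvature estimate yields uniform $C^\infty$ bounds on a definite parabolic neighborhood of $(0,0)$, and iterating it at points near the boundary of the smooth region enlarges that region while keeping all derivatives of $A$ locally uniformly bounded. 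Since the class of $\al$-Andrews flows is closed under Hausdorff limits by Definition \ref{def_andrews_flows}, a subsequence converges smoothly to a smooth $\al$-Andrews ancient flow $\K^\infty$ on $\R^N\times(-\infty,0]$ with $H^\infty(0,0)\leq 1$ and $\la_1^\infty(0,0)\leq -\eps<0$.

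The remaining --- and main --- obstacle is the rigidity: on any smooth ancient $\al$-Andrews flow, $\la_1\geq 0$ everywhere. Here the leverage comes from the Andrews condition itself, which at every boundary point $(p,t)$ provides an interior ball tangent at $p$ of radius $\al/H(p,t)$. In an ancient flow such interior balls are available at arbitrarily negative times, and two-sided avoidance against smooth shrinking spheres propagates them backward with growing radius $\sqrt{R^2+2(N-1)T}$; pushing to the remote past forces the interior region to engulf arbitrarily large balls. After a further parabolic rescaling normalizing the mean curvature, the Andrews balls produce a static halfspace as a limit flow. A parabolic maximum principle applied to $\la_1/H$ --- whose evolution equation on a smooth mean convex flow carries a favorable reaction term, so that a negative infimum on an ancient flow must be attained and propagates to a tangent flow at infinity incompatible with the halfspace just produced --- then yields $\la_1^\infty\geq 0$ throughout $\K^\infty$, contradicting $\la_1^\infty(0,0)\leq -\eps<0$. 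The technical crux of the whole argument is executing this rigidity cleanly: carefully combining the backward propagation of Andrews balls with the extraction of a halfspace limit flow and the maximum principle for $\la_1/H$ on ancient flows.
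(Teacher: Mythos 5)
Your strategy has a genuine circularity problem and a gap in the rigidity step.

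The central difficulty is your claim to extract a \emph{global} smooth ancient limit $\K^\infty$ on $\R^N\times(-\infty,0]$ by ``iterating'' the curvature estimate. This iteration does not work: the curvature estimate at a point with $H\leq r^{-1}$ gives $|A|\leq C_0 r^{-1}$ on a parabolic ball of radius $\rho r$, so the bound on $H$ degrades by a factor $\sim C_0$ at each step while the domain only expands by a shrinking amount $\sim \rho/H_k$; the geometric series converges to a finite radius and you never reach all of $\R^N$. The tool that \emph{would} give a global limit is Theorem \ref{thm-intro_h_gives_global_convergence}, but its proof (Step 1) explicitly invokes Theorem \ref{thm-intro_convexity_estimate} to know the local limit is convex before the tangent-cone and outward-propagation arguments of Steps 2--5 can run. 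Using that theorem here would be circular. Beyond this, your rigidity argument for ancient $\al$-Andrews flows leans on the assertion that ``a negative infimum on an ancient flow must be attained'' -- on a noncompact spacetime this is not automatic, and you give no mechanism for producing a point where the infimum is achieved.

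The paper's proof sidesteps both issues by keeping the argument entirely local. One sets $\eps_0$ to be the \emph{infimum} of all $\eps$ for which the estimate holds (noting $\eps_0\leq 1/\al$ by the Andrews condition), assumes $\eps_0>0$, and chooses a contradicting sequence $\K^j$ with $\la_1(0,0)\to -\eps_0$. Only the local curvature estimate (Theorem \ref{thm-intro_local_curvature_bounds}) is used to pass to a smooth limit $\K^\infty$ in the single small parabolic ball $P(0,0,\rho)$. The key observation -- and the reason the infimum definition of $\eps_0$ is so effective -- is that \emph{every} boundary point of $\K^\infty$ in $P(0,0,r)$ is a limit of points of $\K^j$ defined on arbitrarily large parabolic balls, so $\la_1/H\geq -\eps$ there for every $\eps>\eps_0$, hence $\la_1/H\geq -\eps_0$ throughout $P(0,0,r)$ with equality at $(0,0)$. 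Now the minimum is attained by construction, with no compactness issue. The equality case of the maximum principle then forces a local product splitting, so the Gauss curvature $\la_1\la_{N-1}$ must vanish; but $\la_1<0$ and $\la_{N-1}>0$ (since $H>0$) make $\la_1\la_{N-1}<0$, a contradiction. This is three paragraphs of argument versus your more elaborate (and, as it stands, circular) global blow-up.
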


Theorem \ref{thm-intro_convexity_estimate} says that a boundary point $(p,t)$ in an
$\al$-Andrews flow has  almost positive definite
second fundamental form as long as the flow has had a chance to evolve
over a portion of spacetime which is 
large compared with the scale given by $H(p,t)$.
In particular, it recovers the convexity estimates of Huisken-Sinestrari \cite[Thm. 1.4]{huisken-sinestrari3} and White \cite[Thm. 8]{white_nature}, see Corollary \ref{cor-intro_convexity_estimates} and Corollary \ref{cor-intro_limit_flows_convex}.
It also implies that ancient globally defined $\al$-Andrews flows are convex, see Corollary \ref{cor-intro_ancient}. The convexity estimate, like the Hamilton-Ivey estimate in the Ricci flow \cite{Hamilton_survey}, plays a key role in developing the global theory.

\subsection{Global theory}
We start our treatment of the global theory by proving the following global convergence theorem.

\begin{theorem}[Global convergence]\label{thm-intro_h_gives_global_convergence}
Let $\{\K^j\}$ be a sequence of $\al $-Andrews flows, $\{(p_j,t_j)\in \D \K^j\}$
be a sequence of boundary points, and $\{r_j\}\subset (0,\infty)$ be a 
sequence of scales.
Suppose that for all $\eta<\infty$, for $j$ sufficiently large
 $\K^j$ is defined in $P(p_j,t_j,\eta \,r_j)$, and $H(p_j,t_j)\leq r_j^{-1}$.

Then, after passing to a subsequence, the flow $\hat \K^j$ obtained from $\K^j$
by the rescaling $(p,t)\mapsto (r_j^{-1}(p-p_j),r_j^{-2}(t-t_j))$ converges smoothly:
\begin{align}
\qquad\qquad\qquad \hat \K^j\ra\hat\K^\infty \qquad\qquad\qquad C^\infty_{\textrm{loc}} \,\, \textrm{on}\,\, \R^N\times(-\infty,0].
\end{align}
The limit $\hat\K^\infty$ is an $\al$-Andrews flow with convex time slices.
\end{theorem}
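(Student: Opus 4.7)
My plan is to combine the curvature estimate (Theorem~\ref{thm-intro_local_curvature_bounds}), the convexity estimate (Theorem~\ref{thm-intro_convexity_estimate}), and the Hausdorff-limit closure built into Definition~\ref{def_andrews_flows}. After the stated parabolic rescaling, the $\hat\K^j$ are $\al$-Andrews flows satisfying $\hat H^j(0,0)\leq 1$, and for each fixed $\eta<\infty$ the flow $\hat\K^j$ is defined in $P(0,0,\eta)$ once $j$ is large enough. The first move is to apply Theorem~\ref{thm-intro_local_curvature_bounds} at $(0,0)$ with $r=1$: this yields smoothness on $P(0,0,\rho)$ together with the uniform bounds $|\hat\nabla^\ell \hat A^j|\leq C_\ell$. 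Arzel\`a--Ascoli and a diagonal subsequence then extract a limit $\hat\K^\infty$ with $\hat\K^j\to\hat\K^\infty$ in $C^\infty$ on $P(0,0,\rho)$, and $\hat\K^\infty$ is itself $\al$-Andrews by the Hausdorff-limit clause of Definition~\ref{def_andrews_flows}.

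Next I would iterate Theorem~\ref{thm-intro_local_curvature_bounds} at further boundary points to upgrade this to $C^\infty_{\loc}$ convergence on all of $\R^N\times(-\infty,0]$. The idea is that on any spacetime region $\Omega$ on which uniform curvature bounds have already been secured, every boundary point $(q,s)\in\Omega$ satisfies a local bound $\hat H^j(q,s)\leq H_\Omega$; applying Theorem~\ref{thm-intro_local_curvature_bounds} at $(q,s)$ with $r=H_\Omega^{-1}$ then gives smoothness with uniform derivative bounds on $P(q,s,\rho H_\Omega^{-1})$. A finite cover at each stage, followed by a diagonal argument over a compact exhaustion of $\R^N\times(-\infty,0]$, yields the desired global smooth convergence. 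For convexity of the time slices of $\hat\K^\infty$, fix any boundary point $(q,s)$ and let $r=1/\hat H^\infty(q,s)$; since $\hat\K^j$ is defined on $P(q_j,s_j,\eta r)$ for arbitrary $\eta<\infty$ when $j$ is large, Theorem~\ref{thm-intro_convexity_estimate} applied at $(q_j,s_j)$ gives $\hat\lambda_1^j(q_j,s_j)\geq -\eps r^{-1}$ for any fixed $\eps>0$, and passing to the smooth limit and then letting $\eps\to 0$ produces $\hat\lambda_1^\infty(q,s)\geq 0$.

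The main obstacle is the propagation step: a naive iteration of Theorem~\ref{thm-intro_local_curvature_bounds} at boundary points where $|\hat A^j|\leq C_0$ extends the smooth region by only $\rho/C_0$, and the curvature bound at the next iteration worsens multiplicatively, so the enlargements form only a summable geometric progression. Overcoming this will require a contradiction / point-picking argument: were there a sequence $(q_j,s_j)\in P(0,0,R)$ with $\hat H^j(q_j,s_j)\to\infty$, a secondary parabolic rescaling at $(q_j,s_j)$ by $\hat H^j(q_j,s_j)$ would produce a new sequence of $\al$-Andrews flows to which the first step's argument applies; the resulting smooth ancient limit inherits from the normalization $\hat H^j(0,0)\leq 1$ a boundary point with vanishing mean curvature escaping to spatial infinity, and this, combined with the Andrews ball geometry and the convexity estimate applied to the secondary sequence, should force the required contradiction.
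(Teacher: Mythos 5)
Your first paragraph is sound: applying Theorem~\ref{thm-intro_local_curvature_bounds} at $(0,0)$ with $r=1$ gives uniform derivative bounds on $P(0,0,\rho)$ and hence a smooth local limit, and the paper starts the same way. You also correctly identify the central obstacle, namely that a naive iteration of the curvature estimate only advances by a summable geometric progression, and you correctly propose a contradiction/point-picking scheme (the paper explicitly models this on Perelman's canonical neighborhood argument). The paragraph on $\la_1\geq 0$ via Theorem~\ref{thm-intro_convexity_estimate} is also fine once smooth convergence is in hand.

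However, the contradiction argument you sketch does not actually close. You pick $(q_j,s_j)\in P(0,0,R)$ with $\hat H^j(q_j,s_j)\to\infty$, do a secondary parabolic rescaling to normalize $\hat H^j(q_j,s_j)=1$, and observe that the original basepoint then has mean curvature tending to $0$ and escapes to spatial infinity. But this is \emph{not} contradictory: any noncompact ancient convex flow (shrinking cylinders, translating bowls, static halfspaces, etc.) has boundary points at arbitrarily large distance with arbitrarily small mean curvature, so ``a low-curvature point escaping to infinity'' plus ``the secondary limit is convex'' yields nothing. The paper's actual mechanism is different and uses more structure. Defining $R_0$ as the supremal radius at which curvature bounds hold, one passes to a smooth convex limit $X\subset B(0,R_0)$; then for $q\in\overline{\D X}\cap S(0,R_0)$ one analyzes the \emph{tangent cone} of the convex set $\overline X$ at $q$. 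If $\inf\{H(x)d(x,q):x\in\D X\}>0$, the tangent cone is shown to be a nonflat cone, and this has smooth points $q_1$ where one extracts a smooth backward mean curvature flow ending (by the convexity estimate) in a convex, nonflat cone — impossible by the equality case of the maximum principle for $\frac{\la_1}{H}$ (Appendix~\ref{app_preserved_curv}). This forces $H(x)d(x,q)\to 0$ along a sequence, which is exactly what lets Theorem~\ref{thm-intro_local_curvature_bounds} be reapplied near $S(0,R_0)$ to extend the curvature bound past $R_0$, contradicting maximality. Your sketch is missing this tangent-cone/rigidity mechanism entirely.

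There is a second, quieter gap: you never deal with connectedness. The partial limit may a priori have several connected components, and curvature could blow up on a component that does not contain the origin; the desired global limit must be a single convex flow, not a union. The paper spends Step~4 and parts of Steps~1, 6, 7 showing that nearby components are cleared out (via the Andrews condition, Lemma~\ref{lem-speed_limit}, and the argument of Corollary~\ref{cor-intro_ancient}). Any correct proof needs an analogue of this step, since without it the statement about the limit being an $\al$-Andrews flow with convex time slices does not follow from pointwise curvature bounds and $\la_1\geq 0$ alone.
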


Theorem \ref{thm-intro_h_gives_global_convergence} allows us to pass to a
subsequence which converges globally and smoothly on backwards spacetime, after normalizing the mean curvature at a single point.
In fact, we can even extend the convergence
$\hat\K^j\ra\hat\K^\infty$
to the time interval $(-\infty,T)$, for any $T$ up to
the blowup time of $\hat\K^\infty$, provided $\{\K^j\}$ is defined on the
appropriate time intervals. The global convegence theorem is a powerful tool and indeed has many consequences; we will discuss some of them now.

The first application is in fact a variant of the global convergence theorem that works even without having
a priori bounds on the mean curvature at any point; this need arises when analyzing the formation of singularities, and the typical situation is as follows: Given an $\al$-Andrews flow 
$\K$, we consider a {\em blow-up sequence} $\K^j$ obtained by applying
a sequence of parabolic rescalings $\{(p,t)\mapsto (\la_j(p-p_j),\la_j^2(t-t_j))\}$
where $\{\la_j\}$ is any sequence of scaling factors tending to $\infty$.  After passing to a subsequence, we can always pass to a Hausdoff limit $\K^j\to \K$, called a {\em limit flow} (in the special case when the basepoint is fixed, it is called a \emph{tangent flow}).
Note that (assuming the basepoints $(p_j,t_j)$ don't hit the boundary of the domain of definition), the limit flow is defined on entire $\R^{N,1}$, and that it is an $\al$-Andrews flow (by definition).
A key question is then to investigate the structure all limit flows. Even more generally, one can investigate the structure of all ancient solutions.

\begin{theorem}[Structure of ancient $\al$-Andrews flows]
\label{thm-intro_smooth_convex_til_extinct}
Let $\K$ be an ancient $\alpha$-Andrews flow defined on $\R^N\times (-\infty,T_0)$ (typically $T_0=\infty$, but we allow $T_0<\infty$ as well),
and let $T\in (-\infty,T_0]$ be the extinction time of $\K$, 
i.e. the supremum of all $t$ with $K_t\neq\emptyset$. Then:
\begin{enumerate}
\item $\K\cap \{t<T\}$ is smooth. In fact, there exists a function $\ol{H}$ depending only on the Andrews constant $\al$ such that whenever $\tau< T-t$, then
$
H(p,t)\leq \ol{H}(\tau,d(p,K_{t+\tau})).
$
\item $\K$ is either a static halfspace, or it has strictly positive mean curvature and sweeps out all space, i.e. $\bigcup_{t<T_0}K_t=\R^N$.
\item $\K$ has convex time slices.
\item If $T<T_0$, the final time-slice $K_T$ has dimension at most $N-2$.
\end{enumerate}
Furthermore, if $\K$ is backwardly self-similar, then it is either (i) a static
halfspace or (ii) a shrinking round sphere or cylinder. In particular, every ancient $\al$-Andrews flow has a blow-down limit (ancient soliton) that is equal to one of these self-similar solutions.
\end{theorem}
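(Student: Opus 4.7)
My plan is to prove the enumerated items (3), (1), (2), (4), and then the self-similar classification and blowdown claim, in that order. Convexity of the time slices is what feeds most of the rest, so I start there. For (3), I fix a boundary point $(p,t)\in\partial\K$. If $H(p,t)=0$, the viscosity $\alpha$-Andrews condition provides interior and exterior tangent balls of arbitrarily large radius at $p$, so $K_t$ is a halfspace in a neighborhood of $p$. If $0<H(p,t)<\infty$, I set $r=1/H(p,t)$; since $\K$ is ancient on $\R^N\times(-\infty,T_0)$, the parabolic ball $P(p,t,\eta r)$ is contained in the domain of $\K$ for every $\eta<\infty$, so Theorem~\ref{thm-intro_convexity_estimate} yields $\lambda_1(p,t)\geq -\varepsilon H(p,t)$ for every $\varepsilon>0$, hence $\lambda_1(p,t)\geq 0$. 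Approximation by smooth $\alpha$-Andrews flows (Theorem~\ref{thm_ell_reg}) handles the remaining viscosity points, and so every $K_t$ is convex.

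For (1), Theorem~\ref{thm-intro_local_curvature_bounds} applied at scale $r=1/H(p,t)$ converts any finite upper bound on $H(p,t)$ into smoothness and derivative bounds on $P(p,t,\rho r)$, so smoothness of $\K\cap\{t<T\}$ reduces to local boundedness of $H$. I would prove local boundedness by contradiction: if $H(p_n,t_n)\to\infty$ with $t_n\leq T-\tau_0$ and $d(p_n,K_{t_n+\tau_0})\leq D$, parabolic rescaling by $\lambda_n:=H(p_n,t_n)$ and Theorem~\ref{thm-intro_h_gives_global_convergence} (with its forward extension, valid because the rescaled flow persists for time $\lambda_n^2\tau_0\to\infty$ after $0$) produces an eternal convex $\alpha$-Andrews limit $\hat\K^\infty$ with $\hat H(0,0)=1$; a rigidity step (see the obstacle below) rules this out. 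The universal function $\bar H(\tau,d)$ is then read off from the same compactness scheme.

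For (2), if $H$ vanishes at any boundary point then Andrews forces $K_t$ to be a halfspace locally and ancient avoidance extends this globally to the static halfspace; otherwise $H>0$ everywhere and $t\mapsto K_t$ is strictly decreasing, so $\bigcup_{t<T_0}K_t$ is a convex set, and if it were proper a supporting hyperplane would confine every $K_t$ to a fixed halfspace, whereupon the backward blowdown (classified below) would be a halfspace and backward avoidance would force $H\equiv 0$, a contradiction. For (4), when $T<T_0$ the set $K_T=\bigcap_{t<T}K_t$ is compact convex with empty interior (else avoidance would extend $\K$ past $T$), and it cannot span an affine hyperplane either: then $K_t$ would be a thin convex slab whose Andrews inradius on the large faces tends to zero as $t\to T$, forcing extinction before $T$ at points of the slab, which is incompatible with avoidance against a shrinking ball placed normal to the slab. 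For the self-similar case, backward self-similarity together with (3) makes $K_{-1}$ a convex mean-convex self-shrinker satisfying the $\alpha$-Andrews condition; Huisken's classification, with $\alpha$-Andrews excluding the multiplicity-two plane, leaves only the halfspace, the round sphere, and the round cylinder. For the blowdown claim I parabolically rescale $\K$ by $\lambda_k\to 0$ and apply Theorem~\ref{thm-intro_h_gives_global_convergence}; the scale-invariance of the procedure forces the limit to be backwardly self-similar, hence on the classified list.

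The main obstacle I anticipate is the rigidity step in (1): eternal convex $\alpha$-Andrews flows with strictly positive $H$ do exist (e.g., the bowl soliton), so ruling out the limit $\hat\K^\infty$ requires more than bare compactness. The contradiction must exploit that $\hat\K^\infty$ arises as a blowup at unbounded curvature scale of a fixed ancient ambient flow; combined with Huisken's monotonicity, a Hamilton-type differential Harnack, or a finer Andrews-based pinching, this should force the limit to be a shrinker, which is then excluded by its eternal nature.
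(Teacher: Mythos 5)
Your treatment of parts (3), (4), the self-similar classification, and the blowdown existence tracks the paper reasonably well. But there is a genuine gap in part (1), and you yourself identify it: rescaling at a hypothetical point of unbounded curvature and invoking the global convergence theorem produces an eternal convex $\alpha$-Andrews flow with $H(0,0)=1$, and this is \emph{not} a contradiction --- the bowl soliton and many other eternal flows exist. Your proposed fixes (Harnack inequality, additional pinching, Huisken monotonicity to force a shrinker) are speculative and would add substantial machinery; none of them is obviously sufficient, and indeed eternal convex $\alpha$-Andrews flows with positive mean curvature simply cannot be excluded by such general considerations.

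The paper avoids this trap entirely. It never rescales at the putative bad point. Instead it proves (1) directly via the speed limit lemma (Lemma~\ref{lemma_speed}): since $\tau<T-t$, the set $K_{t+\tau}$ is nonempty, and the speed limit lemma produces a nearby boundary point $(p',t')$ with $|p-p'|\le d(p,K_{t+\tau})$, $t'\in[t,t+\tau]$, and $H(p',t')\le d(p,K_{t+\tau})/\tau$. One then applies the global convergence theorem (or its companion curvature estimate, Corollary~\ref{thm-more_global_parabolic_bounds}) centered at this well-controlled point $(p',t')$, which gives universal curvature bounds on a parabolic ball covering $(p,t)$. No rigidity argument is needed, and the universal function $\bar H(\tau,d)$ comes out of this computation. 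In other words, the quantitative survival assumption $\tau<T-t$ is exploited not to run a blowup-and-classify scheme, but to find a good basepoint before blowing up at all.

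Two further minor points. For the sweepout part of (2), the paper uses the elementary Lemma~\ref{lemma_admissibility} (definite backward progress from the Andrews condition), rather than your supporting-hyperplane-plus-blowdown argument; the latter has a gap, since a shrinking cylinder with axis parallel to the supporting hyperplane can fit inside a halfspace, so the blowdown being confined to a halfspace does not immediately force it to be the static halfspace. For (3), proving convexity from the convexity estimate also needs a separate argument ruling out extra connected components (as in Corollary~\ref{cor-intro_ancient}), which your sketch skips.
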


Theorem \ref{thm-intro_smooth_convex_til_extinct} says in particular that ancient $\al$-Andrews flows are 
smooth and convex until they become extinct. Also, as was already implicit in the discussion above, given any $\al$-Andrews flow $\K$ we can apply
Theorem \ref{thm-intro_smooth_convex_til_extinct}
and Theorem \ref{thm-intro_h_gives_global_convergence} to its blow-up sequences
to recover the main results from \cite{white_nature}, which are White's structure
theorem for limit flows, and White's corollary about normalized limits.
Essentially, the assertion is that if $\K$ is a limit flow, then the convergence $\K^j\to \K$ is smooth away from the extinction, and this includes in particular normalized limits, see Corollary \ref{cor_white_limits} and Corollary \ref{cor_white_normalized_limits} for the precise statements.
Some typical examples for limit flows are the shrinking round cylinders 
$\R^{j}\times B^{N-j}$ ($T=0$, $K_0=\R^{j}$), translating solitons like the 
bowl ($T=\infty$), and static halfspaces ($T=\infty$). 

Theorem \ref{thm-intro_h_gives_global_convergence} and Theorem 
\ref{thm-intro_smooth_convex_til_extinct} naturally come along with their companion curvature 
estimates, Corollary \ref{thm-more_global_parabolic_bounds} and Corollary 
\ref{cor_curv_est_3}, respectively; see also the Harnack inequality in Corollary \ref{cor-intro_harnack}.  
 
As another application of Theorem \ref{thm-intro_smooth_convex_til_extinct},
we obtain a new proof, also entirely without using Brakke flows (like everywhere else in the present paper),
of White's partial regularity theorem \cite{white_size}.

\begin{theorem}[Partial regularity theorem]\label{thm_intro_partial_regularity}
For any $\al$-Andrews flow,
the parabolic Hausdorff dimension of the singular set is at most $N-2$.
\end{theorem}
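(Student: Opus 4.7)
The plan is Federer-style parabolic dimension reduction, built on the structure theorem for ancient $\al$-Andrews flows. I define the singular set $\S\subseteq\D\K$ as the set of boundary points where the flow is not smooth in any parabolic neighborhood; by Theorem \ref{thm-intro_local_curvature_bounds}, equivalently $\S$ is the set of boundary points at which the viscosity mean curvature is infinite.

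At each singular point $X=(p_0,t_0)\in\S$, I extract tangent flows by parabolic rescaling $(p,t)\mapsto(\la_j(p-p_0),\la_j^2(t-t_0))$ with $\la_j\to\infty$. Since the class of $\al$-Andrews flows is closed under parabolic rescaling and Hausdorff limits (Definition \ref{def_andrews_flows}), a subsequential Hausdorff limit $\K^\infty$ is an ancient $\al$-Andrews flow defined on all of $\R^{N,1}$. Applying the last sentence of Theorem \ref{thm-intro_smooth_convex_til_extinct}, a further blow-down of $\K^\infty$ yields a backwardly self-similar tangent flow of $\K$ at $X$, which must therefore be a static halfspace, a shrinking round sphere, or a shrinking round cylinder $\R^j\times S^{N-1-j}$ with $0\le j\le N-2$. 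The halfspace case is incompatible with $X\in\S$: if the tangent flow were a static halfspace, Theorem \ref{thm-intro_h_gives_global_convergence} would give smooth convergence of the rescalings on a definite parabolic ball, which pulls back to smoothness of $\K$ in a parabolic neighborhood of $X$, contradicting $X\in\S$.

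I then stratify $\S=\bigcup_{k=0}^{N-2}\S^k$, where $\S^k$ consists of those singular points admitting no self-similar tangent flow with translation-symmetry group of rank strictly greater than $k$; by the classification above, $\S=\S^{N-2}$. A standard parabolic Federer reduction argument (in the style of \cite{white_size}) then shows that the parabolic Hausdorff dimension of $\S^k$ is at most $k$: the contrapositive is that if $\S^k$ had anomalously high density near some $X\in\S^k$, then iterated blow-ups, each producing a further $\al$-Andrews tangent flow by the closure properties built into Definition \ref{def_andrews_flows}, would yield a self-similar tangent flow at $X$ with translation subgroup of rank strictly greater than $k$, contradicting $X\in\S^k$. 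Combining the strata gives the bound $N-2$.

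The main obstacle is the parabolic Federer reduction step itself. While the classification of self-similar tangent flows is a clean consequence of Theorem \ref{thm-intro_smooth_convex_til_extinct}, the iterated-blow-up/density argument must be executed with care because of the anisotropic scaling of the parabolic metric, and one must verify quantitatively that the rank of the translation-symmetry group of a tangent flow genuinely bounds the local dimension of the singular set at $X$. The dramatic simplification over \cite{white_size} is that none of the geometric-measure-theoretic inputs (expanding hole theorem, sheeting theorem, Bernstein-type arguments) are needed; the entire argument rests on the structure theorem for ancient $\al$-Andrews flows, itself proved by maximum-principle methods.
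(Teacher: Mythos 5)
Your proposal follows the same broad outline as the paper's proof: both rest on the classification of backwardly self-similar solutions supplied by Theorem \ref{thm-intro_smooth_convex_til_extinct}, both identify the singular set as the set of boundary points possessing no static-halfspace tangent flow, and both conclude via parabolic dimension reduction at a density point. The approaches are the same in spirit, but differ in two material respects. First, the paper explicitly invokes the \emph{localized} Huisken monotonicity formula (Appendix \ref{app_huisken_monotonicity}, applied to the smooth approximators from Section \ref{sec_structure_regularity}) to conclude that \emph{every} tangent flow at a singular point is backwardly self-similar, hence a plane, sphere, or cylinder; you only extract \emph{some} self-similar tangent flow, via an iterated blow-down of the initial tangent flow $\K^\infty$, and the claim that such a blow-down is again a tangent flow of $\K$ at $X$ uses an unstated diagonal argument. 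Second, since the paper has the self-similarity of all tangent flows in hand, it avoids the stratification-by-translation-rank entirely: a single blow-up at a density point produces a tangent flow whose singular set has parabolic Hausdorff dimension exceeding $N-2$, which directly contradicts the classification (and the same argument yields the $k$-convex refinement simultaneously). Your $\S^k$-stratification and iterated Federer reduction, in the style of \cite{white_size}, is correct and would work, but it is more machinery than necessary once the classification is sharp.

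One point you should make explicit: the dimension reduction step cannot be run on the Andrews condition and Theorem \ref{thm-intro_smooth_convex_til_extinct} alone -- Huisken's monotonicity formula is required to make tangent flows at density points self-similar, and it enters either directly (as in the paper) or implicitly through the proof of the last assertion of Theorem \ref{thm-intro_smooth_convex_til_extinct}. Your closing remark that ``the entire argument rests on the structure theorem \ldots itself proved by maximum-principle methods'' slightly overstates the case, since the structure theorem's blow-down classification itself invokes Huisken's monotonicity. A small final note: the paper excludes the halfspace tangent flow at singular points using the halfspace convergence theorem (Theorem \ref{thm-halfspace_smooth_convergence}) rather than Theorem \ref{thm-intro_h_gives_global_convergence}; both are applicable here, but the former is the more direct tool.
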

We recall  that the {\em parabolic Hausdorff dimension} refers to the Hausdorff dimension
with respect to the parabolic metric on spacetime
$d((x_1,t_1),(x_2,t_2))=\max({|x_1-x_2|,|t_1-t_2|^{\frac12}})$.

\begin{remark}
In fact, using quantitative stratification instead of standard stratification 
and dimension reduction, one can strengthen the Hausdorff estimate to a Minkowski estimate and obtain estimates for the regularity-scale, etc; see Cheeger-Haslhofer-Naber \cite{CHN}.
\end{remark}

\subsection{$k$-convexity and the cylindrical estimate}
Several results have refinements in the case of $k$-convex flows.
Special instances are the convex case with Huisken's classical result
\cite{Huisken_convex},
the $2$-convex case of Huisken-Sinestrari
\cite{huisken-sinestrari3}, and 
the general mean convex case.
We  recall that a smooth normally oriented
hypersurface $M\subset \R^N$ is {\em $k$-convex} 
or {\em strictly $k$-convex} if 
$\lambda_1+\ldots+\lambda_k\geq 0$ or 
$\lambda_1+\ldots+\lambda_k> 0$ respectively,
where $\lambda_1\leq\ldots\leq\lambda_{N-1}$ 
denote the principal curvatures.   
Note that a cylinder $\R^j\times S^{N-1-j}\subset\R^N$ is strictly
$k$-convex if and only if  $j<k$.

The maximum principle
implies
that a compact smooth mean curvature flow with (strictly) $k$-convex initial condition remains 
(strictly) $k$-convex,
see Appendix \ref{app_preserved_curv}. In fact, after waiting a short time, by the strict maximum principle we can assume that
\begin{equation}
\label{eqn_beta_k-convexity}
\lambda_1+\ldots+\lambda_k\geq \be H,
\end{equation}
for some $\be>0$, which is again preserved along the flow.
Following the elliptic regularization approach of White \cite{white_subsequent},
it was shown in \cite{CHN} that this uniform $k$-convexity is also preserved beyond the first singular time, i.e. (\ref{eqn_beta_k-convexity})
holds at any smooth point $p\in \D K_t$. For convenience of the reader, we explain this again in Theorem \ref{thm_ell_reg}.

The uniform $k$-convexity yields further restrictions on the possible backwardly selfsimilar solutions that can show up in Theorem \ref{thm-intro_smooth_convex_til_extinct}. 

\textbf{Refinement of Theorem \ref{thm-intro_smooth_convex_til_extinct} for $k$-convex flows.}
\textit{If in addition (\ref{eqn_beta_k-convexity}) holds at all smooth points, then in the statement of Theorem \ref{thm-intro_smooth_convex_til_extinct} out of the cylinders $\R^j\times B^{N-j}$ only the ones with $j<k$ can arise.}

As a consequence, the Hausdorff dimension estimate in Theorem \ref{thm_intro_partial_regularity} can also be refined.

\textbf{Refinement of Theorem \ref{thm_intro_partial_regularity} for $k$-convex flows.}
\textit{If $\K$ is an $\al$-Andrews flow such that (\ref{eqn_beta_k-convexity}) holds at all smooth points, then the
parabolic Hausdorff dimension of the singular set is at most $k-1$.}

\begin{remark}
This in turn leads to refined Minkowski dimension estimates and $L^p$-estimates, see Cheeger-Haslhofer-Naber \cite{CHN}.
\end{remark}

Finally, we discuss the cylindrical estimate
which says,  roughly 
speaking, that near a boundary point in a uniformly $k$-convex flow, 
either the flow is
uniformly  $(k-1)$-convex or it is close to a shrinking round $(k-1)$-cylinder
$\R^{k-1}\times B^{N-(k-1)}$,
provided the
flow exists in a subset of backward spacetime which is large compared to the scale given by the mean curvature. 
To state this precisely, we say that an 
$\alpha$-Andrews flow is {\em $\varepsilon$-close to a shrinking round $j$-cylinder
(or cylindrical domain) 
$\mathbb{R}^j\times B^{N-j}$ near $(\bar p,\bar t)$}, if after applying
the parabolic rescaling 
$$
(p,t)\mapsto (H^{-1}(\bar p,\bar t)(p-\bar p),H^{-2}(\bar p,\bar t)(t-\bar t))
$$ 
and a rotation it becomes $\varepsilon$-close in the 
$C^{\lfloor 1/\varepsilon\rfloor}$-norm 
on $P(0,0,1/\varepsilon)$ to the standard shrinking $j$-cylinder with $H(0,0)=1$.

\begin{theorem}[Cylindrical estimate]\label{thm-intro_cylindrical}
For all $\varepsilon,\alpha,\beta>0$ there exists $\delta=\delta(\varepsilon,\alpha,\beta)>0$ 
such that the following holds.

Let $\K$ be an $\alpha$-Andrews flow that is uniformly $k$-convex in the sense
that 
$\lambda_1+\ldots + \lambda_k\geq \beta H$ at every smooth boundary point,
 and suppose $p\in\D K_t$ is a boundary point 
such that
$\K$ is defined in $P(p,t,\delta^{-1}H^{-1}(p,t))$. If 
$$
\frac{\lambda_1+\ldots+\lambda_{k-1}}{H}(p,t)<\delta\,, 
$$
then  $\K$ is $\varepsilon$-close to a shrinking round $(k-1)$-cylinder
$\mathbb{R}^{k-1}\times B^{N-(k-1)}$ near $(p,t)$.
\end{theorem}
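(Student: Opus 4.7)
The strategy is a blow-up/compactness argument, where we use the global convergence theorem to extract an ancient limit flow, split off a Euclidean factor using the strong maximum principle, and identify the remaining factor as a shrinking sphere.

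\emph{Contradiction setup and compactness.} Assume the claim fails for some $\varepsilon,\alpha,\beta>0$. Then there is a sequence of $\alpha$-Andrews flows $\K^j$, each uniformly $k$-convex with $\lambda_1+\ldots+\lambda_k\geq \beta H$, and boundary points $(p_j,t_j)$ with $H_j:=H(p_j,t_j)$, such that $\K^j$ is defined on $P(p_j,t_j,jH_j^{-1})$, the ratio $(\lambda_1+\ldots+\lambda_{k-1})/H$ at $(p_j,t_j)$ is less than $1/j$, yet $\K^j$ is not $\varepsilon$-close to a shrinking round $(k-1)$-cylinder near $(p_j,t_j)$. Parabolically rescale each $\K^j$ by $H_j$ and translate $(p_j,t_j)$ to the origin to obtain $\hat{\K}^j$ with $\hat H(0,0)=1$, defined on larger and larger parabolic balls. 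By Theorem \ref{thm-intro_h_gives_global_convergence}, after passing to a subsequence, $\hat{\K}^j\to\K^\infty$ smoothly on $\R^N\times(-\infty,0]$, and $\K^\infty$ is an ancient $\alpha$-Andrews flow with convex time slices.

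\emph{Splitting of the limit.} The pinching $\lambda_1+\ldots+\lambda_k\geq\beta H$ persists on $\K^\infty$, and $(\lambda_1+\ldots+\lambda_{k-1})/H$ vanishes at $(0,0)$. Since the time slices are convex, each $\lambda_i\geq 0$, so $\lambda_1=\ldots=\lambda_{k-1}=0$ at $(0,0)$ while $\lambda_k(0,0)\geq\beta>0$. By Hamilton's strong maximum principle applied to the second fundamental form on the convex ancient solution $\K^\infty$, the $(k-1)$-dimensional null distribution of $A$ at $(0,0)$ extends to an invariant parallel distribution on the whole flow, producing an isometric splitting $\K^\infty=\R^{k-1}\times\tilde{\K}$, where $\tilde{\K}\subseteq\R^{N-k+1}$ is an ancient $\alpha$-Andrews flow with $\tilde H(0,0)=1$ that is uniformly $1$-convex, i.e.\ $\tilde{\lambda}_1\geq \beta\tilde H$ at every smooth point.

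\emph{Identification and contradiction.} Apply Theorem \ref{thm-intro_smooth_convex_til_extinct} to $\tilde{\K}$: any backwardly self-similar blow-down of $\tilde{\K}$ is a static halfspace, a shrinking round sphere, or a shrinking round cylinder $\R^{j}\times S^{N-k-j}$. A halfspace is ruled out because $\tilde H(0,0)=1$ and $\tilde{\K}$ is not a halfspace (so Theorem \ref{thm-intro_smooth_convex_til_extinct}(2) gives $\tilde H>0$), and any cylinder with a Euclidean factor has zero smallest principal curvature, contradicting uniform $1$-convexity. Hence the blow-down is a shrinking round sphere $S^{N-k}$. A rigidity argument — using that $\tilde{\lambda}_1/\tilde H$ attains its maximal value $1/(N-k)$ in the blow-down limit while satisfying a maximum principle that prevents this ratio from exceeding that value in the past — forces $\tilde{\K}$ itself to be a shrinking round sphere. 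Therefore $\K^\infty=\R^{k-1}\times S^{N-k}$ is precisely the shrinking round $(k-1)$-cylinder with $H(0,0)=1$. Smooth convergence $\hat{\K}^j\to\K^\infty$ on $P(0,0,1/\varepsilon)$ then yields $\varepsilon$-closeness of $\hat{\K}^j$ to this cylinder for large $j$, contradicting the choice of $\K^j$.

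The main obstacle is the final rigidity step: identifying a uniformly convex ancient $\alpha$-Andrews flow in $\R^{N-k+1}$ whose tangent flow at $-\infty$ is a shrinking round sphere as being itself a shrinking round sphere. The compactness and splitting steps follow a standard template built from Theorems \ref{thm-intro_h_gives_global_convergence} and \ref{thm-intro_smooth_convex_til_extinct} and Hamilton's maximum principle for tensors, so the delicate work is concentrated in pinning down the shape of the compact convex factor.
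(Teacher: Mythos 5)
Your proof follows the same overall blueprint as the paper's: assume failure, parabolically rescale, use Theorem \ref{thm-intro_h_gives_global_convergence} to extract a smooth convex ancient limit, split off a Euclidean $\R^{k-1}$-factor via the rigidity case of the tensor maximum principle, and then identify the remaining uniformly convex factor $\tilde\K$ as a round shrinking sphere to derive a contradiction. The contradiction setup, global convergence, and splitting steps match the paper essentially verbatim.

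The divergence is in the identification step, and that is precisely where you flag your own uncertainty. The paper isolates this as Lemma \ref{lemma_sphere}: an ancient $\al$-Andrews flow with $\lambda_1/H\geq\beta>0$ is a shrinking round sphere. The paper's proof of that lemma applies Huisken's monotonicity formula and Theorem \ref{thm-intro_smooth_convex_til_extinct} \emph{at both ends} --- an asymptotic soliton as $t\to-\infty$ \emph{and} a tangent flow at the singular time after extending the flow forward --- concluding via the equality case of the monotonicity formula (equal Gaussian densities at both scales force self-similarity). Your argument instead uses only the blow-down at $-\infty$, identifies it as a sphere by the pinching, and then tries to propagate the pinching $\tilde\lambda_1/\tilde H=1/(N-k)$ forward from $-\infty$ by a maximum-principle rigidity.

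The gap in this last step is real, not merely a matter of polish. To run your argument one needs $\min_{\D\tilde K_s}\tilde\lambda_1/\tilde H\to 1/(N-k)$ as $s\to-\infty$, and then the monotone non-decrease of this minimum (tensor maximum principle) together with the algebraic bound $\tilde\lambda_1/\tilde H\leq 1/(N-k)$ forces equality everywhere and hence umbilicity. But the blow-down convergence provided by Theorem \ref{thm-intro_h_gives_global_convergence} is smooth only on compact subsets of spacetime; it does not a priori control the infimum of $\tilde\lambda_1/\tilde H$ over the \emph{entire} (possibly noncompact) time slice $\D\tilde K_{s_j}$. One would either have to establish compactness of $\tilde K_s$ first (essentially Hamilton's pinching/compactness result, which the paper deliberately avoids --- see Remark \ref{rem_dropping} vs.\ the remark after Lemma \ref{lemma_sphere}), or supply a separate argument that the noncompact ends, if any, do not spoil the infimum. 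The monotonicity-formula route in the paper sidesteps this entirely: the Gaussian density squeezes from both sides without needing any global pointwise control of $\lambda_1/H$. So your plan is correct in architecture and in the splitting step, but the final rigidity step as sketched does not close without additional input.
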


We emphasize that Theorem \ref{thm-intro_cylindrical} holds for any $k$. It says in particular $\frac{|\lambda_i-\lambda_j|}{H}(p,t)<\varepsilon$ for all $i,j\geq k$. For $k=2$ and $N\geq 4$ this is essentially the statement of Huisken-Sinestrari \cite[Thm 1.5]{huisken-sinestrari3}, for $k=3$ they proved a similar result recently \cite{Huisken_privatecommunication}.

\begin{remark}
Theorem \ref{thm-intro_h_gives_global_convergence} quickly leads to many further structural results about $k$-convex $\alpha$-Andrews flows and the corresponding ancient solutions that arise after blowup. In particular, these ancient solutions have asymptotic curvature ratio equal to infinity, asymptotic volume ratio equal to zero and asymptotically split off a factor $\R^j$ for some $j < k$. Also, curvature control is equivalent to volume control.
\end{remark}

\subsection{Outline of proofs}
We now give a glimpse of the proofs.

\subsubsection*{Halfspace convergence}
The key for our  short treatment of the local theory is our halfspace convergence result, Theorem \ref{thm-halfspace_smooth_convergence}.
This result says, roughly speaking, that if an $\alpha$-Andrews flow is weakly close to some halfspace at some time, then it is strongly close to that halfspace both 
forward and backward in time.
To prove this, we first use
comparison with spheres and the Andrews condition to show that the flow is 
Hausdorff close to the halfspace both forward and backward in time.
Then, using the one-sided minimization property of mean convex flows and the local regularity theorem, we argue that it must in fact be strongly close. The proof takes less than a page, and vividly illustrates the efficiency of the 
$\al$-Andrews condition in combination with the elementary local regularity theorem of White \cite{white_regularity}. To keep everything self-contained and for convenience of the reader, we give a half-a-page proof of the local regularity theorem in Appendix \ref{app_easy_brakke}.

\subsubsection*{Curvature estimate and convexity estimate}
Arguing by contradiction we show that the halfspace convergence theorem quickly implies (and is in fact equivalent to) the curvature estimate. 
To prove the convexity estimate we consider the rigidity case of the maximum principle for $\frac{\lambda_1}{H}$, as in \cite{white_nature}.
However, while White had to give a very sophisticated argument to pass to a smooth local limit, in our proof  -- thanks to the halfspace convergence theorem -- everything is quite simple.

\subsubsection*{Global convergence}
The main claim to be proven is that the curvature of $\D \hat K^j_t$
in $B(0,R)$ is bounded by a function of $R$, for large $j$.  We prove this by contradiction
using a scheme of proof somewhat 
similar to the one in Perelman's proof of the canonical neighborhood theorem \cite[Sec. 12]{perelman_entropy}.
Roughly speaking, the crux of the argument is as follows.
We look at the supremal radius $R_0$ 
where such a curvature bound holds,  which then allows us to pass to a smooth
limit in the open ball $B(0,R_0)$.  This smooth limit will be convex.  
We then examine the structure of this
convex limit.  By applying the equality case of the maximum principle for
$\frac{\la_1}{H}$ and the Andrews condition, we argue that its mean
curvature remains controlled near  the sphere $S(0,R_0)$;
this leads to a contradiction with the choice of $R_0$. 
The actual argument is  somewhat more complicated due to the possibility
that the intersections $\hat K^j_t\cap B(0,R_0)$ might have more than one connected
component.

\subsubsection*{The structure theorem}
Part (1) is based on the following idea:
If we can find points with controlled mean curvature, then we can apply the global convergence theorem; if we cannot find such points, then we argue that the flow has to clear out very quickly.
The remaining assertions easily follow from the results established previously combined with a classical argument of Huisken \cite[Sec. 5]{Huisken_local_global}.

\subsubsection*{The cylindrical estimate}
If the cylindrical estimate failed, 
using the global convergence theorem we could pass to a limit that splits, 
$K_t=\R^{k-1}\times N_t$, but that is not a shrinking round cylinder.
However, since $N_t$ is an ancient $\alpha$-Andrews flow with $\lambda_1/H\geq \beta$ 
we argue that $\D N_t$ must be a shrinking round sphere; this gives the contradiction.

\subsubsection*{Approximation by smooth $\al$-Andrews flows} 
We consider any level set flow $\{K_t\}$ with smooth $\al$-Andrews initial condition and prove that the stabilized flow $\{K_t\times\R\}$ can be approximated by a smooth family of flows satisfying the Andrews condition.
Our proof is based on the elliptic regularization approach from \cite[Sec. 7]{evans-spruck},
\cite{white_subsequent}, and an adaption of the argument
by Andrews-Langford-McCoy \cite{Andrews_Langford_McCoy}.

\subsubsection*{The partial regularity theorem}
The partial regularity theorem follows from our structure theorem for ancient solutions and Huisken's monotonicity formula \cite{Huisken_monotonicity} (see also Appendix \ref{app_huisken_monotonicity}). Like everywhere else in the present paper, we do not need the notion of Brakke flows.

\subsection{Organization of the paper}
In Section \ref{sec-halfspace_and_consequences}, we prove the halfspace convergence theorem (Theorem \ref{thm-halfspace_smooth_convergence}),
the curvature estimate (Theorem \ref{thm-intro_local_curvature_bounds}),
and the convexity estimate (Theorem \ref{thm-intro_convexity_estimate}).
In Section \ref{sec-global_convergence_and_consequences}, we prove the global convergence theorem (Theorem \ref{thm-intro_h_gives_global_convergence}),
the structure theorem for ancient $\al$-Andrews flows (Theorem \ref{thm-intro_smooth_convex_til_extinct}),
and the cylindrical estimate (Theorem \ref{thm-intro_cylindrical}).
In Section \ref{sec_structure_regularity}, we prove the theorem about approximation by smooth flows satisfying the Andrews condition (Theorem \ref{thm_ell_reg}),
and the partial regularity theorem (Theorem \ref{thm_intro_partial_regularity}).
In Appendix \ref{app_preserved_curv}, \ref{app_huisken_monotonicity}, and \ref{app_easy_brakke} we explain some background material.

\emph{Smoothness and admissibility.} 
In Section \ref{sec-halfspace_and_consequences} and \ref{sec-global_convergence_and_consequences} we give the proofs in the smooth setting. Once the theorems are established in the smooth setting, it will be easy to obtain them for general $\al$-Andrews flows by approximation, see Section \ref{sec_structure_regularity}.
Also, some arguments can be shortened by imposing the technical assumption that the parabolic balls in the statement are admissible, see Section \ref{sec-halfspace_and_consequences} and Appendix \ref{app_remove_admissibility}.

\noindent\textbf{Acknowledgments.} We thank Brian White and Gerhard Huisken for helpful discussions about their work. BK has been supported by NSF Grants DMS-1007508 and DMS-1105656.

\section{Halfspace convergence and consequences}\label{sec-halfspace_and_consequences}

\subsection{Halfspace convergence}\label{subsec_halfspace_conv}
We begin with the following halfspace convergence result. This result is the key tool for our  short proofs of 
the curvature estimate and the convexity estimate.

\begin{theorem}[Halfspace convergence]
\label{thm-halfspace_smooth_convergence}
Suppose $T_0\geq 0$, and $\{\K^j\}$  is a sequence  of $\al$-Andrews flows such that
\begin{enumerate}
\item For every $R<\infty$, the flow $\K^j$ is defined in $P(0,T_0,R)$, for $j$ sufficiently large.
\item The origin $0\in\R^N$ lies in $\D K^j_0$ for every $j$.
\item Every compact
subset of the lower halfspace $\{x_N< 0\}$ is contained in the time zero slice 
$K^j_0$, for $j$ sufficiently large.
\end{enumerate}
Then $\K^j$ converges smoothly on compact 
subsets of $\R^N\times (-\infty,T_0]$  to the static halfspace $\{x_N\leq 0\}\times (-\infty,T_0]$.
\end{theorem}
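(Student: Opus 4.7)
The plan has two steps: first show that $K^j_t$ converges to $\{x_N \leq 0\}$ in the Hausdorff sense on compact subsets of $\R^N \times (-\infty, T_0]$, then upgrade to $C^\infty_{\loc}$ convergence via the local regularity theorem. I work with a subsequential Hausdorff limit flow $\K^\infty$, which by Definition \ref{def_andrews_flows} is again an $\alpha$-Andrews flow, and I aim to show that $\K^\infty$ coincides with the static lower halfspace; uniqueness of the limit then promotes this to full-sequence convergence.

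For the Hausdorff step, I first establish $K^\infty_t \supseteq \{x_N \leq 0\}$ for every $t \in (-\infty, T_0]$. At $t = 0$ this is immediate from assumption (3) and closedness of $K^\infty_0$. For $t \leq 0$, the mean convex monotonicity $K^\infty_t \supseteq K^\infty_0$ handles it. For $t \in [0, T_0]$, I compare with a family of large round balls inscribed in the lower halfspace: by (3) these balls lie in $K^j_0$ for $j$ large, and they shrink only slowly under MCF, so by avoidance they persist in $K^j_t$ and exhaust the lower halfspace in the limit. For the reverse containment, let $\epsilon(t) := \inf\{q_N : q \in \partial K^\infty_t\}$. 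A connectedness argument using $\partial K^\infty_t \subseteq \{x_N \geq \epsilon(t)\}$ together with $K^\infty_t \supseteq \{x_N < 0\}$ gives $K^\infty_t \supseteq \{x_N \leq \epsilon(t)\}$. At any boundary point $p$ attaining the infimum, the inscribed balls $\bar B(p - r e_N, r)$ lie in $\{x_N \leq \epsilon(t)\} \subseteq K^\infty_t$ for every $r > 0$, so the viscosity mean curvature satisfies $H(p) = 0$, and the limiting case of the Andrews condition in Definition \ref{def-al_andrews} forces $K^\infty_t = \{x_N \leq \epsilon(t)\}$. To pin down $\epsilon(0) = 0$, I use assumption (2) together with the exterior ball at $0 \in \partial K^j_0$ provided by the $\alpha$-Andrews condition (which is tangent at $0$ and disjoint from $\Int(K^j_0)$): passage to the limit prevents $0$ from entering $\Int(K^\infty_0)$, forcing $0 \in \partial K^\infty_0$. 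Finally, since halfspaces are static under MCF, the set flow $t \mapsto \{x_N \leq \epsilon(t)\}$ with $\epsilon(0) = 0$ must have $\epsilon(t) \equiv 0$.

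To upgrade Hausdorff to smooth convergence, I invoke the one-sided minimization property of mean convex level set flows. Combined with the Hausdorff convergence already established, this forces Huisken's Gaussian density at any spacetime point near the static halfspace to tend to $1$ as $j \to \infty$, and the local regularity theorem from Appendix \ref{app_easy_brakke} then produces smooth convergence of $\K^j$ to the static halfspace with uniform curvature bounds on compact subsets of $\R^N \times (-\infty, T_0]$. The main obstacle is the rigidity step in the Hausdorff limit: transferring the boundary information $0 \in \partial K^j_0$ across the Hausdorff limit and using the $\alpha$-Andrews structure to rule out a translated halfspace $\{x_N \leq \epsilon\}$ with $\epsilon > 0$ require careful use of the viscosity mean curvature machinery and the limiting case of the Andrews condition. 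Once the Hausdorff limit is identified as the static lower halfspace, the smooth upgrade follows routinely from the local regularity theorem.
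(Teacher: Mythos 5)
Your strategy---show Hausdorff convergence to a halfspace, then upgrade to smooth convergence via one-sided minimization and the local regularity theorem---follows the paper's plan in outline, but the intermediate detour through a subsequential Hausdorff limit $\K^\infty$ and its viscosity structure introduces real gaps.

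The central problem is that your rigidity step leans on structural facts about $\K^\infty$ that are not yet available at this point in the development. You invoke the viscosity $\al$-Andrews condition for $\K^\infty$ (to pass from ``$H(p)=0$ at a minimizing boundary point'' to ``$K^\infty_t$ is a halfspace''), you use two-sided avoidance for $\K^\infty$ to control $\epsilon(t)$ forward and backward in time, and you implicitly use convergence of boundaries/complements to transfer $0\in\D K^j_0$ to $0\in\D K^\infty_0$. All of these are properties of general $\al$-Andrews flows that the paper only establishes in Section~\ref{sec_structure_regularity} (Corollary~\ref{properties_of_alpha_andrews}), via the strong Hausdorff convergence machinery (Proposition~\ref{prop_strongHD}), and crucially via the curvature estimate Theorem~\ref{thm-intro_local_curvature_bounds}, which is itself deduced from the halfspace convergence theorem you are trying to prove. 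Being a ``Hausdorff limit of $\al$-Andrews flows'' places $\K^\infty$ in the class of $\al$-Andrews flows by Definition~\ref{def_andrews_flows}, but it does \emph{not} by itself give you viscosity Andrews structure or complement convergence---Hausdorff convergence of closed sets says nothing about boundaries a priori. The paper sidesteps all of this by never passing to a limit set in the Hausdorff step: instead it produces, for each large $d,R$, interior-contact points $q_j\in\D K^j_t$ with $H(q_j,t)\le (N-1)/R$, and then applies the exterior ball of the (smooth) Andrews condition at $q_j$ to get a \emph{uniform} quantitative height bound for $K^j_t$ over $B(0,R')$ with $R'\sim\sqrt{dR}$; the two-sided Hausdorff convergence of the sequence itself then falls out without ever appealing to structure of a nonsmooth limit.

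Two further, more local gaps: (i) you assume the infimum $\epsilon(t)=\inf\{q_N : q\in\D K^\infty_t\}$ is attained, but $\D K^\infty_t$ is closed and unbounded and the infimum need not be realized (and you also do not exclude $\D K^\infty_t=\emptyset$); (ii) your argument that $0\in\D K^\infty_0$ uses the exterior ball of the Andrews condition at $0\in\D K^j_0$, but its radius is $\al/H^j(0,0)$ and there is no a priori bound on $H^j(0,0)$---Theorem~\ref{thm-halfspace_smooth_convergence} makes no assumption on the mean curvature at the origin, and indeed the curvature estimate that would give such a bound is what this theorem is needed to prove. The paper's construction of the nearby low-curvature contact points $q_j$ is precisely what replaces this. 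So while the ingredients (ball comparison, Andrews exterior ball, one-sided minimization, local regularity) are the right ones, the route through the viscosity structure of a subsequential limit is not self-contained at this stage of the theory and the infimum/attainment step is missing.
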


Since this allows us to give a shorter proof and since this seems good enough for practically all applications,
we will temporarily (or more precisely until the end of Section \ref{subsec_curv_est}) replace the assumption (1) by the following slightly stronger \emph{admissibility assumption}:

\begin{enumerate}[(1')]
 \item For every $R<\infty$, the flow $\K^j$ is defined in $P(0,T_0,R)$ and some time slice $K^j_{t_j}$ contains $B(0,R)$, for $j$ sufficiently large.
\end{enumerate}

\begin{remark}
 The case $t_j\leq T_0-R^2$ is of course allowed. In fact, it follows from the assertion of the theorem that $t_j\to -\infty$.
\end{remark}

\begin{remark}\label{rem_admiss_always_holds}
Assumption (1') is satisfied for every blowup sequence.
\end{remark}

\begin{proof}[Proof of Theorem \ref{thm-halfspace_smooth_convergence} (smooth, admissible case)]

We begin by proving convergence to a halfspace in a weak sense:

\begin{claim}\label{claim_halfspace} The sequence of mean curvature flows $\{\K^j\}$ converges in the
pointed Hausdorff topology to a static halfspace in $\R^N\times (-\infty,T_0]$, and similarly
for their complements. 
\end{claim}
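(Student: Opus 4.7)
The plan is to pass to a subsequence along which both $\K^j$ and their complements converge in the pointed Hausdorff topology to spacetime sets $\K^\infty$ and $\L^\infty$, then identify each time-slice of $\K^\infty$ as the static lower halfspace. By the closure of $\al$-Andrews flows under Hausdorff limits (Definition~\ref{def_andrews_flows}), $\K^\infty$ is itself an $\al$-Andrews flow on $\R^N \times (-\infty, T_0]$, so it suffices to show $K^\infty_t = \{x_N \le 0\}$ for every $t \le T_0$, from which $\L^\infty_t = \{x_N \ge 0\}$ follows.

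First I would establish the containment $K^\infty_t \supseteq \{x_N \le 0\}$. For $t \le 0$ this is immediate from mean convexity ($K^j_t \supseteq K^j_0$) combined with hypothesis~(3). For $0 < t \le T_0$, given $p$ with $p_N < 0$, I would inscribe a large closed ball $\bar B(p - he_N,\, h + |p_N|/2) \subseteq \{x_N \le -|p_N|/2\} \subseteq K^j_0$ using~(3), then apply the avoidance principle with a shrinking MCF sphere: choosing $h$ large depending on $T_0$ and $|p_N|$ ensures the shrunk ball still contains $p$ at every time in $[0,T_0]$, so $p \in K^j_t$ for $j$ large, and passing to the Hausdorff limit gives the containment.

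Next, I would pin down $K^\infty_0 = \{x_N \le 0\}$ using the viscosity Andrews condition as the key rigidity input. By hypothesis~(2) and Hausdorff convergence of complements, $0 \in \D K^\infty_0$. The balls $X_R := \bar B(-Re_N, R) \subseteq \{x_N \le 0\} \subseteq K^\infty_0$ are smooth compact comparison domains with $0 \in \D X_R$ and $H_{\D X_R}(0) = (N-1)/R \to 0$; by Definition~\ref{def_viscosity_mean_curvature}, together with mean convexity, the viscosity mean curvature satisfies $H(0) = 0$. The viscosity $\al$-Andrews condition (Definition~\ref{def-al_andrews}) then forces $K^\infty_0$ to be a halfspace, and the unique halfspace containing $\{x_N \le 0\}$ with $0$ on its boundary is $\{x_N \le 0\}$ itself. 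For $0 < t \le T_0$, mean convexity $K^\infty_t \subseteq K^\infty_0$ together with the previous containment yields equality.

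The main obstacle is the range $t < 0$, where mean convexity only provides $K^\infty_t \supseteq \{x_N \le 0\}$. If some $p \in \D K^\infty_t$ has $p_N = 0$, the same comparison balls give $H(p) = 0$ and Andrews forces $K^\infty_t = \{x_N \le 0\}$. Otherwise $\{x_N = 0\} \subseteq \Int(K^\infty_t)$; setting $c := \inf\{p_N : p \in \D K^\infty_t\}$ and using closedness of $K^\infty_t$ together with the sandwich $\{x_N \le c\} \subseteq K^\infty_t$, the infimum is attained at some $p \in \D K^\infty_t$ with $p_N = c > 0$. An analogous viscosity computation with balls $\bar B(p - Re_N, R) \subseteq \{x_N \le c\} \subseteq K^\infty_t$ yields $H(p) = 0$ and hence $K^\infty_t = \{x_N \le c\}$; but this halfspace is static under MCF, so evolving forward would give $K^\infty_0 = \{x_N \le c\}$, contradicting the previous paragraph unless $c = 0$. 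The convergence of complements then follows automatically from $K^\infty_t = \{x_N \le 0\}$.
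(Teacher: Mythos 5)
Your strategy --- extract a pointed Hausdorff limit $\K^\infty$ and then identify it by computing viscosity mean curvature at a boundary point and invoking rigidity in the Andrews condition --- is genuinely different from the paper's: the paper never passes to a limit in this claim, but instead shows directly that each $K^j_t$ clears out compact subsets of the upper halfspace, via interior tangency of $K^j_t$ with a large ball $\bar B_R^d$ (which exists because $0 \in \D K^j_0$), combined with the \emph{smooth} Andrews exterior ball at the resulting contact point $q_j$, where $H(q_j,t) \le (N-1)/R$.

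Unfortunately your route has a circularity problem. The decisive step --- ``$H(0)=0$ in the viscosity sense, so the viscosity $\al$-Andrews condition forces $K^\infty_0$ to be a halfspace'' --- applies the viscosity Andrews condition to the nonsmooth limit set $K^\infty_0$. That property of Hausdorff limits of smooth $\al$-Andrews flows is exactly what Corollary \ref{properties_of_alpha_andrews}(3) establishes, and in the paper's logical order that corollary is derived in Section \ref{sec_structure_regularity} from the curvature estimate (Theorem \ref{thm-intro_local_curvature_bounds}), which in turn rests on the very halfspace convergence you are trying to prove. At the point where Claim \ref{claim_halfspace} appears, one only knows the smooth Andrews condition for the approximators $K^j_0$, and transferring it to $K^\infty_0$ is nontrivial: one must locate boundary points $q_j \in \D K^j_0$ near $0$ with controlled mean curvature and apply the smooth Andrews balls there --- which is precisely the paper's barrier argument. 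In other words, filling in your ``the viscosity Andrews condition then forces $\ldots$'' step essentially reproduces the original proof.

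There is a second, smaller gap of the same flavor: the assertion ``by hypothesis (2) and Hausdorff convergence of complements, $0 \in \D K^\infty_0$'' does not follow from Hausdorff convergence of $\K^j$ and of $\ol{(\K^j)^c}$ alone. From $0 \in \D K^j_0$ one only obtains $0 \in K^\infty_0$ and $0 \in L^\infty_0$, and a priori $0$ could lie in $\Int(K^\infty_0)$ with the complements degenerating into a thin tendril near $0$ (consider, as a toy model, $\ol{B(0,1)} \setminus B(0,1/j) \to \ol{B(0,1)}$). Ruling this out again requires using the Andrews condition for the approximators --- e.g.\ via the tameness property that eventually appears as Claim \ref{claim_strongHD} --- rather than soft topology. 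You would also need to handle the attainment of $c := \inf\{p_N : p \in \D K^\infty_t\}$ (the boundary is unbounded) and the possibility $\D K^\infty_t = \emptyset$ in your argument for $t<0$, but these are secondary to the circularity issue above.
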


\begin{proof}[Proof of Claim \ref{claim_halfspace}]
For $R \in (0,\infty)$, and  $d\in \R$ let 
$$
\bar B_R^d= \ol{B((-R+d)e_N,R)}\,,
$$
so $\bar B_R^d$ is  the closed $R$-ball tangent to
the horizontal 
hyperplane $\{ x_N = d \}$   at the point $d\,e_N$.

When $R$ is large and $d>0$, 
it will take time approximately $dR$ for $\bar B_R^d$ to leave the upper 
halfspace $\{x_N>0\}$.  Since $0\in \D K^j_0$ for all $j$, it follows
that $\bar B_R^d$ cannot be contained in the interior of $K^j_t$
for any $t \in [-T,0]$, where $T \simeq dR$.   
By condition (3), for large $j$ 
we can
find $d_j\leq d$
such that $\bar B_R^{d_j}$ has interior contact with $K^j_t$ at 
some point $q_j$, where $\langle q_j,e_N\rangle < d$, $\|q_j\|\lesssim \sqrt{dR}$,
and moreover
$\liminf_{j\ra\infty} \langle q_j,e_N\rangle\geq 0$.  Hence the mean 
curvature satisfies $H(q_j,t)\leq \frac{N-1}{R}$.  Since $K^j_t$ satisfies 
the $\al$-Andrews condition, 
 there is a closed ball $\bar B_j$
with radius at least $\frac{\al R}{N-1}$ making  exterior contact with $K^j_0$
at $q_j$.
By a simple geometric calculation, this 
implies that $K^j_t$ has  height  $\lesssim \frac{d}{\al} $ in the ball
$B(0,R')$ where $R'$ is comparable  to $\sqrt{dR}$.  As $d$ and $R$ are 
arbitrary, this implies that for any $T>0$,  and any compact subset $Y\subset\{x_N>0\}$, 
 for large $j$ the time slice
$K^j_t$ is disjoint from $Y$, for all 
$t \geq -T$.   Likewise, for any $T>0$ and any
compact subset $Y\subset \{x_N<0\}$, 
the time slice
$K^j_t$ contains $Y$ for all 
$t \in[-T, T_0]$, and large $j$,
because $K^j_{-T}$ will contain a ball whose forward evolution under
MCF contains $Y$ at any time $t\in [-T, T_0]$.
\end{proof}

By admissibility, applying 
the one-sided minimization result from \cite[3.5]{white_size} as in \cite[3.9]{white_size},
we get 
 for every $\eps>0$, every time $t\leq T_0$ and every ball $B(x,r)$ centered
on the hyperplane $\{x_N=0\}$, that 
\begin{equation}\label{eqn_densitybound}
|\D K_t^j \cap B(x,r)| \leq (1+\eps)\omega_n r^n\,,
\end{equation}
for $j$ large
enough. In our smooth setting, this actually is completely elementary, see Remark \ref{rem_one_sided_minimization}.
Then, by the easy smooth version of Brakke's local regularity theorem (see Appendix \ref{app_easy_brakke}),
we have smooth convergence to a static halfspace.
\end{proof}

\begin{remark}
\label{rem_one_sided_minimization}
For convenience of the reader, let us give an elementary derivation of the one-sided minimization property and of the density bound (\ref{eqn_densitybound}).
In general, if $U\subset \R^N$ is an open set and $\{K_{t'}\subset U\}_{t'\leq t}$
is a smooth family of mean convex domains 
such that $\{\D K_{t'}\}$ foliates $U\setminus\Int(K_t)$, then $K_t$ has the following one-sided minimization property:
If $K'\supseteq K_t$ is a closed domain which agrees with $K_t$ outside a 
compact smooth domain $V\subset U$, then
$$
 |\D K_t\cap V|\leq |\D K'\cap V|.
$$
To see this, let $\nu$ be the vector field in $U\setminus \Int(K_t)$ defined by the 
outward unit normals of the foliation. Since $\Div\nu=H\geq 0$ we obtain   
$$
|\D K'\cap V|-|\D K_t\cap V|\geq \int_{\D K'\cap V}\langle\nu,\nu_{\D K'}\rangle
-\int_{\D K_t\cap V}\langle\nu,\nu_{\D K_t}\rangle
$$
$$
=\int_{(K'\setminus K_t)\cap V}\Div\nu\geq 0.
$$
Now in our situation, one can take as comparison domain $K'=K_t^j\cup (\bar{B}(x,r)\cap \{x_N\leq \delta\})$ for $\delta>0$ small, and this gives (\ref{eqn_densitybound}).
\end{remark}

\subsection{Proof of the curvature estimate}\label{subsec_curv_est}

We can now give a short proof of the curvature estimate (Theorem \ref{thm-intro_local_curvature_bounds}), assuming smoothness and admissibility.
For clarity, this means that we prove the following:

\begin{theorem18p}
For all $\al>0$ there exist $\rho=\rho(\al)>0$ and $C_\ell=C_\ell(\al)<\infty$
$(\ell=0,1,2,\ldots)$ with the following property. If $\K$ is a smooth $\al$-Andrews flow, and $P(p,t,r)$ is a parabolic ball centered at a boundary point $p\in \D K_t$, such that $\K$ is defined in $P(p,t,r)$, some time slice $K_{\bar t}$ contains $B(p,r)$, and $H(p,t)\leq r^{-1}$, then
\begin{equation}\label{eqn-sec2_curvature_estimate}
 \sup_{P(p,t,\rho r)}\abs{\nabla^\ell A}\leq C_\ell r^{-(\ell+1)}\, .
\end{equation}
\end{theorem18p}

\begin{proof}
We will first show that there exists a $\rho'>0$ such that the estimate (\ref{eqn-sec2_curvature_estimate}) holds for $\ell=0$ with $C_0=\frac{1}{\rho'}$.

Suppose this doesn't hold. Then there are sequences of $\al$-Andrews flows $\{\K^j\}$, boundary points $\{p_j\in \D K_{t_j}\}$ and scales $\{r_j\}$,
such that $\K^j$ is defined in $P(p_j,t_j,r_j)$, some time slice contains $B(p_j,r_j)$, and $H(p_j,t_j)\leq r_j^{-1}$, but
$\sup_{P(p_j,t_j,j^{-1}r_j)}|A|\geq jr_j^{-1}$.

After parabolically rescaling by $(j^{-1}r_j)^{-1}$ and applying an isometry, we
obtain a new sequence $\{\hat \K^j\}$ of $\al$-Andrews flows such that:
\begin{enumerate}
\renewcommand{\theenumi}{\alph{enumi}}
\item $\hat \K^j$ is defined in $P(0,0,j)$ and some time slice contains $B(0,j)$.
\item $0\in \D \hat K^j_0$ and the outward normal of $\hat K^j_0$ at $(0,0)$ 
is $e_N$.
\item $H_{\D\hat K^j_0}(0,0)\ra 0$ as $j\ra\infty$.
\item $\sup_{P(0,0,1)}|A|\geq 1$.
\end{enumerate}
By (a), (b), (c) and the $\al$-Andrews condition, $\{\hat\K^j\}$ satisfies the assumptions (1'), (2) and (3) of 
Theorem \ref{thm-halfspace_smooth_convergence}, and hence it converges
smoothly on compact subsets of spacetime to a static halfspace; this
contradicts (d).

Finally, by standard derivative estimates (see e.g. \cite[Prop. 3.22]{Ecker_book}), we get uniform bounds on all scale invariant
derivatives
of $A$ in $P(p,t,\frac{\rho}{2} r)$. Setting $\rho=\frac{\rho'}{2}$, the theorem follows.
\end{proof}

As an immediate consequence of Theorem 1.8', we obtain the gradient estimate of Huisken-Sinestrari \cite[Thm. 1.6]{huisken-sinestrari3}:

\begin{corollary}\label{cor_HS_gradient_estimate}
Suppose $\K$ is a smooth mean convex flow, where the initial time slice is
compact. Then
\begin{equation}
 \abs{\nabla A}\leq CH^2
\end{equation}
for a constant $C<\infty$ depending only on the initial time slice.
\end{corollary}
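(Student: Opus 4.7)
The plan is to apply the curvature estimate (Theorem 1.8') at scale $r = H(p,t)^{-1}$, which will give $|\nabla A|(p,t) \leq C_1(\al)\, H(p,t)^2$. First I observe that since $K_0$ is smooth, compact, and strictly mean convex, it satisfies the $\al$-Andrews condition for some $\al = \al(K_0) > 0$, so Andrews' theorem yields that the whole flow $\K$ is $\al$-Andrews. Moreover, the maximum principle applied to the evolution equation $\D_t H = \De H + |A|^2 H$ shows that $\min_{\D K_t} H$ is non-decreasing in $t$, so $H \geq H_0 := \min_{\D K_0} H > 0$ everywhere in the smooth flow.

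For any $(p,t)$ in the smooth flow with $t \geq H_0^{-2}$, I set $r = H(p,t)^{-1}$ and verify the three hypotheses of Theorem 1.8'. The bound $H(p,t) \leq r^{-1}$ holds with equality, and $t \geq H_0^{-2} \geq H(p,t)^{-2} = r^2$ ensures that $\K$ is defined in $P(p,t,r)$. For the admissibility hypothesis I take $\bar t = 0$: by a barrier argument using the preserved bound $H \geq H_0$ (equivalently, via $|\nabla T| = 1/H \leq 1/H_0$ for the arrival-time function $T$ on $K_0$), any $p \in K_t$ satisfies $\dist(p, \D K_0) \geq H_0 t \geq H_0^{-1} \geq r$, so $B(p, r) \subseteq K_0$. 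Theorem 1.8' then yields $|\nabla A|(p,t) \leq C_1(\al)\, r^{-2} = C_1(\al)\, H(p,t)^2$.

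For the remaining times $t < H_0^{-2}$, the flow is smooth on the bounded interval $[0, \min(H_0^{-2}, T_{\max})]$, and standard short-time estimates for smooth mean curvature flow with smooth compact initial data bound $|\nabla A|$ there by a constant $\La = \La(K_0)$; combined with $H \geq H_0$ this yields $|\nabla A| \leq (\La/H_0^2)\, H^2$. Together the two regimes give $|\nabla A| \leq C H^2$ with $C = \max\{C_1(\al(K_0)),\ \La(K_0)/H_0^2\}$ depending only on $K_0$. The main obstacle is verifying the admissibility hypothesis of Theorem 1.8', since choosing $\bar t = 0$ requires a lower bound on $\dist(p, \D K_0)$; the preserved strict mean convexity $H \geq H_0$ provides precisely what is needed.
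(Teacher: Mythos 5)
Your overall strategy — apply the curvature estimate at scale $r = H(p,t)^{-1}$ — is the intended one (the paper offers no details, calling the corollary an ``immediate consequence'' of Theorem~1.8'), and your verification of the admissibility hypothesis via the $1/H_0$-Lipschitz bound on the arrival time is a clean idea. However, there is a genuine gap in your treatment of the regime $t < H_0^{-2}$, and it concerns precisely the case $T_{\max}\le H_0^{-2}$ (which does occur; e.g.\ for a round circle in $\R^2$ one has $T_{\max}=R^2/2 < R^2 = H_0^{-2}$, and one can build strictly mean convex neckpinch examples for which $T_{\max}\ll H_0^{-2}$). In that case your interval $[0,\min(H_0^{-2},T_{\max})]$ is $[0,T_{\max}]$, but the flow is \emph{not} smooth at $t=T_{\max}$: the curvature, and in general $|\nabla A|$, blows up as $t\to T_{\max}^-$. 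So the claimed bound $|\nabla A|\le\La(K_0)$ on that interval is not available from ``standard short-time estimates'' or from compactness, and Case~A (as you have set it up, requiring $t\ge H_0^{-2}$) never applies, so the argument gives no bound at all near a singularity with $T_{\max}\le H_0^{-2}$.

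The fix requires a finer split. Choose a threshold $T_1$ depending only on $K_0$ with $0<T_1<T_{\max}$, e.g.\ $T_1=\tfrac12\min(H_0^{-2},T_{\max})$. On $[0,T_1]$ the flow is smooth on a genuine compact interval, so $|\nabla A|\le\La(K_0)$ and $H\ge H_0$ give $|\nabla A|\le(\La/H_0^2)H^2$. For $t>T_1$, set $r=\min(\sqrt{T_1},H(p,t)^{-1})\le\sqrt t$ and apply the curvature estimate: if $H\ge T_1^{-1/2}$ then $r=H^{-1}$ and $|\nabla A|\le C_1H^2$; if $H<T_1^{-1/2}$ then $r=\sqrt{T_1}$ and $|\nabla A|\le C_1/T_1\le\bigl(C_1/(T_1H_0^2)\bigr)H^2$. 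Note that in this second branch the choice $\bar t=0$ may fail to verify the admissibility hypothesis when $T_1<H_0^{-2}$ (your bound gives $\dist(p,\D K_0)\ge H_0T_1$, which need not exceed $\sqrt{T_1}$), so for the clean version of this argument one should invoke the non-admissible form of the curvature estimate, Theorem~\ref{thm-intro_local_curvature_bounds}, which the paper records is valid (Remark~\ref{rem_remove_admiss} and Appendix~\ref{app_remove_admissibility}) and sidesteps the admissibility issue entirely.
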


\begin{remark}
\label{rem-forward_extension}
One may obtain a variant of the curvature estimate by considering 
flows which are defined in $B(p,r)\times (t-r^2,t+\tau r^2]$ for some
fixed $\tau>0$, in which case the curvature bound holds in a suitable parabolic
region extending forward in time. The proof is similar.
\end{remark}

\begin{remark}\label{rem_remove_admiss}
The assumption that some time slice $K_{\bar t}$ contains $B(p,r)$ actually can be removed again everywhere, see Appendix \ref{app_remove_admissibility}.
\end{remark}

\begin{remark}\label{rem_dropping}
Theorem \ref{thm-intro_local_curvature_bounds} still holds for flows satisfying the Andrews condition, where one allows that at finitely many times some connected components are discarded. Since dropping components has the good sign in Huisken's monotonicity inequality, the same proof applies.
\end{remark}

\subsection{Proof of the convexity estimate}

Based on the curvature estimate, we can now give a short proof of the convexity estimate.

\begin{proof}[Proof of Theorem \ref{thm-intro_convexity_estimate} (smooth case)]
Our proof, like Brian White's proof of the convexity estimate in \cite{white_nature},
is based on rigidity in the equality case of the maximum principle for the quantity
$\frac{\la_1}{H}$.

Fix $\al $.  The $\al$-Andrews condition implies that the assertion holds for $\eps = \frac1\al $. 
Let $\eps_0\leq \frac1\al $ be the infimum of the $\eps$'s for which it holds, 
and suppose $\eps_0>0$.

It follows that there is a sequence $\{\K^j\}$ of $\al$-Andrews flows, 
where for all $j$, $(0,0)\in \D \K^j$, $H(0,0)\leq 1$ and $\K^j$ is defined in 
$P(0,0,j)$, but ${\la_1}(0,0)\to -\eps_0$ as $j\ra \infty$.
After passing to a subsequence, 
$\{\K^j\}$  converges smoothly to a mean curvature flow $\K^\infty$ in the parabolic
ball $P(0,0,\rho)$, where $\rho=\rho(\al)$ is the quantity from Theorem
\ref{thm-intro_local_curvature_bounds} (respectively the quantity of Theorem 1.8' for readers only interested in the admissible version of the convexity estimate). Then for $\K^\infty$ we have $\lambda_1(0,0)=-\eps_0$ and thus $H(0,0)=1$.

By continuity $H>\frac12$
in  $P(0,0,r)$ for some $r\in (0,\rho)$.  Furthermore we have $\frac{\la_1}{H}\geq -\eps_0$
everywhere in $P(0,0,r)$.  This is because   every $(p,t)\in \D\K^\infty\cap P(0,0,r)$
is a limit of a sequence $\{(p_j,t_j)\in  \D \K^j\}$ of boundary 
points, and  for every $\eps > \eps_0$, if 
$\eta=\eta(\eps,\al)$, then for 
large $j$, $\K^j$ is defined in $P(p_j,t_j,\eta H^{-1}(p_j,t_j))$,
which implies that 
the ratio $\frac{\la_1}{H}(p_j,t_j)$ is bounded below by $-\eps$. 
Thus, in the parabolic ball $P(0,0,r)$,  the ratio $\frac{\la_1}{H}$ attains a 
negative minimum $-\eps_0$ at $(0,0)$. Since $\lambda_1<0$ and $\lambda_{N-1}>0$ the Gauss curvature $K=\lambda_1\lambda_{N-1}$ is strictly negative.
However, by the equality case of the maximum principle (see also Appendix \ref{app_preserved_curv}), the hypersurface locally splits as a product and thus this Gauss curvature must vanish; a contradiction.
\end{proof}

As an immediate consequence of (the smooth, admissible version of) the convexity estimate, we obtain the original versions of the convexity estimates due to Huisken-Sinestrari \cite[Thm. 1.4]{huisken-sinestrari3} and White \cite[Thm. 8]{white_nature}:

\begin{corollary}\label{cor-intro_convexity_estimates}
Suppose $\K$ is a smooth mean convex flow, where the initial time slice is
compact. Then for all $\eps>0$ there is an $H_0<\infty$ such that
if $H(p,t)\geq H_0$ then $\frac{\la_1}{H}(p,t)\geq -\eps$.
\end{corollary}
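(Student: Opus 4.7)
The plan is to reduce directly to Theorem \ref{thm-intro_convexity_estimate} by setting $r=1/H(p,t)$, and then arranging that the resulting parabolic ball sits in a portion of the flow which is uniformly $\alpha$-Andrews for some $\alpha>0$.

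First I would establish a uniform Andrews constant after a short initial delay. Since $M_0$ is compact, smooth, and mean convex, the strict maximum principle applied to $\D_t H=\Delta H+|A|^2 H$ gives $H>0$ on $M_{t_0}$ for any sufficiently small $t_0>0$; by compactness, $K_{t_0}$ then satisfies the $\alpha$-Andrews condition for some $\alpha=\alpha(\K,t_0)>0$, and Andrews' theorem propagates this forward so that $\{K_t\}_{t\geq t_0}$ is $\alpha$-Andrews. Standard short-time estimates furnish an $H_1<\infty$ with $H\leq H_1$ on $[0,t_0+1]$.

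Next, given $\eps>0$, let $\eta=\eta(\eps,\alpha)$ be the constant produced by Theorem \ref{thm-intro_convexity_estimate}, and set $H_0:=\max(H_1,\eta)+1$. For any boundary point $(p,t)\in\D\K$ with $H(p,t)\geq H_0$, we must have $t>t_0+1$, since $H\leq H_1<H_0$ on $[0,t_0+1]$. Taking $r:=1/H(p,t)$, the backwards parabolic ball $P(p,t,\eta r)$ has temporal extent $(\eta r)^2\leq(\eta/H_0)^2<1$, and hence is contained in $\R^N\times[t_0,\infty)$, which lies in the $\alpha$-Andrews part of the flow. Applying Theorem \ref{thm-intro_convexity_estimate} yields $\lambda_1(p,t)\geq-\eps r^{-1}=-\eps H(p,t)$, i.e.\ $\lambda_1/H(p,t)\geq-\eps$, as claimed.

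The only mild obstacle is the bookkeeping with $H_0$: one must pick it large enough both to exclude the initial waiting window (so that the $\alpha$-Andrews condition is in force at time $t$) and to ensure that the backwards parabolic ball of radius $\eta/H_0$ stays inside that regime. The choice $H_0=\max(H_1,\eta)+1$ handles both requirements at once, and nothing deeper than Theorem \ref{thm-intro_convexity_estimate} together with the maximum principle is needed.
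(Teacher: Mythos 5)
Your proposal follows essentially the route the paper intends: the paper simply declares the corollary an ``immediate consequence'' of Theorem \ref{thm-intro_convexity_estimate} without supplying details, and waiting a short time to produce a uniform Andrews constant before invoking the convexity estimate at scale $r=1/H(p,t)$ is exactly how one fills that in. The reduction is correct, and the threshold-bookkeeping for $H_0$ is the right device.

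There is one genuine slip, however. The claim that ``standard short-time estimates furnish an $H_1<\infty$ with $H\leq H_1$ on $[0,t_0+1]$'' tacitly assumes the flow remains smooth up to time $t_0+1$. A compact mean convex flow can reach its first singular time $T$ well before $t_0+1$, in which case $H$ is \emph{not} bounded on $[0,\min(t_0+1,T))$ and the deduction ``$H(p,t)\geq H_0$ implies $t>t_0+1$'' becomes vacuous --- there are points with $H(p,t)\geq H_0$ but $t<t_0+1$, as $t\to T^-$. The fix is small: choose $t_0 < T/2$, take $H_1$ to bound $H$ on $[0,2t_0]$ (legitimate, since the flow is smooth and compact there), and set $H_0=\max\bigl(H_1,\eta/\sqrt{t_0}\bigr)+1$. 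Then $H(p,t)\geq H_0 > H_1$ forces $t>2t_0$, and the backwards parabolic ball $P(p,t,\eta/H(p,t))$ has temporal extent $\leq(\eta/H_0)^2<t_0<t-t_0$, so it lies inside $\R^N\times[t_0,T)$, the $\alpha$-Andrews portion of the flow. The rest of your argument then goes through verbatim.
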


\begin{corollary}\label{cor-intro_limit_flows_convex}
Every special limit flow of a mean convex flow has nonnegative second fundamental form at all its regular points.\footnote{For the moment we only obtain the statement for special limit flows, i.e. limit flows up to the first singular time. Later times will be discussed later.}
\end{corollary}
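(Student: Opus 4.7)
The plan is to reduce this to a direct application of the convexity estimate (Theorem \ref{thm-intro_convexity_estimate}, or equivalently the smooth admissible form Theorem 1.8') along the defining rescaling sequence of the limit flow. Let $\K$ be a smooth compact mean convex flow; by Andrews' theorem $\K$ is $\al$-Andrews for some $\al>0$. A special limit flow $\K^\infty$ arises as a Hausdorff limit of parabolic rescalings $\K^j$ of $\K$ by factors $\la_j\to\infty$ centered at boundary points $(p_j,t_j)$ strictly before the first singular time of $\K$. Since the class of $\al$-Andrews flows is closed under parabolic rescaling and Hausdorff limits (Definition \ref{def_andrews_flows}), $\K^\infty$ is again $\al$-Andrews; moreover, because $\la_j\to\infty$, $\K^\infty$ extends over arbitrarily large backward spacetime regions around every fixed point.

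Now fix a regular point $(p,t)\in\D\K^\infty$, near which $\K^\infty$ is smooth and the convergence $\K^j\to\K^\infty$ is smooth. Pick corresponding boundary points $(p_j,t_j)\in\D K^j_{t_j}$ with $(p_j,t_j)\to(p,t)$, $H^j(p_j,t_j)\to H(p,t)$, and $\la_1^j(p_j,t_j)\to \la_1(p,t)$. In the main case $H(p,t)>0$, set $r_j=1/H^j(p_j,t_j)\to r=1/H(p,t)$. Given $\eps>0$, let $\eta=\eta(\eps,\al)$ be as in Theorem \ref{thm-intro_convexity_estimate}. For large $j$ the smooth flow $\K^j$ is defined in $P(p_j,t_j,\eta r_j)$ and satisfies the admissibility hypothesis by Remark \ref{rem_admiss_always_holds}, so Theorem 1.8' yields $\la_1^j(p_j,t_j)\geq -\eps H^j(p_j,t_j)$. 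Passing $j\to\infty$ and then $\eps\to 0$ gives $\la_1(p,t)\geq 0$, whence $A\geq 0$ at $(p,t)$. In the remaining case $H(p,t)=0$, the viscosity $\al$-Andrews condition (preserved under Hausdorff limits) produces interior and exterior tangent balls at $p$ of radius $\al/H(p,t)=\infty$, so $K^\infty_t$ agrees with a closed halfspace in a neighborhood of $p$ and $A(p,t)=0$ trivially.

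The main thing to verify is the applicability of the convexity estimate in the form already proved, i.e.\ its smooth admissible version; this is ensured by Remark \ref{rem_admiss_always_holds} because blow-up sequences are automatically admissible. Everything else is either built into the definition of the $\al$-Andrews class (closure under rescaling and Hausdorff limits, persistence of the Andrews condition) or is a harmless passage to the limit followed by $\eps\to 0$. I do not expect a serious obstacle: this corollary should be a genuinely short consequence of the convexity estimate, entirely parallel to Corollary \ref{cor-intro_convexity_estimates}, with the rescaling sequence replaced by the sequence of rescalings defining the limit flow.
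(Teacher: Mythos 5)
Your proof is correct and is exactly what the paper intends when it states that the corollary is an immediate consequence of the smooth, admissible version of the convexity estimate applied along the blow-up sequence, with admissibility supplied by Remark \ref{rem_admiss_always_holds}. One minor slip in nomenclature: you refer to the smooth admissible convexity estimate as ``Theorem 1.8$'$,'' but Theorem 1.8$'$ is the smooth admissible \emph{curvature} estimate; what you are actually invoking is the smooth admissible version of Theorem \ref{thm-intro_convexity_estimate}, whose proof in Section \ref{sec-halfspace_and_consequences} relies on Theorem 1.8$'$ but is a distinct statement.
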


Another consequence is the convexity of ancient $\al$-Andrews flows. This is analogous to the theorem that ancient Ricci flows in dimension three have nonnegative curvature operator \cite{Hamilton_survey,chen_uniqueness}.

\begin{corollary}\label{cor-intro_ancient}
If $\{K_t\subset \R^N\}_{t\in(-\infty,T)}$ is an ancient $\al$-Andrews flow, then $K_t$ is convex for all $t\in (-\infty,T)$.
\end{corollary}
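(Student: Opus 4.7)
The plan is to derive nonnegative second fundamental form at every smooth boundary point from the convexity estimate (Theorem \ref{thm-intro_convexity_estimate}), and then upgrade this pointwise information to convexity of the set $K_t$.

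First, fix $t_0 \in (-\infty, T)$ and consider any smooth boundary point $(p, t_0) \in \D \K$, that is, one with $H(p, t_0) < \infty$. Set $r := H(p, t_0)^{-1}$. Because $\K$ is ancient and spatially defined on all of $\R^N$, the parabolic ball $P(p, t_0, \eta r)$ is contained in the domain of $\K$ for every $\eta < \infty$. Applying Theorem \ref{thm-intro_convexity_estimate} with this $r$ and $\eta = \eta(\eps, \al)$ for arbitrary $\eps > 0$ gives
$$
\lambda_1(p, t_0) \geq -\eps\, H(p, t_0).
$$
Letting $\eps \downarrow 0$, we conclude $\lambda_1(p, t_0) \geq 0$ at every smooth boundary point.

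Next, I would note that by Theorem \ref{thm-intro_local_curvature_bounds}, every boundary point with finite mean curvature is a regular point of the flow, and the $\al$-Andrews condition furnishes uniform interior and exterior tangent balls at all such points. Hence the smooth part of $\D K_{t_0}$ is an open, $C^{1,1}$-regular piece of hypersurface on which $\lambda_1 \geq 0$. To finish, I would globalize: at each smooth boundary point the Andrews exterior ball and $\lambda_1 \geq 0$ together imply that $K_{t_0}$ is locally convex (locally contained in a supporting halfspace), and by a classical Tietze-type result a closed connected locally convex subset of $\R^N$ is convex; for disconnected $K_{t_0}$ one argues componentwise.

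The main obstacle is this last globalization step: the convexity estimate a priori gives no information at boundary points where the viscosity mean curvature is infinite, and one must verify that such singular boundary points do not spoil convexity. I expect to handle them via the viscosity $\al$-Andrews condition, which says that each boundary point still admits tangent exterior balls (possibly of zero radius) and so lies in the closure of the supporting halfspaces coming from smooth points; equivalently, one can approximate from inside by the convex hull of the smooth locus and use that the latter is convex. This is the only non-routine piece; Steps 1 and 2 are essentially immediate consequences of the two estimates already established.
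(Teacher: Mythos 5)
Your Step 1 (invoking the convexity estimate with $\eps\to 0$ and the fact that an ancient flow is defined on all backward parabolic balls) and the globalization via local convexity are in line with the paper's argument. However, there is a genuine gap at the end: you write that ``for disconnected $K_{t_0}$ one argues componentwise,'' but a disconnected subset of $\R^N$ is never convex, so arguing componentwise cannot possibly establish the stated conclusion that $K_t$ itself is convex. Establishing that $K_t$ is connected is precisely the nontrivial part of the paper's proof, and your proposal does not address it.

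The paper proves connectedness by a comparison argument: pick $p\in K_T$ and call $K_t^p$ its component; if at some time there were another component $K'_t$, then going backward in time $K'_{\bar t}$ must remain disjoint from $K^p_{\bar t}$, hence its inscribed radius (and therefore, by the Andrews condition, its mean curvature scale) stays bounded, i.e.\ the extra component ``slows down.'' But then, again by the Andrews condition, the exterior balls attached to $K'_{\bar t}$ grow without bound as $\bar t\to-\infty$, and comparison with spheres would force these exterior balls to sweep away the principal component $K^p_{\bar t}$ --- a contradiction. Without some version of this argument, one cannot rule out, say, a flow with several convex pieces. (Intuitively: two disjoint shrinking spheres cannot be ancient, because backwards in time they would have to collide; it is exactly this intuition, made quantitative via the Andrews condition, that the paper uses.) Your remark about viscosity mean curvature at non-smooth boundary points is, by contrast, a non-issue here, since the corollary is first proved in the smooth setting and only later transferred to general $\al$-Andrews flows by the approximation machinery of Section~4.
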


\begin{remark}\label{remark_sweepout}
It follows from an elementary argument based on the Andrews condition that $\{K_t\}$ either is a static halfspace or it sweeps out all space, i.e. $\bigcup_{t\in(-\infty,T)} K_t=\R^N$, c.f. Lemma \ref{lemma_admissibility}.
\end{remark}

\begin{proof}[Proof of Corollary \ref{cor-intro_ancient} (smooth case)]
By Theorem \ref{thm-intro_convexity_estimate}, the boundary $\D K_t$ has positive semidefinite second fundamental form for every $t$. Thus, picking any $p\in K_{T}$, the connected component $K_t^{p} \subset K_t$ containing $p$ is convex.
We claim that there are no other connected components, i.e. $K_t^{p}=K_t$. Indeed, suppose for any $R<\infty$ there was another component $K'_t$ in $B(p,R)$.
Going backward in time, such a complementary component $K'_{\bar t}$ would have to stay disjoint from our principal component $K^p_{\bar t}$, and thus $K'_{\bar t}$ would have to slow down. But then the Andrews condition would 
clear out our principal component $K^p_{\bar t}$; a contradiction. 
\end{proof}

\section{Global convergence and consequences}\label{sec-global_convergence_and_consequences}

\subsection{Proof of the global convergence theorem}

In this section we give the proof of the global convergence theorem. This is the only proof that is longer than a single page and may be skipped at first reading.

\subsubsection{Preliminaries about convex sets}\label{subsec-convex_sets}
Suppose $C\subset \R^N$ is a convex set and $p\in C$.
For every $\la\geq 1$ let $C_{p,\la}=\la(C-p)=\{\la(x-p)\mid x\in C\}$.  The tangent cone $\cone_p(C)$ of $C$ at $p$  is defined as
$
\ol{\cup_{\la\geq 1}C_{p,\la}}\,.
$
By convexity, $\{C_{p,\la}\}$ is nested, i.e. $\la_1\leq \la_2$ $\implies$
$C_{p,\la_1}\subset C_{p,\la_2}$.  The family $\{C_{p,\la}\}$ converges
to $\cone_p(C)$ in the pointed Hausdorff topology, and likewise for their complements.  If $C$ has nonempty interior, then so does $\cone_p(C)$,
and  every compact subset  of the interior of $\cone_p(C)$ 
is contained in  the interior of $C_{p,\la}$ for $\la$ sufficiently large.  

If $C\subset \R^N$ is a closed convex set with nonempty interior, then 
$C$ is a topological manifold with boundary, and $\D C$ is locally the graph of a Lipschitz function.
In particular, almost every $p\in \D C$ with respect to 
$(N-1)$-dimensional Hausdorff measure is a point of 
differentiability, and at any such point the tangent
cone $\cone_p(C)$ is a halfspace.

\subsubsection{Steps of the proof}
We recommend that the reader now recalls the outline of the proof from the introduction.
Our proof has seven steps.
Step 1 describes the setup for proving the key curvature bounds. The core of the argument is contained in Step 2 and 3. The curvature bounds are eventually obtained in Step 5, and essentially they are what is needed to conclude the argument in Step 7.
Step 4 and also some parts of Step 1, 6 and 7 deal with some technical issues caused by the a priori possibility of having more than one connected component.

\begin{proof}[Proof of Theorem \ref{thm-intro_h_gives_global_convergence} (smooth case)] 

Since the hypotheses and conclusions are invariant under parabolic rescaling, 
we may assume without loss of generality that $(p_j,t_j)=(0,0)$ and
$H(p_j,t_j)=H(0,0)\leq 1$ for all $j$.\\

\noindent{\em Step 1.  Setup for controlling $H(x)$ as a function of $d(x,0)$.}

For all $R\in (0,\infty)$, let $X^j_R$ be the connected component of 
$ K^j_0\cap B(0,R)$ containing $0\in \R^N$.  Let $R_0$
be the supremum of the numbers $R>0$ such that there is a $C=C(R)$
with $H\leq C$ in $\D X^j_R\subset B(0,R)$ for large $j$;
note that
$R_0>0$ by Theorem \ref{thm-intro_local_curvature_bounds}.

Suppose $R_0<\infty$.   After passing to a subsequence, we may assume that
\begin{equation}
\label{eqn-blowup_at_r0}
\lim_{j\ra\infty}\left(\sup_{\D X^j_R}H  \right)=\infty
\end{equation}
for all $R>R_0$.

By the definition of $R_0$ and 
Theorem \ref{thm-intro_local_curvature_bounds},
after passing to a subsequence,
we may assume that  $\{X^j_{R_0}\}$ converges smoothly on compact subsets
of $B(0,R_0)$ to a domain with smooth boundary $X^\infty\subset B(0,R_0)$.
In fact, for every $x\in X^\infty$, there is an $r>0$ such that 
the convergence $X^j_{R_0}\ra X^\infty$
extends to a backward parabolic ball $P(x,0,r)$. 

Let  $X\subset X^\infty$ be the connected component of $X^\infty$ containing
$0$.\footnote{Although $X^j_{R_0}$ is connected, 
since the convergence is only on compact
 subsets of $B(0,R_0)$, a priori $X^\infty$ might not be connected.}
Then $X$ is convex, because the second fundamental form of $\D X^\infty$
is positive semidefinite, by Theorem \ref{thm-intro_convexity_estimate}.
Hence the closure $\ol{X}\subset \ol{B(0,R_0)}$
is a compact convex set with nonempty interior.\\

\noindent{\em Step 2. Controlling the curvature of $\D X$ near the 
sphere $S(0,R_0)$:  For all 
$q\in\overline{\partial X}\cap S(0,R_0)$ we have 
$\inf\{H(x)d(x,q)| x\in\partial X\}=0$.}

Suppose there is a point  $q$ in the sphere $S(0,R_0)$ 
lying in the closure $\ol{\D X}\subset \ol{B(0,R_0)}$, such that
\begin{equation}
\label{eqn-dh_bounded_below}
\inf\{H(x)\,d(x,q)\mid x\in \D X\}>0\,.
\end{equation}
Let $X_1$ be the tangent cone of the convex set $\ol{X}$ at $q$.

Suppose $X_1$ were a halfspace.  Then  $X_1$ would coincide with the
halfspace $Y=\{y\in \R^N\mid \langle y,q\rangle \leq 0\}$.  Also, for any 
sequence $x_k\in \D X$ with $x_k\ra q$, and any $\Lambda<\infty$, by using the
fact that $\frac{1}{d(x_k,q)}(X-q)$  pointed Hausdorff converges to the halfspace $X_1=Y$,
for large $k$ we could find a point $x_k'\in B(q,2d(x_k,q))\cap \D X$ 
which has interior contact
with a ball of radius at least $\Lambda\, d(x_k,q)$, contradicting (\ref{eqn-dh_bounded_below}).
Therefore $X_1$ is not a halfspace.

Now we may choose  a point
$q_1\in \D X_1\setminus \{0\}$ on the 
boundary $\D X_1$ which lies in the interior of the halfspace $Y$, i.e. 
$q_1\in \D X_1\setminus
\D Y$, such that the tangent cone of $X_1$ at $q_1$ is a halfspace;
 see preliminaries.  

By Theorem \ref{thm-intro_local_curvature_bounds} the point $q_1$ is a smooth point of $X_1$, and  for some $r>0$ 
the intersection $X_1\cap B(q_1,r)$ can be extended
to a smooth mean curvature flow $\hat\K^\infty$ in a backward parabolic ball
$P(q_1,0,r)$, which is a smooth limit of rescalings $\{\hat{\K}^j\}$ of a subsequence of $\{\K^j\}$.
Theorem \ref{thm-intro_convexity_estimate} implies that $\hat \K^\infty$ has convex time slices, and (\ref{eqn-dh_bounded_below}) yields $H(q_1,0)>0$.

Recapping, we have a smooth convex mean curvature flow $\hat\K^\infty$ whose final time slice is part of a nonflat cone;
this contradicts the equality case of the maximum principle for $\frac{\lambda_1}{H}$, c.f. Appendix \ref{app_preserved_curv}.\\

\noindent{\em Step 3. Controlling the mean curvature of $K^j_0$ near $\ol{X}$: there exist
$\de>0$ and  $\bar H<\infty$ such that for large $j$
we have $H\leq\bar H$ in $\D K^j_0\cap N_\de(\ol{X})$,
where $N_\de(\ol{X})$ is the $\de$-neighborhood of $\ol{X}$ in $\R^N$.}

By compactness, it suffices to prove that every $q\in\ol{X}\cap S(0,R_0)$ has a neighborhood where the curvature is bounded.

If $q\in \ol{\D X}\cap S(0,R_0)$, by Step 2,
there is a sequence $\{x_k\}\subset \D X$ with
$x_k\ra q$ such that $H(x_k)d(x_k,q)\ra 0$.  
Putting $r_k=\frac{2}{\rho}d(x_k,q)$, where $\rho$ is the constant
from Theorem \ref{thm-intro_local_curvature_bounds}, we still have $H(x_k)r_k\ra 0$.
So for a fixed  sufficiently large $k$, by applying 
Theorem \ref{thm-intro_local_curvature_bounds} to the approximators $\K^j$ with
basepoints $p_j\in \D K^j_0$ converging to $x_k$, 
we obtain $r>0,C<\infty$ such that
for large $j$ we have $H\leq C$ in $ \D K^j_0\cap B(q,r)$.  

If $q\in (\ol{X}\cap S(0,R_0))
\setminus \ol{\D X}$, then there is a closed ball $\ol{B(x,\rho)}\subset \ol{X}$ tangent to
$\D \ol{X}$ at $q$.  Therefore, there is an $r\ll\rho$ such that for large $j$,
either  $B(q,r) \subset  K^j_0$ or $B(q,r)\cap \D K^j_0\neq\emptyset$
and there is a translate of $\ol{B(x,\rho)}$
with touches $\D K^j_0$ from the inside at some  point $y$ with $d(y,q)\ll\rho$,
 and hence
we get that  
 $\D K^j_0$ has $H\lesssim \rho^{-1}$ and has tangent space nearly parallel to 
 $T_qS(0,R_0)$
in $B(q,r)$, by Theorem \ref{thm-intro_local_curvature_bounds}.\\

\noindent {\em Step 4. If $R_1=R_0+\tau$ and $\tau\ll \bar H^{-1}$, then
$X^j_{R_1}\subset N_{2\tau}(\ol{X})$ for large $j$.}

If not, then 
for some large $j$, there is an $x\in X^j_{R_1}\setminus N_{2
\tau}(\ol{X})$.
By the definition of $X^j_{R_1}$, there is a path $\ga$ from $0$ to $x$
lying
in $K^j_0\cap B(0,R_1)$.  Without loss of generality we may assume
that 
$\ga \setminus \{x\}\subset N_{2\tau}(\ol{X})$, and $d(x,\ol{X})=2\tau$.
Let $\pi(x)\in \ol{X}$ be the point in $\ol{X}$ closest to $x$. Since $2\tau>\tau$ we must have $\pi(x)\in\ol{\D X}$.

Since $\D K^j_0$ has curvature bounded by $\bar{H}\ll \tau^{-1}$
in $N_\de(\ol{X})$,
after passing to a subsequence the intersection
$K^j_0\cap B(\pi(x),10\tau)$ will converge smoothly to a convex domain
with smooth boundary  $\tilde X\subset B(\pi(x),10\tau)$, which looks very
close to a halfspace. Note that $\tilde X$ equals $\ol{X}$ in the region where they are both defined.

If $\pi(x)\in \D X$, since $\pi(x)$ is the nearest point in $\ol{X}$,
the vector $x-\pi(x)$ must be a positive multiple of the outward unit normal $\nu$ of $\tilde X$ at $\pi(x)$; 
then the Andrews condition gives $x\notin \partial K^j_0$, a
contradiction.

If
$\pi(x)\in \ol{\D X}\setminus \D X$, an elementary geometric argument shows that we 
still have $\langle \frac{x-\pi(x)}{\|x-\pi(x)\|},\nu\rangle \geq c$ for some universal
constant $c>0$; then the Andrews condition gives the same contradiction as before.\\

\noindent{\em Step 5: Getting $R_0=\infty$ and universal curvature bounds $\bar H_R$.}

We have shown that for large $j$, we have $X^j_{R_1}\subset N_{2\tau}(\ol{X})
\subset N_\de(\ol{X})$, and $H\leq\bar H$ in $N_\de(\ol{X})$.  This contradicts
(\ref{eqn-blowup_at_r0}).  Therefore $R_0=\infty$.  

In fact, since our reasoning thus far applies to 
any sequence satisfying the hypotheses of the theorem, we may apply it
once again to see  that
for every $R<\infty$ there exists $\bar H_R<\infty$ which does not
depend on the sequence $\K^j$, such that (with the normalization
$H(0,0)\leq 1$)
\begin{equation}
\label{eqn-universal_h_bound}
\lim_{j\ra\infty}\left(\sup_{\D X^j_R}H  \right)\leq \bar H_R\, .
\end{equation}

\noindent{\em Step 6: Obtaining a global convex limit at time $0$.}

Now we may pass to a subsequence of $\{\K^j\}$
such that  $\{X^j_R\}$ has a smooth limit $X^\infty_R$  for all $R>0$, and 
let $X_R$ be the connected component of $X^\infty_R$ containing $0$.

Arguing as in Step 4,
we  conclude that  $X^j_{R}\subset X^j_{R+\tau}\subset N_{2\tau}(\ol{X}_R)$,
for $\tau>0$ sufficiently small, and large $j$.  
Letting  $\tau\ra 0$ gives  $X^\infty_R\subset X_R$.  Therefore
$X_R=X^\infty_R$ for all $R$. 

Suppose $R_1\leq R_2$.  Then since $X^j_{R_1}\subset X^j_{R_2}$ for all $j$,
we have  $X^\infty_{R_1}\subset X^\infty_{R_2}\cap B(0,R_1)$.  On the other hand,
if $x\in X^\infty_{R_2}\cap B(0,R_1)$ then by the convexity of $X^\infty_{R_2}$, 
the radial segment $\ol{0x}$ is contained in $X^\infty_{R_2}\cap B(0,R_1)$, and 
small perturbations of $\ol{0x}$ will yield paths in $X^j_{R_1}$, for large $j$,
showing that $x\in X^\infty_{R_1}$.  Thus $X^\infty_{R_2}\cap B(0,R_1)=
X^\infty_{R_1}$.  Hence 
the union $X^\infty=\cup_{R}\,X^\infty_R$ is a smooth, closed, convex domain of $\R^N$.\\

\noindent{\em Step 7: Global convergence to a convex $\al$-Andrews flow.}

For $t\leq 0$, let  $X^j_{R,t}$ be the connected component of $K^j_t\cap B(0,R)$
containing $0$.
By comparison with shrinking spheres we can find points at controlled distance with controlled mean curvature. Thus (\ref{eqn-universal_h_bound}) generalizes to
\begin{equation}
\label{eqn-universal_h_bound2}
\lim_{j\ra\infty}\left(\sup_{\D X^j_{R,t}}H  \right)\leq f(R,t)\, .
\end{equation}
for some continuous function $f$.

Applying Steps 1--6 at each nonnegative rational time, and passing to a subsequence, there are convex sets $K^\infty_t$
such that the domains $X^j_{R,t}$ converge smoothly to 
$K^\infty_t\cap B(0,R)$ as $j\ra\infty$, for all $R<\infty$ and all $t\in\Q\cap (-\infty,0]$.

Let $\K^\infty$ be the closure of  $\cup_{t\in \Q\cap(-\infty,0]}\; K^\infty_t\times \{t\}
\subset \R^N\times (-\infty,0]$. From (\ref{eqn-universal_h_bound2}) it follows that $\K^\infty$ is a smooth mean curvature flow and that the convergence is for all nonnegative times and not just the rational ones.

Arguing as in the proof of Corollary \ref{cor-intro_ancient} we see that potential other connected components of $K_t^j\cap B(0,R)$ are cleared out, and consequently $\K^j\to \K^\infty$ smoothly on compact subsets of $\R^N\times (-\infty,0]$.

Finally, applying Theorem \ref{thm-intro_local_curvature_bounds} and the Andrews condition to the approximators we see that $\K^\infty$ is either a static halfspace or has strictly positive mean curvature everywhere, and it follows that $\K^\infty$ is an $\al$-Andrews flow with convex time slices.
\end{proof}

\begin{remark}
Theorem \ref{thm-intro_h_gives_global_convergence} still holds in the more general setting of Remark \ref{rem_dropping}. In fact, by Step 7 of the proof all potential other components that might be discarded are eventually cleared out.
\end{remark}

The global convergence theorem immediately implies (and quickly follows from) its companion curvature estimate.

\begin{corollary}[Curvature estimate II]
\label{thm-more_global_parabolic_bounds}
For all $\al>0$ and $\Lambda<\infty$, there exist 
$\eta=\eta(\al,\Lambda)<\infty$ and $C_\ell=C_\ell(\al,\Lambda)<\infty$ $(\ell =0,1,2,\ldots)$ such that if $\K$ is an $\alpha$-Andrews flow
in a parabolic ball $P(p,t,\eta r)$ centered at a boundary point $p\in \D K_t$ with $H(p,t)\leq r^{-1}$, then $\K$ is smooth in $P(p,t,\Lambda r)$ and
\begin{equation}\label{glob_der_est}
\sup_{P(p,t,\Lambda r)}|\nabla^\ell A|\leq C_\ell r^{-(\ell+1)}\qquad (\ell=0,1,2,\ldots).
\end{equation}
\end{corollary}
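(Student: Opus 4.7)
The plan is to deduce Corollary 3.2 from Theorem \ref{thm-intro_h_gives_global_convergence} by a short contradiction-and-rescaling argument; the corollary is essentially a quantitative restatement of the qualitative $C^\infty_{\loc}$ convergence already provided by that theorem.

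Fix $\alpha$ and $\Lambda$, and suppose for contradiction that no finite $\eta$ works even in the $\ell=0$ case. Then we produce a sequence of $\alpha$-Andrews flows $\K^j$, boundary points $(p_j,t_j)\in\D K^j_{t_j}$, and scales $r_j>0$, such that $\K^j$ is defined in $P(p_j,t_j,j r_j)$ with $H(p_j,t_j)\leq r_j^{-1}$, yet $\K^j$ either fails to be smooth in $P(p_j,t_j,\Lambda r_j)$ or satisfies $\sup_{P(p_j,t_j,\Lambda r_j)}|A|>j r_j^{-1}$. Parabolic rescaling by $r_j^{-1}$ at $(p_j,t_j)$ produces a sequence $\hat\K^j$ of $\alpha$-Andrews flows with $H(0,0)\leq 1$ and defined in $P(0,0,j)$, which satisfies the hypotheses of Theorem \ref{thm-intro_h_gives_global_convergence} with normalization $r=1$. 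Passing to a subsequence, that theorem yields a $C^\infty_{\loc}$ limit $\hat\K^\infty$ on $\R^N\times(-\infty,0]$ which is a smooth $\alpha$-Andrews flow with convex time slices; in particular $\hat\K^\infty$ is smooth on the compact parabolic ball $\overline{P(0,0,\Lambda)}$ with an $|A|$-bound there depending only on $\alpha$ and $\Lambda$. Smooth convergence forces $\hat\K^j$ to share this behavior for $j$ large, contradicting the assumed failure; undoing the rescaling yields $\eta(\alpha,\Lambda)$ and $C_0(\alpha,\Lambda)$.

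The higher-order bounds $|\nabla^\ell A|\leq C_\ell r^{-(\ell+1)}$ then follow from Theorem \ref{thm-intro_local_curvature_bounds}: having established $H\leq C_0 r^{-1}$ and smoothness on $P(p,t,\Lambda r)$, we apply Theorem \ref{thm-intro_local_curvature_bounds} at each boundary point there at the scale $\sim r/C_0$, obtaining bounds on $|\nabla^\ell A|$ on parabolic balls of definite size. Replacing $\Lambda$ by $\Lambda+1$ in the $\ell=0$ step (and thus enlarging $\eta$ slightly at the outset) ensures that these smaller balls cover all of $P(p,t,\Lambda r)$, giving the required estimates. Since all the real analytic work is already carried out in Theorem \ref{thm-intro_h_gives_global_convergence}, no genuine obstacle remains; the only book-keeping is to verify that the rescaled sequence satisfies the hypotheses of Theorem \ref{thm-intro_h_gives_global_convergence}, which is immediate from the construction.
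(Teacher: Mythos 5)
Your proposal is correct and fills in exactly the argument the paper has in mind; the paper simply asserts that Corollary \ref{thm-more_global_parabolic_bounds} ``immediately follows from'' Theorem \ref{thm-intro_h_gives_global_convergence} without writing out the contradiction-and-rescaling, and your write-up supplies that. One small imprecision: you assert at one point that the $|A|$-bound on $\overline{P(0,0,\Lambda)}$ for the limit $\hat\K^\infty$ ``depends only on $\alpha$ and $\Lambda$,'' but what is immediate is only that $\hat\K^\infty$ is smooth, hence has \emph{some} finite bound there; the uniform constant $C_0(\alpha,\Lambda)$ is the conclusion of the contradiction argument, not an input to it, and your subsequent sentence does deliver that conclusion correctly. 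Also, since Theorem \ref{thm-intro_h_gives_global_convergence} already gives $C^\infty_{\loc}$ convergence, you could obtain the $\ell\geq 1$ bounds by running the identical contradiction with $|\nabla^\ell A|$ in place of $|A|$, which is slightly cleaner than invoking Theorem \ref{thm-intro_local_curvature_bounds} and covering; but your bootstrap via Theorem \ref{thm-intro_local_curvature_bounds} is also valid.
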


As another immediate consequence of the global convergence theorem, we obtain a Harnack inequality for $\al$-Andrews flows:

\begin{corollary}[Harnack inequality]\label{cor-intro_harnack}
For all $\al>0$ and $\Lambda<\infty$, there exists
$\eta=\eta(\al,\Lambda)<\infty$ such that if $\K$ is an $\alpha$-Andrews flow
and $p\in \D K_t$ is a boundary point such that $\K$ is
defined in $P(p,t,\eta H^{-1}(p,t))$, then
\begin{equation}
\eta^{-1} H(p,t) \leq H(p',t') \leq \eta H(p,t)
\end{equation}
for all boundary points $(p',t')$ in $P(p,t,\Lambda H^{-1}(p,t))$.
\end{corollary}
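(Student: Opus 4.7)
The plan is to prove this by a standard contradiction/compactness argument using the global convergence theorem (Theorem \ref{thm-intro_h_gives_global_convergence}). Suppose the conclusion fails for some $\al > 0$ and $\Lambda < \infty$: then I can extract sequences of $\al$-Andrews flows $\K^j$, base boundary points $(p_j, t_j) \in \D \K^j$ with $\K^j$ defined in $P(p_j, t_j, j\, H^{-1}(p_j, t_j))$, and test boundary points $(p_j', t_j') \in P(p_j, t_j, \Lambda H^{-1}(p_j, t_j)) \cap \D \K^j$, such that either $H(p_j', t_j')/H(p_j, t_j) \ra 0$ or $H(p_j', t_j')/H(p_j, t_j) \ra \infty$.

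First I would parabolically rescale by $H(p_j, t_j)$ and translate so that $(p_j, t_j) = (0, 0)$ and $H(0, 0) = 1$, setting $r_j \equiv 1$. The resulting flows $\hat{\K}^j$ satisfy the hypotheses of Theorem \ref{thm-intro_h_gives_global_convergence}, so after passing to a subsequence $\hat{\K}^j \ra \K^\infty$ in $C^\infty_\loc$ on $\R^N \times (-\infty, 0]$, with $\K^\infty$ an $\al$-Andrews flow with convex time slices. Smooth convergence of boundaries forces $H_{\K^\infty}(0, 0) = 1$, so $\K^\infty$ is not a static halfspace. By the dichotomy in the conclusion of the global convergence theorem, $\K^\infty$ then has strictly positive mean curvature at every smooth boundary point in its domain.

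Next I would pass to a further subsequence to obtain $(\hat p_j', \hat t_j') \ra (p_\infty', t_\infty')$ inside the compact parabolic ball $\ol{P(0, 0, \Lambda)}$, which is contained in the region of smooth convergence for all $j$ large. Smooth convergence of the boundaries then gives $(p_\infty', t_\infty') \in \D K^\infty_{t_\infty'}$ together with $H(\hat p_j', \hat t_j') \ra H(p_\infty', t_\infty')$. Since the latter is a finite positive number, both of the divergent alternatives $H(\hat p_j', \hat t_j') \ra 0$ and $H(\hat p_j', \hat t_j') \ra \infty$ are ruled out, yielding the desired contradiction.

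The main obstacle will be verifying that the test points $(\hat p_j', \hat t_j')$ really do land on the principal boundary component that converges smoothly to $\D \K^\infty$, rather than on spurious components that get cleared out in the limit. However, this is already addressed inside Step 7 of the proof of Theorem \ref{thm-intro_h_gives_global_convergence}, where the Andrews condition is used to clear out any such competing components within bounded regions of spacetime. Consequently, for large $j$ every boundary point of $\K^j$ lying in the fixed parabolic ball $P(0, 0, \Lambda)$ converges to a point of $\D \K^\infty$, and the argument goes through.
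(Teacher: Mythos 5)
Your proposal is correct and takes essentially the same approach the paper intends: the paper declares the Harnack inequality an immediate consequence of the global convergence theorem, and your compactness/contradiction argument is precisely the standard way to cash that in. One very minor imprecision: the dichotomy ``static halfspace or $H>0$ everywhere'' is established in the proof of Theorem~\ref{thm-intro_h_gives_global_convergence} but is not part of its stated conclusion, so it would be cleaner to cite Theorem~\ref{thm-intro_smooth_convex_til_extinct}(2) for that step; also, the upper bound $H(p',t')\leq \eta H(p,t)$ already follows directly from Corollary~\ref{thm-more_global_parabolic_bounds} without any compactness, so the contradiction argument is really only needed for the lower bound.
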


\subsection{Proof of the structure theorem}

In this section, we prove the structure theorem for ancient $\al$-Andrews flows (Theorem \ref{thm-intro_smooth_convex_til_extinct}). For the proof we need the following  lemma.

\begin{lemma}[Speed limit lemma]\label{lemma_speed}
\label{lem-speed_limit}
Suppose $\{K_t\}_{t\in[t_0,t_1]}$ is a smooth mean convex flow in $B(p_1,r)$,
 $\D K_{t_0}\neq\emptyset$ and $p_1\in K_{t_1}$. Then there exists a boundary point 
 $p\in \D K_t$ $(t_0\leq t\leq  t_1)$
such that $H(p,t)\leq \frac{r}{t_1-t_0}$.
\end{lemma}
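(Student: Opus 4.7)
I will prove the contrapositive: assume $H(p,t) > H_0 := r/(t_1-t_0)$ at every boundary point for every $t \in [t_0,t_1]$, and derive a contradiction with $p_1 \in K_{t_1}$. The intuition is straightforward — such a strict mean curvature lower bound means the boundary moves inward at normal speed $>H_0$ throughout the entire interval, so it sweeps through a radius of $r$ in less than time $t_1-t_0$ and must overtake $p_1$.

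\emph{Distance function.} Consider the Lipschitz function $\rho(t) := \dist(p_1, \D K_t)$. Two preliminary observations: by mean convexity $K_{t_1} \subseteq K_t$ for all $t \in [t_0,t_1]$, so $p_1 \in K_t$ and hence $\rho(t) \geq 0$; and since $\D K_{t_0}$ is nonempty and lies in the \emph{open} ball $B(p_1,r)$, every boundary point has distance strictly less than $r$ from $p_1$, so $\rho(t_0) < r$. I would also check that $\D K_t \cap B(p_1,r)$ remains nonempty throughout $[t_0,t_1]$: otherwise $K_t \cap B(p_1,r)$ would be either empty, contradicting $p_1 \in K_{t_1} \subseteq K_t$, or all of $B(p_1,r)$, contradicting $K_t \subseteq K_{t_0} \subsetneq B(p_1,r)$.

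\emph{First-variation bound and integration.} Let $q(t) \in \D K_t$ realize (or nearly realize) the infimum defining $\rho(t)$. Since $p_1$ lies in $K_t$, the outward unit normal $\nu$ of $\D K_t$ at $q(t)$ coincides with $(q(t)-p_1)/|q(t)-p_1|$. Under MCF the boundary moves with normal velocity $-H\nu$, so the trajectory through $q(t)$ approaches $p_1$ at instantaneous rate $H(q(t),t)$; a standard first-variation computation therefore yields the one-sided bound
\[
D^+\rho(t) \leq -H(q(t),t) < -H_0.
\]
Since $\rho$ is Lipschitz and hence absolutely continuous, and the strict pointwise inequality $\rho'(t) < -H_0$ holds almost everywhere, integration over $[t_0,t_1]$ gives
\[
\rho(t_1) < \rho(t_0) - H_0(t_1-t_0) < r - r = 0,
\]
contradicting $\rho(t_1) \geq 0$.

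\emph{Main obstacle.} The only nontrivial point is justifying the Dini derivative bound rigorously when $\D K_t$ is noncompact or when the infimum defining $\rho(t)$ is not attained — for instance if $\D K_t$ accumulates on the sphere $\D B(p_1,r)$. This is a standard near-minimizer argument: one picks $q_\varepsilon(t) \in \D K_t$ with $|q_\varepsilon(t) - p_1| < \rho(t) + \varepsilon$, runs the computation above, and lets $\varepsilon \to 0$. The smoothness of the flow on compact parts of spacetime ensures that the strict pointwise inequality $\rho'(t) < -H_0$ integrates to the claimed strict inequality on the finite interval $[t_0,t_1]$.
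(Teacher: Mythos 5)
Your proof is correct in substance and takes essentially the same approach as the paper's: introduce the distance function $\rho(t)=\dist(p_1,\D K_t)$, observe that a closest boundary point moves toward $p_1$ with normal speed equal to its mean curvature, and compare the resulting derivative bound to the total drop of at most $r$ over a time interval of length $t_1-t_0$. The paper argues directly (an averaging argument produces a time $t$ with $f'(t)\geq -r/(t_1-t_0)$), while you argue by contradiction; these are trivially equivalent.

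The one step you assert without justification is that $\rho$ is Lipschitz. Monotonicity of $\rho$ (immediate from mean convexity) gives a.e.\ differentiability, but a nonincreasing function need not be absolutely continuous, so "integration gives $\rho(t_1)<\rho(t_0)-H_0(t_1-t_0)$" as stated requires an argument. The paper supplies exactly this: if $\rho(\bar t)=\bar r>0$, then $\bar B(p_1,\bar r)\subset K_{\bar t}$, and comparison with shrinking spheres forces $\rho^2(t)\geq \bar r^2-2(N-1)(t-\bar t)$ for $t\geq\bar t$, which together with monotonicity yields a local Lipschitz bound. Alternatively you can sidestep Lipschitz entirely: for a nonincreasing function one always has $\int_{t_0}^{t_1}\rho'\,dt\geq\rho(t_1)-\rho(t_0)$ (the singular part has the favorable sign), and this one-sided Lebesgue inequality is precisely the direction your contradiction needs, so the pointwise bound $\rho'<-H_0$ a.e.\ suffices. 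Either fix should be stated. (Your worry about the infimum defining $\rho(t)$ not being attained is in fact vacuous here: since $\rho(t)\leq\rho(t_0)<r$, the minimization takes place over $\D K_t\cap\bar B(p_1,\rho(t)+\de)$, which is compact for small $\de$.)
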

\begin{proof}
Let $f:[t_0,t_1]\ra [0,\infty)$ be defined by $f(s)=d(p_1,\D K_s)$.  
Since $\K$ is mean convex, $f$ is nonincreasing; also, if $\bar t\in [t_0,t_1)$
and $f(\bar t)=\bar r$, then
there is an inscribed $\bar r$-ball $\bar{B}(p_1,\bar r)\subset K_{\bar t}$, which gives a lower
bound $f^2(t)\geq \bar r^2-2(N-1)(t-\bar t)$ for $t\geq \bar t$.  Therefore $f$ is locally
Lipschitz on $[t_0,t_1)$.  
Since $f(t_1)-f(t_0)\geq-r$, there is a point of differentiability $t\in [t_0,t_1)$
such that $f'(t)\geq-\frac{r}{t_1-t_0}$.  If $p\in \D K_{t}$ realizes
the distance from $p_1$ to $\D K_{t}$, then $H(p,t)\leq -f'(t)\leq\frac{r}{t_1-t_0}$.
\end{proof}

\begin{proof}[Proof of Theorem \ref{thm-intro_smooth_convex_til_extinct} (smooth case)]
More precisely, with proof in the smooth case we mean that we assume the flow is smooth for $t<T$. We of course do allow singularities at the extinction time $t=T$.

\noindent (1) Given $\tau< T-t$, by Lemma \ref{lemma_speed} we can find a boundary point $(p',t')\in\D \K$ with  $\abs{p-p'}\leq d(p,K_{t+\tau})$, $t'\in [t,t+\tau]$ and $H(p',t')\leq\frac{d(p,K_{t+\tau})}{\tau}$. Then, Theorem \ref{thm-intro_h_gives_global_convergence} gives universal curvature bounds.

\noindent (2) If the mean curvature vanishes at some point, by Theorem \ref{thm-halfspace_smooth_convergence} and the Andrews condition the flow must be a static halfspace. If the flow is not a static halfspace, by Remark \ref{remark_sweepout} it must sweep out all space.

\noindent (3) By Corollary \ref{cor-intro_ancient} the time slices $K_t$ are convex for $t<T$.
Furthermore, if $T<T_0$ then $K_T=\cap_{t<T}K_t$, by monotonicity
of time slices (mean convexity) and the fact that $\K$ is a closed subset
of $\R^N\times (-\infty,T_0)$. Thus, the final time slice $K_T\subset \R^N$ is also closed and convex.

\noindent (4) Recall that the dimension of a closed convex set is an integer and equals the dimension of the smallest affine space where it is contained in.
We must rule out the cases $\dim K_T=N$ and $\dim K_T=N-1$. Suppose $\dim K_T=N$. Then $K_T$ has non empty interior; this contradicts the definition of extinction time.
Suppose $\dim K_T= N-1$. Then $K_T$ is contained in an $(N-1)$-plane $V$ and we can find an interior point of $K_T\cap V$ and thus an open disc $D\subset K_T\cap V$. 
Moving a tiny bit backwards in time and bending this disc slightly until it contacts
the boundary, we find a point with small mean curvature.  By the Andrews
condition and avoidance, this implies that $K_T$ contains a large ball, which is a contradiction.

Furthermore, the argument of Huisken \cite[Sec. 5]{Huisken_local_global} shows that
any backwardly self-similar $\al$-Andrews flow with $H>0$ must be a shrinking round sphere or cylinder,
provided we can justify Huisken's partial integration for the term $\int \abs{\nabla \frac{\abs{A}^2}{H^2}}^2 e^{-\abs{x}^2/2}$
without apriori assumptions on curvature and volume.
This was already explained in \cite[Proof of Thm. 10]{white_nature}, but for convenience of the reader let us reproduce the argument here: Recall first that the $t=-1/2$ slice of a backwardly selfsimilar solution satisfies
\begin{equation}
 H(x)=\langle x, \nu\rangle .
\end{equation}
Together with the convexity established in part (3), this shows that the curvature grows at most linearly,
\begin{equation}
 \abs{A}\leq H\leq \abs{x},
\end{equation}
and similarly for the derivatives. Also, by the one-sided minimization property (Remark \ref{rem_one_sided_minimization}) the volume growth is at most polynomial.
Thus, Huisken's partial integration is justified in our context.

If $\K$ is a ancient $\al$-Andrews flow, and $\lambda_j\to 0$, let $\K^j$ be the blowdown sequence obtained by parabolically recaling $(p,t)\to (\lambda_j p,\lambda_j^2 t)$.
By comparison with spheres and Lemma \ref{lemma_speed}, we can find points at controlled distance from the origin with controlled mean curvature. Thus, by Theorem \ref{thm-intro_h_gives_global_convergence} a subsequence converges to a limit $\K^\infty$, which by Husiken's monotonicity formula \cite{Huisken_monotonicity} must be self-similarly shrinking.
By the above, it must be either a plane, a cylinder, or a sphere.

Finally, observing that out of the cylinders $\R^j\times B^{N-j}$ only the ones with $j<k$ satisfy (\ref{eqn_beta_k-convexity}), gives the refinement in the $k$-convex case.
\end{proof}

Part (1) of the structure theorem immediately implies (and quickly follows from) its companion curvature estimate:

\begin{corollary}[Curvature estimate III]\label{cor_curv_est_3}
For all $\al>0$ and $\La<\infty$, there exist $\eta=\eta(\al,\La)<\infty$, $C=C(\al,\La)<\infty$
such that if $\K$ is an $\al$-Andrews flow, $p\in K_t$, $p'\in \D K_{t'}$, $t'<t$, $\frac{d^2(p,p')}{t-t'}<\La$, $\K$ is defined in $P(p,t,\eta\sqrt{t-t'})$ and some time slice
of $\K$ contains $B(p,\eta\sqrt{t-t'})$, then
$$
H(p',t')\leq \frac{C}{\sqrt{t-t'}}\,.
$$
\end{corollary}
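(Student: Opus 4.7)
The plan is to argue by contradiction using the global convergence theorem (Theorem~\ref{thm-intro_h_gives_global_convergence}), where the speed limit lemma (Lemma~\ref{lemma_speed}) supplies the boundary basepoint with controlled mean curvature that Theorem~\ref{thm-intro_h_gives_global_convergence} requires.

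Suppose the corollary fails. Then there exist a sequence of $\alpha$-Andrews flows $\{\mathcal{K}^j\}$ together with $p_j\in K^j_{t_j}$ and boundary points $p'_j\in \D K^j_{t'_j}$ with $t'_j<t_j$, such that all the hypotheses hold with $\eta_j\to\infty$ and $d(p_j,p'_j)^2<\Lambda(t_j-t'_j)$, but $H(p'_j,t'_j)\sqrt{t_j-t'_j}\to\infty$. Parabolically rescale by $(t_j-t'_j)^{-1/2}$ at $(p_j,t_j)$ to reduce to the case $(p_j,t_j)=(0,0)$, $t'_j=-1$, $|p'_j|<\sqrt{\Lambda}$, with the rescaled flow defined on $P(0,0,\eta_j)$ and some slice containing $B(0,\eta_j)$.

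Next, apply the speed limit lemma with $p_1=0$, $[t_0,t_1]=[-1,0]$, and $r=\sqrt{\Lambda}$: the hypothesis $\D K^j_{-1}\neq\emptyset$ is satisfied since $p'_j\in \D K^j_{-1}$, and $0\in K^j_0$ is automatic. This produces $(q_j,s_j)\in \D\K^j$ with $s_j\in[-1,0]$, $|q_j|\leq\sqrt{\Lambda}$, and $H(q_j,s_j)\leq\sqrt{\Lambda}$. Passing to a subsequence we may assume $(q_j,s_j)\to(q_\infty,s_\infty)$ and $p'_j\to p'_\infty$.

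Now apply Theorem~\ref{thm-intro_h_gives_global_convergence} to the basepoints $(q_j,s_j)$ with uniform scale $1/\sqrt{\Lambda}$. Since $(q_j,s_j)$ stays in a fixed bounded subset of spacetime while the rescaled flow is defined on $P(0,0,\eta_j)$ with $\eta_j\to\infty$, for any fixed $\eta$ and $j$ large the flow is defined on $P(q_j,s_j,\eta/\sqrt{\Lambda})$, so the hypotheses of the global convergence theorem are met. The doubly rescaled flows converge smoothly on compact subsets of $\R^N\times(-\infty,0]$ to a smooth $\al$-Andrews flow. In the doubly rescaled coordinates, the point $p'_j$ at time $-1$ corresponds to $\sqrt{\Lambda}(p'_j-q_j)$ at time $\Lambda(-1-s_j)$, which lies in a fixed compact region of spacetime. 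Smooth convergence of the boundaries forces the mean curvature to be uniformly bounded there, contradicting the blowup $\sqrt{\Lambda}^{-1}H(p'_j,-1)\to\infty$ in the doubly rescaled coordinates.

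The main obstacle is obtaining the boundary basepoint: Theorem~\ref{thm-intro_h_gives_global_convergence} requires a boundary point with controlled mean curvature on the correct scale, while the hypothesis only supplies an interior point $p$ together with a boundary point $p'$ whose mean curvature is, by assumption, badly behaved. The speed limit lemma is precisely the tool that bridges this gap, using mean convexity to ensure that the boundary cannot rush inward too fast. Once this bridge is in place, the remainder of the argument is routine bookkeeping of parabolic rescalings.
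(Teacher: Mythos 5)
Your proof is correct and follows essentially the same route as the paper. The paper presents Corollary~\ref{cor_curv_est_3} as the companion estimate to part (1) of Theorem~\ref{thm-intro_smooth_convex_til_extinct}, whose proof is exactly the combination you use: invoke the speed limit lemma (Lemma~\ref{lemma_speed}) to manufacture a boundary basepoint with controlled mean curvature at the relevant scale, then feed that basepoint into the global convergence theorem (Theorem~\ref{thm-intro_h_gives_global_convergence}) to get smooth subconvergence and hence uniform curvature bounds on the compact region containing $(p',t')$.
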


\subsection{Proof of the cylindrical estimate}

In this section, we prove the cylindrical estimate for $k$-convex flows (Theorem \ref{thm-intro_cylindrical}). For the proof, we need the following lemma.

\begin{lemma}\label{lemma_sphere}
If $\{K_t\subset \R^N\}_{t\in(-\infty,T)}$ is an ancient $\al$-Andrews flow with $\frac{\lambda_1}{H}\geq\beta>0$, then it must be a round shrinking sphere.
\end{lemma}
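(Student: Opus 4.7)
The plan is to show that $\K$ is backwardly self-similar, and then apply the last part of Theorem \ref{thm-intro_smooth_convex_til_extinct} to identify it as a round shrinking sphere. The main tool is Huisken's monotonicity formula (Appendix \ref{app_huisken_monotonicity}), combined with the blowdown analysis already carried out in the structure theorem.

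First I would analyze any blowdown $\K^\infty$ of $\K$. Since $\lambda_1 \geq \beta H > 0$, the flow is not a static halfspace, and the inequality $\lambda_1 \geq \beta H$ is scale-invariant and persists in smooth limits, so it passes to every blowdown. By the classification in the last part of Theorem \ref{thm-intro_smooth_convex_til_extinct}, $\K^\infty$ must be a shrinking round sphere, since both halfspaces (where $H\equiv 0$) and cylinders $\R^j\times B^{N-j}$ with $j\geq 1$ (where $\lambda_1=0$ along the flat factor) are excluded. As a consequence, each time slice $K_t$ is compact: an unbounded closed convex set contains a ray, which by the monotonicity of time slices persists backward in time and survives into the blowdown, contradicting compactness of the sphere.

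Next I would invoke Huisken's classical convex MCF theorem to identify a forward extinction point $(x_0,T)$ where $\K$ contracts to a round point. The tangent flow at $(x_0,T)$ is a self-similar shrinker, and the same argument as for the blowdown shows it must be a round sphere of Gaussian density $\Theta_N=\omega_{N-1}\bigl(\tfrac{N-1}{2\pi e}\bigr)^{(N-1)/2}$. With $(x_0,T)$ in hand, I would apply Huisken's monotonicity formula centered there: the Gaussian area
$$\Theta(t):=\int_{\partial K_t}(4\pi(T-t))^{-(N-1)/2}\,e^{-|x-x_0|^2/(4(T-t))}\,dA$$
is non-increasing in $t\in(-\infty,T)$, with forward limit $\lim_{t\to T^-}\Theta(t)=\Theta_N$ by the tangent flow computation. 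For the backward limit, rescaling by $1/\sqrt{T-t}$ about $x_0$ realizes the rescaled-time $-1$ slice of the blowdown centered at $(x_0,T)$, which by self-similarity is a round sphere of radius $\sqrt{2(N-1)}$ centered at the origin in rescaled coordinates; evaluating the Gaussian integral on this limit gives $\lim_{t\to-\infty}\Theta(t)=\Theta_N$ as well.

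With both endpoint values equal to $\Theta_N$, monotonicity forces $\Theta(t)\equiv\Theta_N$, and the rigidity case of Huisken's monotonicity formula then implies that $\K$ is backwardly self-similarly shrinking about $(x_0,T)$. Applying the last part of Theorem \ref{thm-intro_smooth_convex_til_extinct} once more, the halfspace and cylinder possibilities are excluded by $\lambda_1/H\geq\beta>0$, so $\K$ is a round shrinking sphere. The hard part of this plan will be cleanly justifying that $\lim_{t\to-\infty}\Theta(t)=\Theta_N$: this demands not merely that some blowdown is a round sphere but that the blowdown \emph{centered at} $(x_0,T)$ is the standard shrinker, centered at the origin in rescaled coordinates with the correct radius profile. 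This ultimately reduces to the self-similarity of any such blowdown together with the uniqueness of the spherical shrinker up to translation and time-translation, both of which are pinned down by the choice of basepoint $(x_0,T)$.
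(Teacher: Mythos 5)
Your proposal is correct and takes essentially the same route as the paper, which also establishes the result by noting that the asymptotic soliton (backward blowdown) and the tangent flow at extinction are both round spheres (the cylinder and halfspace cases being excluded by $\lambda_1/H\geq\beta>0$) and then invoking the equality case of Huisken's monotonicity formula. The paper's proof is far terser, merely saying "possibly after extending the flow, we can obtain a tangent flow at the singular time," whereas you make explicit the intermediate steps -- compactness of time slices via the ray/blowdown argument, Huisken's classical convex MCF theorem to locate the extinction point $(x_0,T)$, and the verification that both endpoint limits of the Gaussian density equal $\Theta_N$; these are worthwhile details, and your concern at the end about the backward Gaussian density being computed against the blowdown centered at the chosen basepoint is precisely the point on which the rigidity conclusion hinges, and your resolution (self-similarity forces the blowdown to be the standard shrinker about that basepoint) is the right one.
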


\begin{proof}[Proof (smooth case)]
By Huisken's monotonicity formula \cite{Huisken_monotonicity} (see also Appendix \ref{app_huisken_monotonicity})
and Theorem \ref{thm-intro_smooth_convex_til_extinct} we can find an asymptotic soliton for $t\to -\infty$, which due to
$\frac{\lambda_1}{H}\geq \beta$ must be a round shrinking sphere.
Similarly, possibly after extending the flow, we can obtain a tangent flow at the singular time which also must be a round shrinking sphere. Thus, by the equality case of the monotonicity formula, $\K$ itself must be a round shrinking sphere.
\end{proof}

\begin{remark}
The proof illustrates the efficiency of the Andrews condition combined with Huisken's monotonicity formula. Alternatively (and more complicatedly), the lemma can also be proved by combining a couple of results from the literature:
 By a result of Hamilton \cite{Hamilton_convexpinched} in dimension $N>2$, respectively an ad hoc argument for $N=2$, the solution must be compact.
 Then, by results of Huisken-Sinestrari \cite{HS_OWR} and Daskalopoulos-Hamilton-Sesum \cite{DHS_curve_shorten} it must be a round.
\end{remark}

\begin{proof}[Proof of Theorem \ref{thm-intro_cylindrical} (smooth case)]
Suppose the cylindrical estimate doesn't hold. Then there exists a sequence $\K^j$ of $\alpha$-Andrews flows with $\lambda_1+\ldots + \lambda_k\geq \beta H$, such that
$(0,0)\in\partial K_0^j$, $H(0,0)=1$, $\K^j$ is defined in $P(0,0,j)$, and
$(\lambda_1+\ldots +\lambda_{k-1})(0,0)< j^{-1}$, but $\K^j$ is not $\varepsilon$-close to a cylinder.
By Theorem \ref{thm-intro_h_gives_global_convergence} we can pass to a smooth convex limit $\K$.
Since for this limit we have $(\lambda_1+\ldots +\lambda_{k-1})$(0,0)=0 and $\lambda_1+\ldots + \lambda_k\geq \beta H$, by the equality case of the maximum principle (see Appendix \ref{app_preserved_curv}) it follows that $\lambda_1+\ldots +\lambda_{k-1}$ vanishes identically on $\K$ and that we have an isometric splitting $K_t=\R^{k-1}\times N_t$ for some stricly convex flow $\mathcal{N}$ satisfying $\lambda_1/H\geq \beta$.
By Lemma \ref{lemma_sphere}, $\mathcal{N}$ must be a round shrinking sphere $S^{N-k}$. This implies that $\K^j$ is $\varepsilon$-close to a shrinking round cylinder $\mathbb{R}^{k-1}\times S^{N-k}$ for large $j$; a contradiction.
\end{proof}

\section{Approximation by smooth flows and consequences}\label{sec_structure_regularity}

\subsection{Background on elliptic regularization}
\label{subsec_elliptic_regularization_background}
Suppose $K_0\subset \R^N$ is smooth, compact and strictly mean convex and let $\{K_t\}_{t\geq 0}$ the level set flow starting at $K_0$, c.f. \cite{evans-spruck,CGG}.
The flow is nonfattening. It can be described by a Lipschitz function $u:K_0\to\R$ (the time of arrival function) such that $u(x)=t\Leftrightarrow x\in\D K_t$. The function $u$ satisfies
\begin{equation}
 \textrm{div} \left(\frac{Du}{\abs{Du}}\right)=-\frac{1}{\abs{D u}}
\end{equation}
in the viscosity sense.
 
By elliptic regularization $\{K_t\times \R \}$ arises as limit for $\eps\to 0$ of a family of smooth flows $\{ L_t^\eps \}$, c.f. \cite{evans-spruck,CGG,Ilmanen,Metzger_Schulze,white_subsequent}; we will describe this now.

Since $K_0$ is strictly mean convex, there are smooth solutions $u_\eps: K_0\to [0,\infty)$ of the Dirichlet problem
\begin{equation}\label{graphical_translator}
 \textrm{div}\left(\frac{D u_{\eps}}{\sqrt{\eps^2+\abs{Du_{\eps}}^2}}\right)=-\frac{1}{\sqrt{\eps^2+\abs{Du_{\eps}}^2}},\quad u_\eps|_{\D K_0}=0\ ,
\end{equation}
and as $\eps$ tends to zero the functions $u_\eps$ converge uniformly to $u$ \cite[Sec. 7.3]{evans-spruck}.

Geometrically, equation (\ref{graphical_translator}) says that $N_0^\eps=\textrm{graph}(\frac{1}{\eps}u_\eps)$ satisfies
\begin{equation}\label{eulerlagrange}
\vec{H}=-\frac{1}{\eps} e_{N+1}^\perp\, ,
\end{equation}
or equivalently that $N_t^\eps=\textrm{graph}\left(\frac{u_\eps-t}{\eps}\right)$ is a translating solution of the mean curvature flow.

Now for $(x,y)\in N_t^\eps$ we have $t=u_\eps(x)-\eps y$. Thus the time of arrival function of $\{N_t^\eps\}$ is given by
\begin{equation}
 U_\eps(x,y)=u_\eps(x)-\eps y \, .
\end{equation}
For $\eps\to 0$ it converges locally uniformly to $U(x,y)=u(x)$, which is the time of arrival function of $\{\partial K_t\times \R\}$.

To get rid of the boundary of the hypersurfaces $N_t^\eps$, we actually have to wait for a little time $\tau>0$. For $t\geq \tau$ let $L_t^\eps\subset B(0,\frac{\tau}{\eps})\subset\R^{N+1}$ be the region bounded by $N_t^\eps$, i.e. $\D L_t^\eps=N_t^\eps\cap B(0,\frac{\tau}{\eps})$.
Then for $\eps\to 0$ the flows $\{L_t^\eps\}_{t\geq \tau}$ converge to $\{K_t\times \R\}_{t\geq \tau}$.
The convergence is in the sense of Hausdorff convergence of $\L^\eps\subset\R^{N+1,1}$, and similarly for the complements.\footnote{It also convergences in the sense of Brakke flows, but we don't need that.} For $\tau$ small enough (depending on $K_0$), the convergence is smooth at least until $t=2\tau$.

\subsection{Properties of the elliptic approximators}

In this section, we prove that the elliptic approximators satisfy the Andrews condition. Recalling the geometric setting of Andrews \cite{andrews1},
let $K\subset \R^N$ be a smooth $\al$-Andrews domain and let $x\in\D K$. The interior ball $\bar{B}_{\Int}$ of radius $r(x)=\frac{\alpha}{H(x)}$ has center $c(x)=x-r(x)\nu_x$, where $\nu_x$ is the outward unit normal at $x$.
Thus, the interior noncollapsing condition $\bar{B}_{\Int}\subseteq K$ can be expressed as the inequality
\begin{equation}
\norm{y-c(x)}^2- r(x)^2=\norm{y-x}^2+2\tfrac{\alpha}{H(x)}\langle y-x,\nu_x\rangle\geq 0
\end{equation}
for all $y\in \D K$. Exterior noncollapsing works similarly.

\begin{theorem}\label{thm_ell_reg}
Let $K_0\subset \R^N$ be smooth compact $\al$-Andrews domain. Let $\L^\eps$ be the elliptic approximators as in the previous section.
 \begin{enumerate}
  \item The elliptic approximators $\L^\eps$ are $\al_\eps$-Andrews flows with constants $\al_\eps>0$ satisfying $\liminf_{\eps\to 0}\al_\eps\geq\alpha$.
  \item If $K_0$ satisfies in addition $\lambda_1+\ldots+\lambda_k\geq \beta H$, then $\L^\eps$ satisfies $\lambda_1+\ldots+\lambda_{k+1}\geq \beta_\eps H$ with $\liminf_{\beta\to 0}\beta_\eps\geq\beta$.
 \end{enumerate}

\end{theorem}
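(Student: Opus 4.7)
The plan is to adapt the two-point maximum principle of Andrews-Langford-McCoy to the compact translating solitons $N_0^\eps$. Since $\L^\eps$ is a translating soliton in $\R^{N+1}$---its time slices differ only by translation in the $-e_{N+1}$ direction at speed $1/\eps$---both the Andrews condition and the uniform $(k+1)$-convexity condition at arbitrary times reduce to the corresponding conditions on the single smooth hypersurface $N_0^\eps = \operatorname{graph}(u_\eps/\eps)$, oriented as the boundary of its subgraph $L_0^\eps \subset \R^{N+1}$.

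\textbf{Interior maximum principle.} For part (1), consider Andrews' two-point function
$$Z(x,y) = H(x)\,\|y-x\|^2 + 2\alpha_\eps\,\langle y-x,\nu(x)\rangle,\qquad x,y\in N_0^\eps,$$
whose nonnegativity encodes interior noncollapsing with constant $\alpha_\eps$. Plugging in the translator equation (\ref{eulerlagrange}) to eliminate time derivatives, the parabolic PDE that Andrews derives for $Z$ along mean curvature flow reduces to a second-order elliptic inequality on $N_0^\eps \times N_0^\eps$, and the standard two-point maximum principle then forbids interior negative minima. For part (2), the same scheme applies to $\lambda_1+\ldots+\lambda_{k+1} - \beta_\eps H$: note that the stabilization $K_0 \times \R$ has principal curvatures $\{0,\lambda_1,\ldots,\lambda_{N-1}\}$, so the hypothesis $\lambda_1+\ldots+\lambda_k \geq \beta H$ on $K_0$ lifts to $\lambda_1+\ldots+\lambda_{k+1} \geq \beta H$ on $K_0 \times \R$, with the index shifted by one. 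A tensor (Hamilton-Huisken-Sinestrari type) maximum principle, again converted to elliptic form via the translator equation, handles the interior.

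\textbf{Boundary analysis and conclusion.} The graph $N_0^\eps$ has boundary $\partial K_0 \times \{0\}$, where $u_\eps$ vanishes and the graph becomes nearly vertical as $\eps \to 0$. Using barriers built from the interior and exterior Andrews balls and the $k$-convexity data of $K_0$, together with Schauder estimates for (\ref{graphical_translator}) and the Dirichlet condition $u_\eps|_{\partial K_0}=0$, one obtains $\eps$-uniform $C^2$-control on $u_\eps$ in a neighborhood of $\partial K_0$. This forces the geometry of $N_0^\eps$ near its boundary to converge to that of the cylinder $\partial K_0 \times \R$ as $\eps \to 0$. Consequently the Andrews balls and $k$-convexity data of $K_0$ lift to near-optimal data on $N_0^\eps$, with constants $\alpha_\eps \to \alpha$ and $\beta_\eps \to \beta$, yielding $Z \geq 0$ and $\lambda_1+\ldots+\lambda_{k+1} - \beta_\eps H \geq 0$ in a neighborhood of $\partial N_0^\eps$. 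Combined with the interior maximum principle, this establishes both parts of the theorem (exterior noncollapsing being handled symmetrically).

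\textbf{Main obstacle.} The delicate step is the boundary analysis: deriving quantitative, $\eps$-uniform $C^2$-estimates on $u_\eps$ up to $\partial K_0$ despite the graph's slopes blowing up like $1/\eps$, and verifying that the inscribed and circumscribed balls of $K_0$ lift with essentially the same Andrews constant. This requires carefully designed barriers exploiting the specific structure of (\ref{graphical_translator}) and the $\alpha$-Andrews (resp.\ uniformly $k$-convex) geometry of $K_0$.
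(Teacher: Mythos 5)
Your interior arguments match the paper's: for part (1) both apply the Andrews--Langford--McCoy two-point construction (the paper cites their Theorem~2 for the elliptic differential inequality satisfied by $Z^\ast/H$ and $Z_\ast/H$ on the translator, then invokes the maximum principle to push the extremum to $\partial N_0^\eps$), and for part~(2) both use the tensor maximum principle for $A/H$ on $N_0^\eps$, noting that $\{\lambda_1+\cdots+\lambda_{k+1}\geq \beta_\eps H\}$ is a convex cone and that the extra trivial eigenvalue from the $\R$-factor shifts $k$ to $k+1$. The divergence is entirely in how the boundary term is handled. The paper does \emph{not} prove any boundary regularity for $u_\eps$; instead it observes that because $N_t^\eps$ is a translator, the optimal Andrews (resp.\ $k$-convexity) constant is time-independent, and therefore the extremal boundary behavior can be read off from the time slices $L_\tau^\eps$ at arbitrarily small times $\tau$, which converge smoothly to $K_\tau\times\R$ as $\eps\to 0$. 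Since $K_0$ (hence $K_\tau$ for $\tau$ small) is $\alpha$-Andrews, one directly gets $\liminf \alpha_\eps\geq\alpha$, with no barriers or Schauder theory needed.

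Your proposed boundary analysis is a genuine gap, and you correctly flag it as the main obstacle. The difficulty is that $\eps$-uniform $C^2$ control on $u_\eps$ alone does not yield $C^2$ geometric control on $N_0^\eps = \operatorname{graph}(u_\eps/\eps)$: the quantities entering the second fundamental form of the graph are built from $D(u_\eps/\eps)\sim \eps^{-1}Du_\eps$ and $D^2(u_\eps/\eps)\sim\eps^{-1}D^2u_\eps$, and the asserted convergence of the curvature to that of $\partial K_0\times\R$ only emerges after exploiting the specific structure of equation~(\ref{graphical_translator}) --- e.g.\ $H = (\eps^2+|Du_\eps|^2)^{-1/2}$, which stays bounded precisely because $|Du_\eps|\to |Du| = 1/H_{\partial K_0}>0$ near $\partial K_0$. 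General Schauder estimates do not give this; one needs the degenerate-graph (vertical-rescaling) structure. Moreover, the Andrews condition is a global (inscribed-ball) condition, which boundary barriers do not directly supply. The paper's translation argument is not merely a shortcut: it sidesteps exactly the step your proposal leaves unresolved.
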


\begin{proof}
(1) We adopt the proof of Andrews-Langford-McCoy \cite{Andrews_Langford_McCoy} to our setting. Consider the funcions $Z^\ast,Z_\ast:N_0^\eps\to \R$ defined as
\begin{equation}
  Z^\ast(x):=\sup_{y\neq x} Z(x,y),\qquad Z_\ast(x):=\inf_{y\neq x} Z(x,y),
\end{equation}
where $Z(x,y)=2 \langle x-y,\nu_x\rangle \norm{x-y}^{-2}$.
Note that interior and exterior noncollapsing correspond to the inequalities $Z^\ast \leq \frac{1}{\al_\eps} H$ and $Z_\ast\geq -\frac{1}{\al_\eps} H$ respectively.
It follows from \cite[Thm. 2]{Andrews_Langford_McCoy} that
\begin{equation}
\Lap\frac{Z^\ast}{H}+2\langle \nabla\log H,\nabla \frac{Z^\ast}{H}\rangle \geq 0, \quad
\Lap\frac{Z_\ast}{H}+2\langle \nabla\log H,\nabla \frac{Z_\ast}{H}\rangle \leq 0,
\end{equation}
in the viscosity sense. Thus $\frac{Z^\ast}{H}$ attains its maximum over $N_0^\eps$ at the boundary $\partial N_0^\eps$, and likewise $\frac{Z_\ast}{H}$ attains its minimum over $N_0^\eps$ at the boundary $\partial N_0^\eps$.
Since $N_t^\eps$ moves by translation, the value of its optimal Andrews constant is independent of time. 
Thus, to find the limiting behavior of the Andrews constants for $\eps\to 0$, we can simply look at arbitrarily small times $t=\tau$. Since  
$\{L_t^\eps\}$ converges to $\{K_t\times \R\}$ as $\eps$ tends to zero, and since $K_0$ has Andrews constant $\alpha$, the claim follows.

\noindent(2) (cf. Cheeger-Haslhofer-Naber \cite[Sec. 4]{CHN}) On $N_0^\eps$ we have the equation
\begin{equation}
 \Lap\frac{A}{H}+2\langle \nabla \log H, \nabla \frac{A}{H}\rangle = 0.
\end{equation}
Note that $(\lambda_1+\ldots+\lambda_{k+1})\geq \beta_\eps H$ defines a convex subset of the space of symmetric matrices with positive trace.
Thus, the tensor maximum principle (see e.g. \cite[Sec. 4]{Hamilton_tensor_maximum}) implies that the minimum of the function $(\lambda_1+\ldots+\lambda_{k+1})/H$ over $N_0^\eps$
is attained on $\D N_0^\eps$. Recalling that the translators $\{L_t^\eps\}$ converge to $\{K_t\times \R\}$ as $\eps$ tends to zero, and observing that
splitting off the $\R$-factor corresponds to replacing $k+1$ by $k$, the claim follows.
\end{proof}

\subsection{Strong Hausdorff convergence and consequences}\label{subsec_strongHD}
In this section, we show that the results established previously in the smooth setting, hold for \emph{all} $\al$-Andrews flows, not just the smooth ones.
We also draw some further conclusions.

The key is the following notion of convergence.

\begin{definition}[Strong Hausdorff convergence]\label{def_strongHD}
 A sequence of $\al$-Andrews flows $\K^j$ converges to an $\al$-Andrews flow $\K$ in the \emph{strong Hausdorff sense}, if $\K^j$ Hausdorff converges to $\K$ and the complements of $\K^j$ Hausdorff converge to the closure of the complement of $\K$.
\end{definition}

In the elliptic regularization (Section \ref{subsec_elliptic_regularization_background}), since the time of arrival functions converge, it is clear that the convergence $\L^\eps\to \{K_t\times \R\}$ is in the strong Hausdorff sense.
More generally, we have:

\begin{proposition}[Strong Hausdorff convergence]\label{prop_strongHD}
 Every Hausdorff limit of $\al$-Andrews flows is a strong Hausdorff limit.
\end{proposition}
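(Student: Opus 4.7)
The plan is to show that any subsequential Hausdorff limit $\L$ of $\overline{(\K^j)^c}$ equals $\overline{\K^c}$. The easy half, $\overline{\K^c}\subseteq\L$, follows directly from the assumed Hausdorff convergence $\K^j\to\K$: if $x\in\K^c$ then a spacetime neighborhood of $x$ is disjoint from $\K$, and hence from $\K^j$ for large $j$, placing $x\in\liminf\overline{(\K^j)^c}\subseteq\L$; taking closures then handles points of $\D\K$.

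The substance is the reverse inclusion: no interior point $y=(y^0,t_0)\in\mathrm{int}(\K)$ can be a limit of points $y_j\in(\K^j)^c$. Suppose by contradiction it is. Because $y\in\mathrm{int}(\K)$, combining mean convexity of $\K$ with avoidance against shrinking spheres furnishes a fat cylindrical neighborhood $U=B(y^0,r')\times(t_0-\tau,t_0+\tau)\subseteq\K$ for some $r',\tau>0$. By Hausdorff convergence there are $q_j\in\K^j$ with $q_j\to y$, and any path from $q_j$ to $y_j$ inside $U$ must cross $\D\K^j$, producing boundary points $p_j\in\D\K^j$ with $p_j\to y$. I then split into two cases based on the behavior of $H(p_j)$.

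If $H(p_j)$ is bounded along a subsequence, the curvature estimate (Theorem \ref{thm-intro_local_curvature_bounds}) gives uniform smooth control of $\K^j$ in a parabolic ball of fixed size around $p_j$; after passing to a further subsequence, $\D\K^j$ converges smoothly to a hypersurface passing through $y$. But any Hausdorff accumulation point of $\D\K^j$ is simultaneously a limit of points in $\K^j$ and in $(\K^j)^c$, so it lies in $\K\cap\overline{\K^c}=\D\K$, contradicting $y\in\mathrm{int}(\K)$. If instead $H(p_j)\to\infty$, the strategy is to reduce to the bounded case by extracting a different nearby boundary sequence of bounded mean curvature: the Andrews condition places $y_j$ in a pocket of $(\K^j)^c$ sitting inside $U$, and applying the speed limit lemma (Lemma \ref{lemma_speed}) in a suitably chosen spatial ball around $y^0$ across a time interval inside $(t_0-\tau,t_0)$ (where mean convexity keeps $K^j$ substantial) produces $\tilde p_j\in\D\K^j$ with $\tilde p_j\to y$ and $H(\tilde p_j)$ bounded, thereby reducing to the first case. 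The principal obstacle is this unbounded case: one has to choose the speed-limit ball and time window so that the resulting boundary point actually lies in $U$ and converges to $y$ rather than escaping to $\D U$; making this rigorous relies on monotonicity, the pocket structure enforced by the Andrews condition, and the Hausdorff closeness of $\K^j$ to the fat set $U\subseteq\K$.
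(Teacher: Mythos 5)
Your plan is recognizably close in spirit to the paper's: the paper also reduces the statement to showing that no interior point of $\K$ can be a Hausdorff limit of points in $(\K^j)^c$, and it also does so by producing, near a boundary point $p_j\in\D\K^j$ approaching $X\in\Int(\K)$, a nearby boundary point of \emph{controlled} mean curvature via the speed limit lemma, so that the Andrews condition and avoidance furnish parabolic balls of definite size in $(\K^j)^c$ close to $X$, contradicting $\lim_j d(X',\K^j)=0$ for $X'\in\K$. The paper isolates this as Claim \ref{claim_strongHD} (tameness of the boundary): near any boundary point $(p,t)$ of an $\al$-Andrews flow defined in $P(p,t,r)$, \emph{both} $\K$ and $\K^c$ contain parabolic balls of radius $\rho(\alpha)\,r$. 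That claim yields the contradiction immediately, with no case split.

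The case split is where your write-up runs into trouble, and it is also unnecessary. In Case 1, the step ``any Hausdorff accumulation point of $\D\K^j$ is simultaneously a limit of points in $\K^j$ and in $(\K^j)^c$, so it lies in $\K\cap\ol{\K^c}=\D\K$'' is circular: the inclusion that limits of $(\K^j)^c$-points lie in $\ol{\K^c}$ is precisely what you are trying to prove. (The argument can be repaired -- smooth convergence of $\D\K^j$ to a hypersurface through $y$ forces the Hausdorff limit $\K$ to be locally a closed half-space near $y$, incompatible with $y\in\Int(\K)$ -- but you would need to say this, not appeal to the conclusion.) Case 2 is left as a sketch, and you yourself flag it as the principal obstacle. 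In fact the whole dichotomy on $H(p_j)$ is avoidable: two-sided avoidance already places a boundary point of $K^j_{t_1}$ in a ball of fixed radius around $p_j$ at a slightly earlier universal time $t_1$, and the speed limit lemma applied across $[t_1,\tfrac12 t_1]$ then produces a nearby boundary point with $H\lesssim |t_1|^{-1}$ unconditionally. Once that bounded-curvature point is in hand, the Andrews condition gives a definite exterior ball and forward avoidance propagates it into a parabolic ball in $(\K^j)^c$; neither the curvature estimate nor smooth convergence of the boundary is needed for this proposition.
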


\begin{remark}
 For general sets, Hausdorff convergence of course does not imply strong Hausdorff convergence (as an illustrative example, consider the unit disc with a couple of holes of size $1/j$).
\end{remark}

\begin{proof}[Proof of Proposition \ref{prop_strongHD} (smooth case)] The idea is that the Andrews condition ensures that the boundaries $\D\K^j$ cannot behave too wildly. This is made precise by the following claim: 

\begin{claim}[tameness of the boundary]\label{claim_strongHD}
There is a $\rho=\rho(\alpha)>0$ such that if $\K$ is a $\al$-Andrews flow
defined in $P(p,t,r)$ and 
$(p,t)\in\D\K$ is a boundary point, then both 
$P(p,t,r)\cap \K$ and $P(p,t,r)\setminus \K$ contain
parabolic  balls of radius $\rho\,r$.
\end{claim}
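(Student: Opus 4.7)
The strategy is to combine the speed limit lemma (Lemma~\ref{lemma_speed}) with the Andrews condition and the monotonicity of time slices ($K_s\supseteq K_{s'}$ for $s\leq s'$, which is just mean convexity). At the point $(p,t)$ itself the Andrews balls can be tiny when $H(p,t)$ is large, but the speed limit lemma supplies a \emph{nearby} boundary point $(\bar p,\bar t)$ at which $H\leq C/r$ with $C=C(N)$. The Andrews balls at $(\bar p,\bar t)$ then have definite size of order $\alpha r$, and monotonicity propagates them through time to fill out full parabolic balls on each side of $\D\K$.

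We may assume $\alpha\leq 1$. Fix a small constant $c=c(N)\in(0,1/(16(N-1)))$ and set $\tau=c\,r^2$. For the \emph{interior} parabolic ball, apply the speed limit lemma on $B(p,r/2)\times[t-\tau,t]$. Its nondegeneracy hypothesis $\D K_{t-\tau}\cap B(p,r/2)\neq\emptyset$ is verified by trichotomy: the case $B(p,r/2)\cap K_{t-\tau}=\emptyset$ would force $p\notin K_t$ via mean convexity, contradicting $p\in\D K_t$; and the case $B(p,r/2)\subseteq K_{t-\tau}$ would, by avoidance with a shrinking round sphere and the smallness of $c$, produce a ball of positive radius around $p$ lying inside $K_t$, again contradicting $p\in\D K_t$. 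The lemma then yields $(\bar p,\bar t)\in\D\K$ with $\bar p\in B(p,r/2)$, $\bar t\in[t-\tau,t]$, and $H(\bar p,\bar t)\leq(r/2)/\tau=C/r$. The interior Andrews ball at $(\bar p,\bar t)$ provides $\bar B(q,\alpha r/C)\subseteq K_{\bar t}$; by mean convexity this inclusion persists for all $s\leq\bar t$, so the parabolic ball $P(q,\bar t,\rho r)$ with $\rho:=\alpha/(2C)$ lies inside $\K$, and the choice of parameters ensures it is contained in $P(p,t,r)$.

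For the \emph{exterior} parabolic ball, run exactly the same procedure on the shifted cylinder $B(p,r/2)\times[t-2\tau,t-\tau]$, which guarantees that the boundary point $(\bar p,\bar t)$ produced now satisfies $\bar t\leq t-\tau$. The exterior Andrews ball $\bar B(q',\alpha r/C)$ is disjoint from $K_{\bar t}$. Mean convexity now runs the opposite way, $K_s\subseteq K_{\bar t}$ for $s\geq\bar t$, so for $s\in[\bar t,\bar t+(\rho r)^2]\subseteq[\bar t,t]$ the ball $\bar B(q',\rho r)$ remains disjoint from $K_s$. Hence $P(q',\bar t+(\rho r)^2,\rho r)\subseteq P(p,t,r)\setminus\K$.

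The main obstacle is purely a bookkeeping one: verifying the speed limit lemma's nondegeneracy via the trichotomy above, and making sure the resulting Andrews balls, together with their time-translates of parabolic length $(\rho r)^2$, all fit inside $P(p,t,r)$. Once $c=c(N)$ is chosen small enough (comparable to $\rho^2$), the scheme closes up and yields $\rho=\rho(\alpha)$ of order $\alpha/N$.
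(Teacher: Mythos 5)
Your proof is correct and follows essentially the same route as the paper: normalize, establish the speed limit lemma's nondegeneracy hypothesis via two-sided avoidance (your trichotomy), extract a nearby boundary point with controlled mean curvature, and then let the Andrews balls there propagate through time via mean convexity. The only cosmetic difference is that the paper applies the speed limit lemma once on a time window bounded away from $t$, obtaining both interior and exterior balls from the same point, whereas you apply it twice on adjacent windows; both variants close up with the same universal constants.
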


\begin{proof}[Proof of Claim \ref{claim_strongHD} (smooth case)]
By translation and rescaling, we may take $(p,t,r)=(0,0,1)$.  By two-sided
avoidance,
there is a universal $t_1\in (-\frac12,0)$ such that the time slice $K_{t_1}$ has a boundary 
point in $B(0,\frac12)$.  By the speed limit lemma (Lemma \ref{lem-speed_limit}), there is a 
$(p_2,t_2)\in B(0,\frac12)\times [t_1,\frac12 t_1]$ with mean curvature at most
$t_1^{-1}$.  The $\al$-Andrews condition and two-sided avoidance imply the lemma.
\end{proof}

Now, to prove the proposition, let $\K^j$ be a sequence of smooth $\al$-Andrews flows that Hausdorff converges to a limit $\K$ (which by definition is also an $\al$-Andrews flow, though possibly nonsmooth).
Recall that Hausdorff convergence means that $\lim_{j\to\infty}d(X,\K^j)=0$ for every $X\in\K$ and $\liminf_{j\to\infty} d(Y,\K^j)>0$ for every $Y\in \K^c$.

The second condition implies $\lim_{j\to\infty}d(Y,(\K^j)^c)=0$ for every $Y\in\ol{\K^c}$.
We claim that $\liminf_{j\to\infty} d(X,(\K^j)^c)>0$ for every $X\in \Int(\K)$.
If not, after passing to a subsequence there would be boundary points $X_j\in \D\K^j$ converging to $X$, and by Claim \ref{claim_strongHD} this would
produce nearby parabolic balls in the complement of $\K^j$, contradicting the assumption that $\lim_{j\to\infty}d(X',\K^j)=0$ for every $X'\in\K$. Thus, $(\K^j)^c$ Hausdorff converges to $\ol{\K^c}$.
\end{proof}

\begin{corollary}\label{cor_strongHD}
 If a sequence of $\al$-Andrews flows Hausdorff converges, $\K^j\to\K$, then:
\begin{enumerate}
 \item The boundaries Hausdorff converge to the boundary of the limit, $\D\K^j\to\D\K$.
 \item Every compact subset of the interior of $\K$ is contained in the interior of $\K^j$ for $j$ sufficiently large.
\end{enumerate}
\end{corollary}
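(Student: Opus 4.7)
The plan is to deduce both assertions directly from Proposition \ref{prop_strongHD}, which supplies the two Hausdorff convergences $\K^j\to \K$ and $(\K^j)^c\to\overline{\K^c}$. Since everything in sight is a closed set in spacetime, the two parts become essentially a soft topological exercise; the hard content has already been absorbed into the tameness of the boundary (Claim \ref{claim_strongHD}) used in Proposition \ref{prop_strongHD}.

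For part (1), I would prove the two usual inclusions for Hausdorff convergence of the boundaries. For the ``limsup'' direction, given $Z_j\in\D\K^j$ with $Z_j\to Z$, observe that $Z_j\in\K^j$ already gives $Z\in\K$ by Hausdorff convergence of $\K^j\to\K$ (and closedness of $\K$). Since $\D\K^j\subseteq\overline{(\K^j)^c}$, one can choose $W_j\in(\K^j)^c$ with $|W_j-Z_j|<1/j$, so $W_j\to Z$, and strong Hausdorff convergence yields $Z\in\overline{\K^c}$; hence $Z\in\K\cap\overline{\K^c}=\D\K$. For the reverse direction, given $X\in\D\K$, one picks $Y_j\in\K^j$ with $Y_j\to X$ and $Z_j\in(\K^j)^c$ with $Z_j\to X$ (using the two halves of strong Hausdorff convergence). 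Since each segment $[Y_j,Z_j]$ joins a point of $\K^j$ to a point of $(\K^j)^c$, it must meet $\D\K^j$ at some $X_j$, and $|X_j-X|\leq \max(|Y_j-X|,|Z_j-X|)\to 0$.

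For part (2), I would argue by contradiction. If a compact subset $C\subseteq\Int(\K)$ were not eventually contained in $\Int(\K^j)$, after passing to a subsequence there would be points $X_j\in C\setminus\Int(\K^j)$ with $X_j\to X\in C$. Since $X_j\in(\K^j)^c\cup\D\K^j\subseteq \overline{(\K^j)^c}$, a small perturbation produces $W_j\in(\K^j)^c$ with $W_j\to X$ (in the boundary case one uses $\D\K^j\subseteq\overline{(\K^j)^c}$, or equivalently the tameness in Claim \ref{claim_strongHD}). Strong Hausdorff convergence then forces $X\in\overline{\K^c}$, contradicting $X\in C\subseteq\Int(\K)$.

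There is essentially no obstacle, since the only real work, namely ruling out ``phantom interior'' on the approximators via the Andrews condition, is already encoded in Proposition \ref{prop_strongHD}. The one point requiring minor care is making sure $\partial\K^j\subseteq\overline{(\K^j)^c}$ is used correctly, i.e.\ that one always has access to points in the complement arbitrarily close to a boundary point; this is immediate from closedness of $\K^j$, but if desired one can invoke Claim \ref{claim_strongHD} to get quantitative parabolic balls in $(\K^j)^c$ near each $\D\K^j$-point, which even gives a uniform rate.
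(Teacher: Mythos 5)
Your proposal is correct, and it follows essentially the same route as the paper, which simply records that the corollary "immediately follows from Proposition \ref{prop_strongHD} and Definition \ref{def_strongHD}." You are filling in the purely topological deductions that the paper leaves implicit: extracting the two one-sided inclusions for $\D\K^j\to\D\K$ from the pair of Hausdorff convergences $\K^j\to\K$ and $(\K^j)^c\to\overline{\K^c}$, and deducing (2) by the same two ingredients together with $\D\K^j\subseteq\overline{(\K^j)^c}$. The details check out (including the convexity of parabolic balls, which justifies the segment estimate $|X_j-X|\leq\max(|Y_j-X|,|Z_j-X|)$), so this is a faithful elaboration of the paper's argument rather than a different one.
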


\begin{proof}[Proof (smooth case)]
 This immediately follows from Proposition \ref{prop_strongHD} and the definition of strong Hausdorff convergence, Definition \ref{def_strongHD}.
\end{proof}

\begin{remark}
 In Definition \ref{def_strongHD}, instead of requiring that the complements Hausdorff converge, equivalently one could require (1) or (2).
\end{remark}

The strong Hausdorff convergence ensures that all our results established previously in the smooth setting pass to Hausdorff limits:

\begin{claim}
 The results established previously for smooth $\al$-Andrews flows, hold for \emph{all} $\al$-Andrews flows.
\end{claim}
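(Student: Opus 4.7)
The plan is to reduce everything to the smooth case by approximation. Recall that by Definition \ref{def_andrews_flows}, the class of $\al$-Andrews flows is the smallest one containing all smooth $\al$-Andrews flows and all compact level set flows with smooth $\al$-Andrews initial data, and closed under restriction to open sets, restriction to subintervals, parabolic rescaling, and Hausdorff limits. The first three operations are manifestly compatible with every estimate from Sections \ref{sec-halfspace_and_consequences} and \ref{sec-global_convergence_and_consequences}, since each one is stated in a local, parabolically scale-invariant form. For smooth level set flows, Theorem \ref{thm_ell_reg} produces an explicit family of smooth $\al_\eps$-Andrews flows $\L^\eps$ approximating the stabilized flow $\{K_t \times \R\}$ with $\liminf_\eps \al_\eps \geq \al$ (and in the $k$-convex case an analogous shift that reduces to $k$-convexity after quotienting by the $\R$-factor). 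Thus the only thing left to verify is that each smooth estimate survives passage to Hausdorff limits.

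For this step I would invoke Proposition \ref{prop_strongHD} and Corollary \ref{cor_strongHD}: every Hausdorff limit $\K^j \to \K$ of smooth $\al$-Andrews flows is automatically a strong Hausdorff limit, so $\D \K^j \to \D \K$ in the Hausdorff sense and compact subsets of $\Int(\K)$ eventually sit inside $\Int(\K^j)$. Consider the curvature estimate (Theorem \ref{thm-intro_local_curvature_bounds}) first. Given $(p,t) \in \D K_t$ with viscosity mean curvature $H(p,t) \leq r^{-1}$, pick, for each $\de > 0$, a smooth compact inner barrier $X \subseteq K_t$ with $p \in \D X$ and $H_{\D X}(p) \leq (1+\de)r^{-1}$. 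Strong Hausdorff convergence places $X$ inside $K^j_t$ for all large $j$; evolving $X$ forward by smooth mean curvature flow and tracking the first contact with $\D \K^j$ via the avoidance principle yields boundary points $(p_j,t_j) \in \D \K^j$ converging to $(p,t)$ with $H(p_j,t_j) \leq (1+o(1))r^{-1}$. Applying the smooth curvature estimate to $\K^j$ at $(p_j,t_j)$ gives uniform $|\nabla^\ell A|$-bounds on $P(p_j,t_j, \rho r)$, and these pass to a $C^\infty_{\textrm{loc}}$ limit on $P(p,t,\rho r)$. The same inner/outer ball comparison shows that the viscosity $\al$-Andrews condition itself persists in the limit.

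Once the curvature estimate is promoted to general $\al$-Andrews flows, the remaining results follow by the same scheme. Each of the convexity estimate, global convergence theorem, structure theorem for ancient flows, and cylindrical estimate was proved in the smooth setting by a contradiction argument whose contradicting sequence consisted of smooth $\al$-Andrews flows; re-running these contradiction arguments in the general setting produces a candidate counterexample $\K$, approximates it by smooth $\K^j$ via the procedure just described (matching curvature and Andrews bounds up to $o(1)$), extracts a smooth limit by Theorem \ref{thm-intro_h_gives_global_convergence}, and arrives at the same contradiction. The main obstacle I anticipate is not conceptual but bookkeeping: matching viscosity data on $\K$ with bona fide smooth data on the approximators $\K^j$ at every instance where a hypothesis involves $H(p,t)$, $\la_1/H$, or the Andrews constant. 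The strong Hausdorff convergence of Proposition \ref{prop_strongHD}, together with the tameness of Claim \ref{claim_strongHD}, is precisely what makes these matchings routine, so I expect the proof to consist of a short template argument applied once to each of the statements in question.
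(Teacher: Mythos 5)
Your proposal is correct and follows essentially the same route as the paper: invoke Definition \ref{def_andrews_flows} and Theorem \ref{thm_ell_reg} to realize any $\al$-Andrews flow (via its stabilization $\{K_t\times\R\}$) as a strong Hausdorff limit of smooth flows satisfying the Andrews condition, use Proposition \ref{prop_strongHD} and Corollary \ref{cor_strongHD} to transfer the curvature estimate to the limit, and observe that once the curvature estimate holds, boundary points with finite viscosity mean curvature are regular with smooth convergence so that all remaining estimates pass to the limit automatically.

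One detail to tighten: since your inner barrier $X$ satisfies $p\in\D X\cap\D K_t$, it is not a compact subset of $\Int(K_t)$, so Corollary \ref{cor_strongHD}(2) does not immediately place $X$ inside $K^j_t$ — you first need to translate or shrink $X$ into the interior. Likewise, because avoidance preserves the nesting $X\subseteq K^j_t$ under the flow, "evolving $X$ forward and tracking first contact" will never produce a touching point; the contact point $(p_j,t_j)\in\D\K^j$ with controlled mean curvature is obtained by pushing the shrunken barrier back outward (a static translation at time $t$, or simply by using $\D\K^j\to\D\K$ together with the slightly shrunken interior/exterior balls), not by running the mean curvature flow. With this replacement, your argument matches the paper's.
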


\begin{proof}
 By Definition \ref{def_andrews_flows}, Theorem \ref{thm_ell_reg} and Proposition \ref{prop_strongHD} every $\al$-Andrews flow, respectively its stabilized version $\{K_t\times \R\}$, is a strong Hausdorff limit of smooth flows satisfying the Andrews condition.
 The claim then follows from the following two basic observations:
\begin{itemize}
 \item If a result holds for a sequence of flows $\K^j$, and $\K^j$ converges to $\K$ in the strong Hausdorff sense, then it also holds for $\K$.
 \item If a result holds for $\{K_t\times \R\}$, then it also holds for $\{K_t\}$.
\end{itemize}
Indeed, if $\K$ is a Hausdorff limit of smooth $\al$-Andrews flows $\K^j$, then Corollary \ref{cor_strongHD} ensures that the curvature estimate (Theorem \ref{thm-intro_local_curvature_bounds}) holds for $\K$ as well; in particular points with finite viscosity mean curvature are smooth and the convergence is smooth in a neighborhood.
Once the curvature estimate is established, the rest is immediate.
\end{proof}

In particular, since we now know that Theorem \ref{thm-intro_smooth_convex_til_extinct} 
and Theorem  \ref{thm-intro_h_gives_global_convergence} hold for all $\al$-Andrews flows, we can apply them to blowup sequences to recover the main results from \cite{white_nature} (see also \cite{white_subsequent}):

\begin{corollary}[Limit flows]\label{cor_white_limits}
If $\K$ is a compact level set flow with smooth mean convex initial condition, then all its limit flows $\K'$ are smooth and convex until they disappear.
Furthermore, if $\K'$ is backwardly self-similar, then it is either (i) a static
halfspace or (ii) a shrinking round sphere or cylinder.
\end{corollary}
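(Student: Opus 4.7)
The plan is to reduce the corollary to a direct application of the structure theorem for ancient $\alpha$-Andrews flows (Theorem \ref{thm-intro_smooth_convex_til_extinct}). The main step is simply to verify that every limit flow of a compact mean convex level set flow is an ancient $\alpha$-Andrews flow, after which all conclusions follow by invoking the appropriate parts of the structure theorem.

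First I would observe that since the initial condition $K_0$ is smooth, compact, and mean convex, it is $\alpha$-Andrews for some $\alpha > 0$. Consequently, by the extension of Andrews' theorem to the nonsmooth setting (stated just before Definition \ref{def_andrews_flows}), the level set flow $\K$ is an $\alpha$-Andrews level set flow, and in particular an $\alpha$-Andrews flow in the sense of Definition \ref{def_andrews_flows}. By the closure of the class of $\alpha$-Andrews flows under parabolic rescaling and Hausdorff limits, any limit flow $\K'$ obtained as a Hausdorff limit of parabolic rescalings $(p,t)\mapsto (\lambda_j(p-p_j),\lambda_j^2(t-t_j))$ with $\lambda_j\to\infty$ is again an $\alpha$-Andrews flow. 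Since the basepoints $(p_j,t_j)$ lie in the compact spacetime track of $\K$ and the scaling factors blow up, the rescaled flows are eventually defined in arbitrarily large parabolic balls around the origin, so $\K'$ is defined on the whole of backward spacetime, i.e.\ it is an ancient $\alpha$-Andrews flow on $\R^N\times(-\infty,T_0)$ for some $T_0\in(-\infty,\infty]$.

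Now I would invoke Theorem \ref{thm-intro_smooth_convex_til_extinct} directly. Letting $T\in(-\infty,T_0]$ denote the extinction time of $\K'$, part (1) of the structure theorem gives that $\K'\cap\{t<T\}$ is smooth, and part (3) gives that its time slices are convex; together these yield smoothness and convexity of $\K'$ until it disappears, which is the first assertion. For the second assertion, if $\K'$ is backwardly self-similar then the concluding statement of Theorem \ref{thm-intro_smooth_convex_til_extinct} directly implies that $\K'$ is either a static halfspace or a shrinking round sphere or cylinder.

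I do not anticipate any serious obstacle: once the correct framework (the class of $\alpha$-Andrews flows being closed under the relevant operations) has been set up earlier in the paper, the corollary is an essentially immediate corollary of Theorem \ref{thm-intro_smooth_convex_til_extinct}. The only mild subtlety is verifying that a blowup based at an arbitrary sequence of spacetime points in $\K$ indeed produces a limit whose domain of definition sweeps out all of backward spacetime, but this follows from the fact that the rescaled flows are eventually defined in parabolic balls of arbitrarily large radius around the basepoint.
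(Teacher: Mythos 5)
Your proof takes essentially the same route as the paper: the corollary is stated as an immediate consequence of the structure theorem for ancient $\al$-Andrews flows, once one observes that the level set flow is $\al$-Andrews (by the nonsmooth extension of Andrews' theorem) and that its limit flows are ancient $\al$-Andrews flows (by the closure properties in Definition \ref{def_andrews_flows}). The only thing worth flagging is a small imprecision in your justification of ancientness: it is not literally true that "the rescaled flows are eventually defined in parabolic balls of arbitrarily large radius around the basepoint" merely because the spacetime track is compact and $\la_j\to\infty$; if $t_j\to 0$ faster than $\la_j^{-2}$, the rescaled domains do not exhaust backward time. The paper handles this by explicitly assuming, in the discussion preceding Theorem \ref{thm-intro_smooth_convex_til_extinct}, that the basepoints do not hit the boundary of the domain of definition, and you should invoke that same assumption rather than treat it as automatic. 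Also, the paper mentions applying Theorem \ref{thm-intro_h_gives_global_convergence} alongside the structure theorem; this is used for the stronger assertion (made in the surrounding discussion but not in the corollary's statement) that the convergence $\K^j\to\K'$ is locally smooth away from extinction, so for the corollary as literally stated your use of the structure theorem alone is sufficient.
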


\begin{corollary}[Normalized limits]\label{cor_white_normalized_limits}
 Let $\K$ be a compact level set flow with smooth mean convex initial condition and let $(p_i,t_i)\in\D\K$ be a sequence of regular points with $H(p_i,t_i)\to\infty$.
Then the domains $\hat K_i$ obtained by translating $K_{t_i}$ by $-p_i$ and rescaling by $H(p_i,t_i)$ converge smoothly (modulo subsequence) to a smooth convex domain $\hat K$.
\end{corollary}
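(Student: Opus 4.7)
The plan is to realize the corollary as a direct application of the global convergence theorem (Theorem \ref{thm-intro_h_gives_global_convergence}) to the blow-ups of $\K$ at $(p_i,t_i)$ with scales $r_i := H(p_i,t_i)^{-1}$. Since $K_0$ is smooth, compact and mean convex, the level set flow $\K$ is an $\al$-Andrews flow for some $\al = \al(K_0) > 0$ defined on all of $\R^N \times [0,\infty)$.

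With the above choice of $r_i$, the inequality $H(p_i,t_i) \leq r_i^{-1}$ becomes an equality, so the only hypothesis of Theorem \ref{thm-intro_h_gives_global_convergence} that needs checking is that for every $\eta < \infty$, for $i$ large the parabolic ball $P(p_i,t_i,\eta r_i)$ is contained in the spacetime domain of $\K$. The spatial constraint is automatic, so this reduces to the divergence $t_i H(p_i,t_i)^2 \to \infty$. To establish this, I would use the smoothness of the initial condition: by standard short-time existence for smooth mean curvature flow (or equivalently by the forward-in-time variant of Theorem \ref{thm-intro_local_curvature_bounds} mentioned in Remark \ref{rem-forward_extension}), there exists $\tau = \tau(K_0) > 0$ on which $\K$ is smooth with uniformly bounded second fundamental form. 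Hence $H(p_i,t_i) \to \infty$ forces $t_i > \tau$ for $i$ sufficiently large, so that $t_i H(p_i,t_i)^2 \to \infty$ as required.

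Applying Theorem \ref{thm-intro_h_gives_global_convergence} then yields, after passing to a subsequence, $C^\infty_{\loc}$ convergence of the rescaled flows
$$
\hat\K^i := \{\, H(p_i,t_i)(p-p_i),\; H(p_i,t_i)^2(t-t_i) \,\}
$$
on $\R^N \times (-\infty,0]$ to an $\al$-Andrews flow $\hat\K^\infty$ with convex time slices. Restricting the smooth convergence to the time $0$ slice, the domains $\hat K_i = H(p_i,t_i)(K_{t_i} - p_i)$ converge smoothly on compact subsets of $\R^N$ to the smooth convex domain $\hat K := \hat K^\infty_0$, as claimed.

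The only real technical obstacle in this scheme is the verification that $t_i$ stays bounded away from $0$, which, as sketched, amounts to the propagation of smoothness from the smooth initial condition on a definite time interval and is a prerequisite for the global convergence theorem to apply with a fixed basepoint sequence (no Hamilton-style point-picking is needed).
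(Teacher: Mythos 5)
Your proposal is correct and takes essentially the same approach as the paper, which simply invokes Theorem \ref{thm-intro_h_gives_global_convergence} (and Theorem \ref{thm-intro_smooth_convex_til_extinct}) applied to blow-up sequences without spelling out the details. You supply exactly the missing verification, namely that $t_iH(p_i,t_i)^2\to\infty$ via short-time existence with bounded curvature on a definite initial interval $[0,\tau]$, which is the right way to see that the parabolic balls $P(p_i,t_i,\eta r_i)$ eventually lie in the flow's domain.
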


Moreover, we can now give the proof, also  without Brakke flows, of the partial regularity result, Theorem \ref{thm_intro_partial_regularity} (c.f. \cite{white_size}):

\begin{proof}[Proof of Theorem \ref{thm_intro_partial_regularity}]
We prove both the theorem and the refinement for $k$-convex flows simultaneously.
By the localized version of Huisken's monotonicity formula (see Appendix \ref{app_huisken_monotonicity}; note that we need (\ref{app_loc_mon}) only in the smooth setting, since we can always apply it for the smooth approximators) and Theorem \ref{thm-intro_smooth_convex_til_extinct} every tangent flow must be be either (i) a static
multiplicity one plane or (ii) a shrinking sphere or cylinder $\R^j\times B^{N-j}$ with $j<k$.
By Theorem \ref{thm-halfspace_smooth_convergence} the singular set $\S\subset \D\K$ consists exactly of those boundary points where no tangent flow is a static halfspace.
Assume the parabolic Hausdorff dimension of $\S$ is bigger than $k-1$. Then, invoking Theorem \ref{thm-halfspace_smooth_convergence} again and blowing up at a density point we obtain a tangent flow whose singular set has parabolic Hausdorff dimension bigger than $k-1$; this contradicts the above classification of tangent flows.
\end{proof}

Finally, the strong Hausdorff convergence also ensures that $\al$-Andrews flows inherit many desirable properties from smooth $\al$-Andrews flows, in particular we have:

\begin{corollary}\label{properties_of_alpha_andrews}
Every $\alpha$-Andrews flow $\{K_t\}_{t\in I}$ defined in an open set satisfies the following properties:
\begin{enumerate}
\item (Two-sided avoidance)  $\{K_t\}_{t\in I}$ avoids compact smooth mean curvature flows
from the inside and outside:  if $J\subset I$ is a subinterval 
and $\{K_t'\subset U\}_{t\in J}$ is a family of compact
smooth domains moving by mean curvature flow which is 
initially disjoint from or initially contained in $\{K_t\}$, then it remains disjoint
from or contained in $\{K_t\}$, respectively.
\item (Mean convexity)  The family $\{K_t\}$ is monotonic, in the sense that
$\; t_2\geq t_1\;\implies\; K_{t_2}\subseteq K_{t_1}$.
\item (Andrews condition) $K_t$ satisfies the viscosity $\al$-Andrews condition for every $t\in I$. 
\item ($\eps$-regularity) There are $\eps,\rho>0$ such that if both $\{K_t\}$ and
its complement $\{U\setminus K_t\}$ are  $(1+\eps)r$-
Hausdorff close to a halfspace (and its complement)
in a some parabolic ball $P(p,t,r)$,
then $\{K_t\}$ is smooth in the parabolic ball $P(p,t,\rho r)$. 
\end{enumerate}
\end{corollary}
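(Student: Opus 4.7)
The strategy is to verify each of the four properties (1)--(4) first for the generators of the class given in Definition \ref{def_andrews_flows} (smooth $\alpha$-Andrews flows and compact $\alpha$-Andrews level set flows with smooth initial condition), and then track stability under the four operations: restriction to open sets, restriction to subintervals, parabolic rescaling, and passage to Hausdorff limits. For all four properties, restriction and parabolic rescaling pass in an obvious way since the conditions are either local or parabolically scale-invariant. The only nontrivial step is Hausdorff limits, where Proposition \ref{prop_strongHD} and Corollary \ref{cor_strongHD}, i.e. strong Hausdorff convergence of both the flows and their boundaries, are the main tools.

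For (1), smooth $\alpha$-Andrews flows satisfy two-sided avoidance by the classical maximum principle, and $\alpha$-Andrews level set flows with smooth initial data satisfy it by Ilmanen's characterization of the level set flow as the maximal set flow. To preserve avoidance under $\K^j\to\K$, given an initially disjoint compact smooth MCF $\{K'_t\}_{t\in J}$, I slightly inflate it to $\{K'_{t,\delta}\}$, a smooth MCF defined on a slightly shorter interval, so that its initial disjointness from $K_{t_0}$ becomes strict; by strong Hausdorff convergence the perturbed flow is initially disjoint from $K^j_{t_0}$ for large $j$, so applying avoidance to $\K^j$ and letting $\delta\to 0$, $j\to\infty$ gives disjointness in the limit. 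The interior-containment case is analogous after a small inward deflation. For (2), mean convexity $t_2\geq t_1\Rightarrow K_{t_2}\subseteq K_{t_1}$ holds for the generators (mean convex smooth MCF and mean convex level set flows) and passes to Hausdorff limits immediately: interior points of $K_{t_2}$ lie in $K^j_{t_2}\subseteq K^j_{t_1}$ for large $j$ by Corollary \ref{cor_strongHD}, hence in $K_{t_1}$.

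For (3), a boundary point $(p,t)\in \partial K_t$ with viscosity $H(p,t)=+\infty$ requires no condition. If $H(p,t)<\infty$, pick a smooth compact supporting domain $X\subseteq K_t$ with $p\in\partial X$ and $H_{\partial X}(p)\leq H(p,t)+\eta$. Using the already-established avoidance (1) together with strong Hausdorff convergence, one finds boundary points $(p_j,t_j)\in\partial\K^j$ converging to $(p,t)$ with $H(p_j,t_j)\leq H(p,t)+2\eta$ for large $j$ (by comparing $\K^j$ with a small smooth MCF evolution of $\partial X$ and a matching outward perturbation). The Andrews balls $\bar B^{\Int/\Ext}_j$ at $(p_j,t_j)$ then have radius $\geq \alpha/(H(p,t)+2\eta)$, and a subsequence converges in Hausdorff distance to closed balls $\bar B^{\Int/\Ext}$ of that radius satisfying the required side conditions at $(p,t)$; letting $\eta\to 0$ yields the viscosity $\alpha$-Andrews condition. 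For (4), the $\varepsilon$-regularity is an immediate compactness consequence of the halfspace convergence Theorem \ref{thm-halfspace_smooth_convergence} combined with the local curvature estimate Theorem \ref{thm-intro_local_curvature_bounds}: a sequence contradicting the claim, after parabolic rescaling, would satisfy the hypotheses of Theorem \ref{thm-halfspace_smooth_convergence} and thus converge smoothly to a static halfspace, yielding smoothness in $P(p,t,\rho r)$. The main obstacle is (3), where transferring the viscosity Andrews condition across a Hausdorff limit forces one to order the proof so that (1) is in hand before (3), and then to control $H(p_j,t_j)$ from above using the smooth supporting domain; once this transfer is set up, the strong Hausdorff convergence of the resulting inscribed and circumscribed balls closes the argument.
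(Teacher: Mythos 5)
Your overall strategy — verify the four properties for the two families of generators in Definition~\ref{def_andrews_flows}, then propagate them under restriction, rescaling and Hausdorff limits, with the Hausdorff limit case carried by Proposition~\ref{prop_strongHD} and Corollary~\ref{cor_strongHD} — matches exactly what the paper does implicitly; the paper does not write out the argument but simply invokes the Claim in Section~\ref{subsec_strongHD}, where the key point is that once the curvature estimate (Theorem~\ref{thm-intro_local_curvature_bounds}) is transferred to limits, boundary points with finite viscosity mean curvature are smooth, the convergence $\K^j\to\K$ is \emph{smooth} in a parabolic neighborhood of any such point, and then ``the rest is immediate.'' Your treatments of (1) and (2) are fine. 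For (3), rather than routing through the curvature estimate, you give a direct argument via supporting domains, avoidance, and an outward perturbation to locate boundary points $(p_j,t_j)\in\D\K^j$ near $(p,t)$ with $H(p_j,t_j)\leq H(p,t)+2\eta$; this works, but note that you are implicitly reproving a piece of the transfer the paper gets for free from smooth convergence near finite-$H$ points — and to rule out the first touching of your pushed-out $X_t$ against $\D K^j_t$ happening at some point far from $p$, you should explicitly invoke Corollary~\ref{cor_strongHD}(1) (Hausdorff convergence of the boundaries at fixed time), which makes the push-out distance near $p$ go to zero while staying bounded below away from $p$. For (4), your compactness reduction is the right idea but the citation is slightly off: a contradicting sequence, rescaled to unit scale, is only defined in a bounded parabolic ball $P(0,0,1)$, so hypothesis~(1) of Theorem~\ref{thm-halfspace_smooth_convergence} (defined in $P(0,T_0,R)$ for all $R$) is \emph{not} met; what you actually need is the second half of that theorem's proof — the one-sided minimization/density bound of Remark~\ref{rem_one_sided_minimization} together with the local regularity theorem of Appendix~\ref{app_easy_brakke} — applied at a single scale, which gives smoothness in $P(0,0,\rho)$ and the desired contradiction.
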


\begin{remark}\label{axiom_remark}
In this paper we have chosen the approach of proving the results for smooth flows first and passing them to the limit afterwards.
Alternatively, and somewhat more axiomatically, we could have started with a family of closed sets $\{K_t\}_{t\in I}$ satisfying the properties (1)--(4). Our proofs carry over to this more general setting almost verbatim.
\end{remark}

\appendix

\section{Preserved curvature conditions and rigidity}\label{app_preserved_curv}

Let $\{M_t\}$ be a smooth mean curvature flow of oriented hypersurfaces. The evolution equation for the position vector, $\partial_t x =\vec{H}(x)$, implies evolution equations for all other geometric quantities (see e.g. \cite[Sec. 3]{Huisken_convex}), in particular
\begin{equation}\label{app_eqnH}
 \partial_t H=\Lap H +\abs{A}^2 H,
\end{equation}
for the mean curvature $H$ (here $\abs{A}^2$ denotes the square norm of $A$, in other words the sum of the squares of the principal curvatures), and
\begin{equation}\label{app_eqnA}
 \partial_t A=\Lap A +\abs{A}^2 A,
\end{equation}
for the second fundamental form (aka Weingarten map) $A=A_i^j$.

Equation (\ref{app_eqnH}) immediately implies, for let's say closed $\{M_t\}$, that the conditions $H\geq 0$ and $H>0$ are preserved along the flow, i.e. if they hold at the initial time $t=0$, then they hold for all $t\geq 0$; in fact, the reader can easily see from (\ref{app_eqnH}) that the minimum of the mean curvature is nondecreasing in time (provided it is nonnegative).

Similarly, appling the tensor maximum principle (see e.g. \cite[Sec. 4]{Hamilton_tensor_maximum}) to equation (\ref{app_eqnA}), one gets that the conditions
$\lambda_1+\ldots +\lambda_k \geq 0$, $\lambda_1+\ldots +\lambda_k > 0$ and $\lambda_1+\ldots +\lambda_k \geq \beta H$ are also preserved along the flow.
To see this, one merely has to observe that $\lambda_1+\ldots \lambda_k$ is a concave function on the space of matrices, c.f. \cite[Prop. 2.6]{huisken-sinestrari3}.

Finally, let us discuss the rigidity in the equality case of the maximum principle. This discussion is -- in contrast to the previous one -- purely local.
For (\ref{app_eqnH}) the strict maximum principle says that in a mean convex flow, $H$ must be stricly positive at every point, unless $H\equiv 0$ in a parabolic neighborhood. More interestingly, an investigation of the rigidity for (\ref{app_eqnA}) gives the following:

\begin{proposition}[Strict maximum principle for $\frac{\lambda_1}{H}$]
If $\{M_t\}$ is a smooth strictly mean convex mean curvature flow in $P(p,t,r)$ and $\frac{\lambda_1}{H}$ attains its minimum $\gamma$ over $P(p,t,r)$ at $(p,t)$,
then $\gamma$ must be nonnegative. Furthermore, if $\gamma=0$ then $\{M_t\}$ locally splits off a line in direction of any zero-eigenvector. In particular, the final time slice $M_t$ cannot be locally isometric to a part of a cone.
\end{proposition}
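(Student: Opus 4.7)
The strategy is to put the Weingarten operator $A$ divided by $H$ into pure tensor diffusion form, apply Hamilton's strong maximum principle for symmetric tensors, and then use the Gauss equation together with strict mean convexity $H>0$ to obtain all three conclusions. The key algebraic observation is that from the evolution equations $\partial_t A=\Delta A+|A|^2 A$ and $\partial_t H=\Delta H+|A|^2 H$ the quadratic reaction terms cancel in the ratio, yielding
\begin{equation*}
\partial_t (A/H)=\Delta(A/H)+2\langle \nabla\log H,\nabla(A/H)\rangle,
\end{equation*}
a pure tensor diffusion equation with smooth drift (valid since $H>0$).

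Hamilton's strong tensor maximum principle then applies to the nonnegative symmetric endomorphism $Q:=A/H-\gamma I$: by hypothesis $Q\geq 0$ on $P(p,t,r)$ and $Q$ has a nontrivial null vector at the top point $(p,t)$, so in some backward parabolic neighborhood the null space of $Q$ is a parallel subbundle with respect to the induced connection on each time slice. Picking any unit parallel tangent vector field $V$ in this null space gives $AV=\gamma HV$. Since $V$ is parallel, the orthogonal distribution is integrable with totally geodesic leaves and each time slice splits locally as a Riemannian product in the direction of $V$; in particular every sectional curvature of a $2$-plane containing $V$ vanishes.

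The Gauss equation evaluates this vanishing sectional curvature as $A(V,V)A(W,W)-A(V,W)^2=\gamma H\,A(W,W)$ for any $W\perp V$, so $\gamma H\,A(W,W)=0$ throughout the neighborhood. If $\gamma<0$, then $H>0$ forces $A(W,W)=0$ for every $W\perp V$, and $H=\operatorname{tr}A=\gamma H$ yields $(1-\gamma)H=0$; combined with $H>0$ this is absurd, proving $\gamma\geq 0$. The degenerate case in which the null subbundle of $Q$ is the entire tangent bundle is handled separately: then $A=\gamma H\cdot I$ is umbilic, $H=(N-1)\gamma H$, and $\gamma=1/(N-1)>0$, still consistent with the claim.

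When $\gamma=0$ we have $AV=0$, so by the Gauss formula $\nabla^{\R^N}_X V=\langle X,AV\rangle\nu=0$, which shows $V$ is constant in ambient Euclidean space; each integral curve of $V$ is therefore a straight line of $\R^N$, and $M_t$ locally splits off a line extrinsically as asserted. The cone corollary is immediate: a smooth non-hyperplane cone has a radial zero-eigendirection, so the resulting extrinsic splitting would force translation invariance along the radial direction, contradicting the conical shape. The main technical point in writing out the argument is to cite Hamilton's strong maximum principle in the correct tensor form (pure diffusion with first-order drift yielding a parallel null subbundle), but this is standard once the reaction term has been absorbed into the drift as above.
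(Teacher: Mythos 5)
Your proposal is correct and follows essentially the same route as the paper: cancel the reaction terms to put $A/H$ into pure drift-diffusion form, invoke Hamilton's strong tensor maximum principle to get a parallel null subbundle and a local de Rham product splitting, and then close via the Gauss equation, exactly as the paper sketches (with pointers to Hamilton's paper and White's Appendix A). The only cosmetic differences are (i) the paper's contradiction when $\gamma<0$ is slightly shorter, simply noting that the sectional curvature $\lambda_1\lambda_{N-1}$ is strictly negative yet must vanish by the splitting, whereas you deduce $A|_{N^\perp}=0$ and contradict the trace; and (ii) your sentence "every sectional curvature of a $2$-plane containing $V$ vanishes" is only accurate for planes spanned by $V$ and a vector in $N^\perp$ — when the null subbundle $N$ has dimension $\geq 2$, planes inside $N$ have curvature $(\gamma H)^2\neq0$ when $\gamma\neq 0$, and the existence of a parallel \emph{field} (rather than merely a parallel subbundle) deserves a word — but since you always pair $V$ with $W\in N^\perp$ and treat the degenerate case $N=TM$ separately, the argument is unaffected.
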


\begin{proof}(c.f. \cite[Sec. 8]{Hamilton_tensor_maximum}, \cite[App. A]{white_nature}).
Assume $\gamma<0$. Since $\lambda_1<0$ and $\lambda_{N-1}>0$ the Gauss curvature $K=\lambda_1\lambda_{N-1}$ is strictly negative.
However, by the strict maximum principle, the hypersurface locally splits as a product and thus this Gauss curvature must vanish; a contradiction.
Furthermore, if $\gamma=0$ then the strict maximum principle again enforces an isometric splitting.
In particular, moving along the radial direction of a cone we have that $\lambda_1=0$, but that $H$ scales like $r^{-1}$; since $H$ is nonzero, this contradicts the product structure. 
\end{proof}

\begin{corollary}
 If ${M_t\subset \R^N}$ is a smooth mean curvature flow of convex hypersurfaces such that $\lambda_1+\ldots \lambda_k\geq \beta H>0$ everywhere ($\beta>0$), and $\lambda_1+\ldots+\lambda_{k-1}=0$ at some point, then
$M_t=\R^{k-1}\times N_t$ for some stricly convex $N_t$.
\end{corollary}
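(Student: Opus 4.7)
The plan is to deduce the corollary from the preceding strict maximum principle for $\lambda_1/H$, applied iteratively (or equivalently, from a single application of Hamilton's strong tensor maximum principle to the Weingarten map $A$), and then to upgrade the resulting local splitting to a global one by uniqueness of smooth mean curvature flow.

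First, I would observe that convexity ($A\ge 0$) together with the hypothesis $\sum_{i=1}^{k-1}\lambda_i=0$ at some spacetime point $(p_0,t_0)$ forces $\lambda_1(p_0,t_0)=\cdots=\lambda_{k-1}(p_0,t_0)=0$; let $E\subset T_{p_0}M_{t_0}$ denote the resulting $(k-1)$-dimensional null subspace of $A_{(p_0,t_0)}$. The assumption $\sum_{i=1}^{k}\lambda_i\ge \beta H>0$ guarantees strict mean convexity, so $\lambda_1/H$ is well-defined and nonnegative everywhere by convexity, and therefore attains its minimum value $0$ at $(p_0,t_0)$. The preceding proposition then produces a local splitting off a line in the direction of every vector in $E$.

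Second, I would iterate. After peeling off one line the reduced flow in $\R^{N-1}$ is again smooth, convex and strictly mean convex, now with principal curvatures $\lambda_2,\ldots,\lambda_{N-1}$ satisfying $\sum_{i=2}^{k-1}\lambda_i=0$ at the corresponding point. Applying the proposition again peels off another line, and after $k-1$ such applications one obtains a local extrinsic splitting $M_t=\R^{k-1}\times N_t$ in a parabolic neighborhood of $(p_0,t_0)$, where $N_t\subset\R^{N-k+1}$ has principal curvatures $\lambda_k,\ldots,\lambda_{N-1}$; the identity $\lambda_k=\sum_{i=1}^k\lambda_i-\sum_{i=1}^{k-1}\lambda_i\ge \beta H>0$ and the ordering $\lambda_k\le\cdots\le\lambda_{N-1}$ then yield that $N_t$ is strictly convex. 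A parallel one-shot approach is to invoke Hamilton's strong tensor maximum principle for $A$ directly: for each $(k-1)$-plane $V$ the cone $\{A:\ker A\supseteq V\}$ is preserved, producing a parallel $(k-1)$-dimensional null subbundle of $TM$, and the Codazzi equation $\nabla_iA_{jk}=\nabla_jA_{ik}$ then makes the corresponding tangent directions constant in the ambient $\R^N$, yielding the same extrinsic product structure.

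Finally, to promote this local splitting to the global statement, I would appeal to the uniqueness of smooth mean curvature flow: on the open set where the product structure has been established, the flow agrees with the MCF of its (product) initial data, which is itself globally a product; connectedness of the flow in spacetime then extends the product structure everywhere. The main obstacle I expect is this last propagation step, since the maximum principle argument yields only local information in a parabolic neighborhood; however, it is standard once one notes that the product of a flat $\R^{k-1}$ with any MCF in $\R^{N-k+1}$ is itself a smooth MCF solution with the correct initial data, so uniqueness forces the global splitting.
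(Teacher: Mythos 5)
Your proof is correct and takes the route the paper intends: the corollary is stated as an immediate consequence of the preceding strict maximum principle for $\lambda_1/H$, and you supply exactly the expected chain of reasoning (convexity and the pinching $\lambda_1+\ldots+\lambda_k\geq\beta H>0$ force a $(k-1)$-dimensional kernel of $A$ at the distinguished point with $\lambda_k>0$; Hamilton's strong tensor maximum principle together with Codazzi yields the parallel extrinsic splitting; forward uniqueness of mean curvature flow propagates the product structure to later times). The paper gives no explicit argument here, and yours fills in precisely the implicit details.
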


\section{Huisken's monotonicity formula}\label{app_huisken_monotonicity}

Let $\{M_t\subset \R^N\}$ be a smooth mean curvature flow of hypersurfaces, say with at most polynomial volume growth.
Let $X_0=(x_0,t_0)\in \R^{N,1}$, and let
\begin{equation}
 \rho_{X_0}(x,t)=(4\pi(t_0-t))^{-(N-1)/2} e^{-\frac{\abs{x-x_0}^2}{4(t_0-t)}}\qquad (t<t_0),
\end{equation}
be the $(N-1)$-dimensional backwards heat kernel centered at $X_0$. Notice that $\rho_{X_0}$ is normalized such that the integral over any $(N-1)$-plane containing $X_0$ (e.g. over $T_{x_0}M_{t_0}$ in case $X_0\in \M$) equals one.
Huisken discovered the monotonicity formula \cite[Thm 3.1]{Huisken_monotonicity},
\begin{equation}
 \frac{d}{dt}\int_{M_t} \rho_{X_0} dA = -\int_{M_t} \left|\vec{H}-\frac{(x-x_0)^\perp}{2(t-t_0)}\right|^2 \rho_{X_0} dA\qquad (t<t_0).
\end{equation}
Huisken's monotonicity formula can be thought of as localized version of the fact that the mean curvature flow is the gradient flow of the area functional.
Also, the equality case exactly characterized the selfsimilarly shrinking solutions of the mean curvature flow, i.e. assuming $X_0=(0,0)$ the solutions satisfying $M_t=\sqrt{-t}M_{-1}$.

If $M_t$ is only defined locally, say in $B(x_0,\sqrt{2N}\rho)\times (t_0-\rho^2,t_0)$, then one can use the cutoff function
\begin{equation}
 \varphi_{X_0}^\rho(x,t)=\left(1-\frac{\abs{x-x_0}^2+2(N-1)(t-t_0)}{\rho^2}\right)_+^3
\end{equation}
and still gets the monotonicity inequality \cite[Prop. 4.17]{Ecker_book},
\begin{equation}\label{app_loc_mon}
 \frac{d}{dt}\int_{M_t} \rho_{X_0}\varphi_{X_0}^\rho dA \leq -\int_{M_t} \left|\vec{H}-\frac{(x-x_0)^\perp}{2(t-t_0)}\right|^2 \rho_{X_0}\varphi_{X_0}^\rho dA.
\end{equation}
The monotone quantity appearing on the left hand side,
\begin{equation}
\Theta^\rho(\M,X_0,r)=\int_{M_{t_0-r^2}} \rho_{X_0}\varphi_{X_0}^\rho dA,
\end{equation}
is called the Gaussian density ratio, and plays an important role in the local regularity theorem.

\section{The local regularity theorem}\label{app_easy_brakke}

\begin{theorem}[Easy Brakke, \cite{white_regularity}]\label{app_thm_easy_brakke}
There exist universal constants $\eps>0$ and $C<\infty$ with the following property.
If $\M$ is a smooth mean curvature flow of hypersurfaces in a parabolic ball $P(X_0,2N\rho)$ with
\begin{equation}
 \sup_{X\in P(X_0,r)}\Theta^{\rho}(\M,X,r)<1+\eps
\end{equation}
for some $r\in(0,\rho)$, then
\begin{equation}
 \sup_{P(X_0,r/2)}\abs{A}\leq \frac{C}{r}.
\end{equation}
\end{theorem}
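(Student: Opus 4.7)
The plan is to prove Theorem \ref{app_thm_easy_brakke} by a contradiction and rescaling argument, using Huisken's monotonicity formula from Appendix \ref{app_huisken_monotonicity} to force the blowup limit to be a flat static plane.

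\textbf{Setup.} Suppose the theorem fails. Then there exist sequences $\eps_j\downarrow 0$, smooth mean curvature flows $\M^j$ in parabolic balls $P(X_0^j,2N\rho_j)$, and scales $r_j\in(0,\rho_j)$ such that
\begin{equation}
\sup_{X\in P(X_0^j,r_j)}\Theta^{\rho_j}(\M^j,X,r_j)<1+\eps_j,
\end{equation}
but $\sup_{P(X_0^j,r_j/2)}|A|>j/r_j$. By translating and parabolically rescaling, I may assume $X_0^j=0$ and $r_j=1$.

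\textbf{Point-picking.} I would first upgrade the curvature blowup to a controlled one via a standard Hamilton-style point-picking argument. Namely, choose $X_j=(x_j,t_j)\in P(0,1/2)$ and $\sigma_j\in(0,1/4)$ maximizing $\sigma|A|$ over the appropriate region so that $Q_j:=|A|(X_j)\to\infty$, $Q_j\sigma_j\to\infty$, and $|A|\leq 2Q_j$ on the parabolic ball $P(X_j,\sigma_j)$. Now parabolically rescale by $Q_j$ centered at $X_j$ to obtain flows $\tilde\M^j$ defined on larger and larger parabolic balls with $|A|\leq 2$ everywhere and $|A|(0,0)=1$.

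\textbf{Smooth limit and its density.} Standard interior estimates for mean curvature flow (derivatives of $A$ controlled by $|A|$, as in \cite[Prop. 3.22]{Ecker_book}) together with Arzel\`a--Ascoli let me extract a subsequential smooth limit $\tilde\M^\infty$, defined on all of spacetime up to the time corresponding to the parabolic ball, with $|A|\leq 2$ and $|A|(0,0)=1$. The monotonicity inequality (\ref{app_loc_mon}) applied to $\tilde\M^j$ at any basepoint $\tilde X\in\tilde\M^\infty$ (pulled back from $\M^j$ via the rescaling) shows that the Gaussian density ratio $\Theta^{\tilde\rho_j}(\tilde\M^j,\tilde X,\tilde r)$ is sandwiched between a quantity that tends to $1$ from above (by hypothesis) and $1$ from below (by the fact that in the limit $\tilde\rho_j\to\infty$, so the cutoff $\varphi^{\tilde\rho_j}$ becomes irrelevant and $\Theta\geq1$ at any smooth point, since the heat kernel integrates to $1$ over the tangent plane). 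Hence the density ratio of $\tilde\M^\infty$ is identically $1$ at every spacetime point and every scale.

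\textbf{Rigidity via equality in Huisken's formula.} Constancy of the density ratio forces equality in the monotonicity formula, which in turn forces
\begin{equation}
\vec H-\frac{(x-x_0)^\perp}{2(t-t_0)}\equiv 0
\end{equation}
on $\tilde\M^\infty$ for every choice of basepoint $(x_0,t_0)$. Taking two distinct basepoints (say two different spatial points at the same time) and subtracting shows $(x-x_0)^\perp$ depends linearly on $x_0$ in a compatible way only if $\tilde\M^\infty$ is a static hyperplane through each basepoint; equivalently, $\vec H\equiv 0$ and the tangent plane is the same at every point. So $\tilde\M^\infty$ is a static multiplicity-one hyperplane, whence $|A|\equiv 0$. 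This contradicts $|A|(0,0)=1$.

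\textbf{Main obstacle.} The delicate point is ensuring the point-picking yields a nontrivial smooth limit while keeping the density ratio bound intact under rescaling: the parabolic balls on which the rescaled monotonicity holds must exhaust spacetime, which is why the point-picking must guarantee $Q_j\sigma_j\to\infty$. Once that is in place, the rest follows cleanly from Huisken's monotonicity formula and its rigidity. The constants $\eps$ and $C$ produced by this argument are universal because every step — point-picking, smooth compactness, and the equality case of monotonicity — is scale-invariant and depends only on the dimension.
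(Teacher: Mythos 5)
Your proposal follows essentially the same strategy as the paper's proof in Appendix \ref{app_easy_brakke}: argue by contradiction, normalize curvature via Hamilton-style point-picking, parabolically rescale to a smooth nonflat global limit, and then use Huisken's monotonicity formula to squeeze the Gaussian density ratio to exactly $1$ and force the limit to be a static plane. Your two-basepoint subtraction is a valid way to unpack the paper's terse appeal to ``the rigidity case''; one small imprecision is that the density ratio equals $1$ only at points \emph{on} the limiting hypersurface (not at every spacetime point), but that is exactly where you apply it, so the argument goes through.
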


\begin{remark}
Of course, the theorem can also be applied to limits of smooth flows. Moreover, for applications it is useful to observe the following: If $\Theta<1+\frac{\eps}{2}$ holds at some point and some scale, then $\Theta<1+\eps$ holds at all nearby points and all slightly smaller scales.
\end{remark}

\begin{remark}
For general Brakke flows the proof of the local regularity theorem is very difficult \cite{brakke,kasai_tonegawa}. However, as pointed out by White \cite{white_regularity} (see also \cite{anderson,Ecker_book}), there is a fairly simple argument in the smooth setting.
For convenience of the reader, we will now give (an even somewhat more streamlined variant of) this argument.
\end{remark}

\begin{proof}[Proof of Theorem \ref{app_thm_easy_brakke}]
 Suppose the assertion fails. Then there exist a sequence of smooth flows $\M^j$ in $P(0,2N \rho_j)$ for some $\rho_j> 1$ such that
\begin{equation}
 \sup_{X\in P(0,1)}\Theta^{\rho_j}(\M^j,X,1)<1+\frac{1}{j},
\end{equation}
but such that there are points $X_j\in P(0,1/2)$ with $\abs{A}(X_j)> j$.

By point selection, we can find $Y_j\in P(0,3/4)$ with $Q_j=\abs{A}(Y_j)> j$ such that
\begin{equation}\label{app_brakke_point_sel}
 \sup_{P(Y_j,j/10Q_j)}\abs{A}\leq 2 Q_j.
\end{equation}
For convenience of the reader, let us explain how the point selection works: Fix $j$. If $Y^0_j=X_j$ already satisfies (\ref{app_brakke_point_sel}) with $Q^0_j=\abs{A}(Y^0_j)$, we are done. Otherwise, there is a point $Y^1_j\in P(Y_j^0,j/10Q^0_j)$ with $Q^1_j=\abs{A}(Y^1_j)>2Q^0_j$.
If $Y^1_j$ satisfies (\ref{app_brakke_point_sel}), we are done. Otherwise, there is a point $Y^2_j\in P(Y_j^1,j/10Q^1_j)$ with $Q^2_j=\abs{A}(Y^2_j)>2Q^1_j$, etc.
Note that $\frac{1}{2}+\frac{j}{10Q_j^0}(1+\frac{1}{2}+\frac{1}{4}+\ldots)<\frac{3}{4}$. By smoothness, the iteration terminates after a finite number of steps, and the last point of the iteration lies in $P(0,3/4)$ and satisfies (\ref{app_brakke_point_sel}).

Continuing the proof of the theorem, let $\hat\M^j$ be the flows obtained by shifting $Y_j$ to the origin and parabolically rescaling by $Q_j=\abs{A}(Y_j)\to\infty$.
Since $\abs{A}(0)=1$ and $\sup_{P(0,j/10)}\abs{A}\leq 2$, we can pass smoothly to a nonflat global limit. On the other hand, by the rigidity case of (\ref{app_loc_mon}), and since
\begin{equation}
 \Theta^{\hat\rho_j}(\hat\M^j,0,Q_j)<1+j^{-1},
\end{equation}
where $\hat\rho_j=Q_j\rho_j\to\infty$, the limit is a flat plane; a contradiction.
\end{proof}

\section{Removing the admissibility assumption}\label{app_remove_admissibility}

In Section \ref{subsec_halfspace_conv} and Section \ref{subsec_curv_est} we worked under the technical assumption that some time slice $K_{\bar t}$ contains $B(p,r)$. We will now prove that this assumption actually can be removed.

Let $\rho=\rho(\al)$, $C_0=C_0(\al)$ be the constants from the admissible version of the curvature estimate, Theorem 1.8'.

\begin{claim}\label{claim_adm}
There is an $\eta=\eta(\al)<\infty$ such that if $\K$ is a smooth $\al$-Andrews flow
defined in $P(p,t,\eta r)$ with $H(p,t)\leq r^{-1}$, then $\abs{A}\leq 2C_0r^{-1}$
in $P(p,t,\frac{\rho}{2} r)$.
\end{claim}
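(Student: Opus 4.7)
The plan is to reduce the claim to Theorem 1.8' applied at the halved scale $r/2$. The constants align precisely: $C_0\cdot(r/2)^{-1}=2C_0 r^{-1}$ and $\rho\cdot(r/2)=\rho r/2$, so Theorem 1.8' at scale $r/2$ delivers exactly the bound $\sup_{P(p,t,\rho r/2)}|A|\leq 2C_0 r^{-1}$. Of the three hypotheses of Theorem 1.8' at scale $r/2$, two are immediate from the claim's hypothesis ($\K$ is defined in $P(p,t,r/2)\subseteq P(p,t,\eta r)$, and $H(p,t)\leq r^{-1}\leq (r/2)^{-1}$), so the whole task reduces to verifying the admissibility condition at scale $r/2$: namely, that $K_{\bar t}\supseteq B(p,r/2)$ for some $\bar t\in(t-(\eta r)^2, t]$.

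To verify this admissibility, I would argue by contradiction. If no finite $\eta=\eta(\al)$ worked, we would obtain a sequence of smooth $\al$-Andrews flows $\K^j$, base points $(p_j,t_j)\in\D\K^j$, and scales $r_j$ such that $\K^j$ is defined in $P(p_j,t_j,jr_j)$ with $H(p_j,t_j)\leq r_j^{-1}$, but for which no time slice $K^j_{\bar t}$ (with $\bar t$ in the domain) contains $B(p_j,r_j/2)$. After translating, rotating, and parabolically rescaling, we may assume $(p_j,t_j)=(0,0)$, $r_j=1$, outward normal $e_N$ at $(0,0)$, and $\K^j$ is defined in $P(0,0,j)$. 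The Andrews condition at $(0,0)$ furnishes an interior ball $B(-\al e_N,\al)\subseteq K^j_0$ and exterior ball $B(\al e_N,\al)\subseteq\R^N\setminus K^j_0$, and by mean convexity $B(-\al e_N,\al)\subseteq K^j_s$ for all $s\leq 0$ in the domain.

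The next step is to show that this rescaled sequence in fact satisfies the hypotheses of the admissible halfspace convergence theorem (Theorem \ref{thm-halfspace_smooth_convergence}): (1') is automatic for such blow-up sequences by Remark \ref{rem_admiss_always_holds}; (2) is the assumption $0\in\D K^j_0$; and (3), that compact subsets of $\{x_N<0\}$ lie in $K^j_0$ for large $j$, is extracted from the Andrews condition combined with forward/backward sphere comparison applied over the huge available time interval $(-j^2,0]$ — the interior balls at nearby boundary points of $\partial K^j_0$ propagate to cover an arbitrarily large portion of the lower halfspace. Once the halfspace convergence applies, we deduce smooth convergence of $\K^j$ to the static halfspace on compact subsets of spacetime; in particular, $K^j_{\bar t}\supseteq B(0,1/2)$ for any fixed $\bar t<0$ and $j$ large, contradicting the assumed failure of admissibility at scale $1/2$.

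The hardest point is rigorously establishing hypothesis (3) of halfspace convergence for the rescaled sequence — i.e., that compact subsets of the lower halfspace must eventually lie in $K^j_0$. Unlike in the direct halfspace convergence setup, here we start with only one interior ball of fixed radius $\al$ (not a whole halfspace of interior points), so the outward propagation must be extracted from the Andrews condition and the large parabolic domain of definition. I expect that the one-sided minimization argument of Remark \ref{rem_one_sided_minimization}, applied with $U$ taken to be the region swept by the foliation $\{\D K^j_s\}_{s\leq 0}$ emanating from the interior Andrews ball, yields the requisite density bound at every point of the halfspace boundary, which combined with the local regularity theorem (Appendix \ref{app_easy_brakke}) forces the smooth halfspace convergence and thereby closes the contradiction.
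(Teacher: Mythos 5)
The reduction to Theorem 1.8' at scale $r/2$ is correctly set up, and arguing by contradiction to establish admissibility is the right instinct. However, the argument for hypothesis (3) of the halfspace convergence theorem is the weak link, and in fact it is simply false under the stated hypotheses. You only know $H(0,0)\leq 1$; you do not know $H(0,0)\to 0$ along your contradiction sequence. Consequently the Andrews interior ball at the origin has radius bounded below by $\al$ but not tending to infinity, and there is no reason whatsoever for compact subsets of $\{x_N<0\}$ to be absorbed by $K^j_0$. A concrete instance: take $\K^j$ to be (for every $j$) a shrinking round cylinder normalized so that $0\in\D K^j_0$ with $H(0,0)$ a fixed constant less than $1$. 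This flow is ancient, hence defined in $P(0,0,j)$ for every $j$, and is $\al$-Andrews, yet its time-zero slice is a solid cylinder of fixed thickness — nowhere near a halfspace. Admissibility does hold here (at very negative times the cylinder is fat), but not via halfspace convergence. Your attempted rescues — interior balls at nearby boundary points "propagating," and the one-sided minimization density bound over a foliated region — cannot close this gap, because nearby boundary points may have arbitrarily large $H$ (so their Andrews balls are tiny), and the density bound relative to the foliation does not force the foliated region to resemble a halfspace.

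The paper's actual proof must, and does, contain two ingredients you have not supplied. First, a point-picking step that, within the contradiction sequence, locates a center where in addition the claim's conclusion is known to hold at all nearby points at all scales $r\leq \tfrac12$; this yields a curvature bound $|A|\leq 4C_0$ on a small parabolic ball around the base point, which you need and cannot get for free. Second, a dichotomy on $H_\infty=\lim H(0,0)$: if $H_\infty=0$, one runs essentially your halfspace argument (now legitimately, since the Andrews balls do blow up) and bootstraps smooth convergence via the point-picking property; if $H_\infty>0$, one instead uses the curvature bound plus gradient estimates to get $H$ bounded below near the origin, deduces $B(0,r_0)\subset K_{t_0}$ for fixed $r_0>0$, $t_0<0$, and then invokes the "definite progress going backwards in time" lemma (Lemma \ref{lemma_admissibility}), whose proof relies crucially on the Andrews condition forcing a minimum speed for the boundary, to conclude $B(0,1)\subset K_{\bar t}$ for some finite $\bar t$. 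Only then does Theorem 1.8' apply. Your write-up collapses the two cases into one and treats the genuinely hard case — $H$ bounded away from zero — as if it did not occur.
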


\begin{proof}
If not, there would be a sequence $\{\K^j\}$
of counterexamples with $\eta_j=j\ra \infty$.  By point picking, parabolic rescaling, and passing to a subsequence we may assume that:
\begin{enumerate}
\item $\K^j$ is defined in $P(0,0,j)$.
\item $H(0,0)\ra H_\infty\leq 1$ as $j\ra\infty$.
\item $\sup \abs{A}>2C_0$ in $P(0,0,\frac{\rho}{2})$, for every $j$.
\item If the hypotheses of the claim
hold for some $(p,t)\in P(0,0,j/5)$ and $r\leq \frac12$, then the conclusion holds as well.
\end{enumerate}

For convenience of the reader, let us explain the point picking: Fix $j$. Let $(p_0,t_0)=(0,0)$. If (4) holds we are done. Otherwise we can find $(p_1,t_1)\in P(0,0,j/5)$ and $r_1\leq 1/2$ such that $H(p_1,t_1)\leq r_1^{-1}$ but $\abs{A}>2C_0r_1^{-1}$ somewhere in $P(p_1,t_1,\frac{\rho}{2}r_1)$. Take $(p_1,t_1)$ as new center and rescale to normalize $r_1$. Iterate this process. By smoothness, it terminates after a finite number of steps, i.e. eventually (4) is satisfied.
(3) is satisfied by construction, and since $\frac{j}{5} (1+\frac{1}{2}+\frac{1}{4}+\ldots)<\frac{j}{2}$ we see that (1) holds as well. After passing to a subsequence, we get (2).

Having explained the point picking, let us now continue the proof of the claim. Suppose $H_\infty=0$. Then by the first part of the proof of the halfspace convergence theorem, we get that $\K^j$ subconverges weakly
to a static halfspace.  
Applying (4) with center $(0,0)$ we get that the convergence is smooth on $P(0,0,\frac{\rho}{4})$. Applying (4) repeatedly with different centers we can extend the smooth convergence to larger and larger parabolic balls; this contradicts (3).  Thus $H_\infty>0$. 

By (4), we have $\abs{A}\leq 4C_0$ in $P(0,0,\frac{\rho}{4})$.  Therefore, by standard gradient
estimates (see e.g. \cite[Prop. 3.22]{Ecker_book}) we get $H\geq \frac{H_\infty}{2}$ in $P(0,0,\rho_1)$, for $\rho_1=\rho_1(\rho,C_0,H_\infty)$.
It follows that there exist $r_0>0$ and $t_0<0$ independent of $j$ such
that  $B(0,r_0)\subset K_{t_0}$ for large $j$. But then by Lemma \ref{lemma_admissibility} below, there exists $\bar t>-\infty$ such that $B(0,1)\subset K_{\bar t}$ for
large $j$, i.e. $P(0,1)$ is admissible for large $j$. Thus, Theorem 1.8' implies $\abs{A}\leq 2C_0$ in 
$P(0,0,\frac{\rho}{2})$ for large $j$; this contradicts (4).
\end{proof}

\begin{lemma}[Definite progress going backwards in time]\label{lemma_admissibility}
For every $\al>0$, $t_0<0$, $r_0>0$ and $r_1<\infty$ there exist $R=R(\al,t_0,r_0,r_1)<\infty$ and $t_1=t_1(\al,t_0,r_0,r_1)>-\infty$ such that if
$\K$ is an $\al$-Andrews flow in $P(0,0,R)$ with $0\in\D K_0$ and $B(0,r_0)\subset K_{t_0}$, then $B(0,r_1)\subset K_{t_1}$.
\end{lemma}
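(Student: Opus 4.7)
The plan is to argue by contradiction using the structure theorem for ancient $\al$-Andrews flows (Theorem \ref{thm-intro_smooth_convex_til_extinct}). Assume the lemma fails for some $\al,t_0,r_0,r_1$; then we can choose sequences $R_j\to\infty$ and $t_{1,j}\to-\infty$ together with $\al$-Andrews flows $\K^j$ in $P(0,0,R_j)$ satisfying $0\in\D K_0^j$ and $B(0,r_0)\subseteq K_{t_0}^j$ for every $j$, but $B(0,r_1)\not\subseteq K_{t_{1,j}}^j$. By diagonalization on compact spacetime sets and the closure of $\al$-Andrews flows under Hausdorff limits (Definition \ref{def_andrews_flows}), a subsequence of $\{\K^j\}$ converges in the Hausdorff sense to an ancient $\al$-Andrews flow $\K^\infty$ on $\R^N\times(-\infty,0]$. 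Proposition \ref{prop_strongHD} upgrades this to strong Hausdorff convergence, so Corollary \ref{cor_strongHD} yields $0\in\D K_0^\infty$ and $B(0,r_0)\subseteq K_{t_0}^\infty$.

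Applying the structure theorem to $\K^\infty$ leaves two alternatives. The static halfspace alternative is incompatible with the hypotheses: if $K_t^\infty\equiv\Pi$ for some halfspace $\Pi$ with $0\in\D\Pi$, then $B(0,r_0)$ straddles $\D\Pi$, contradicting $B(0,r_0)\subseteq K_{t_0}^\infty$. So $\K^\infty$ has strictly positive mean curvature, convex time slices, and sweeps out all space: $\bigcup_{t<0}K_t^\infty=\R^N$.

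The heart of the proof is to produce $t^\ast<0$ with $\overline{B(0,r_1)}\subseteq\Int(K_{t^\ast}^\infty)$. Define the exit-time function $\tau(x)=\sup\{t\leq 0:x\in K_t^\infty\}$ on $\R^N$. Sweeping out gives $\tau>-\infty$ pointwise, while closedness of $\K^\infty$ in spacetime yields upper semicontinuity. For lower semicontinuity we exploit strict positivity of $H$: if $\tau(x)=s^\ast$ and $t<s^\ast$, then $x\in K_t^\infty$ by mean convexity; were $x$ in $\D K_t^\infty$, then smoothness of $\K^\infty$ for $t<0$ together with $H(x,t)>0$ would force the hypersurface to move strictly inward past $x$, giving $x\notin K_{t+\delta}^\infty$ for small $\delta>0$, in contradiction with $x\in K_{s^\ast}^\infty\subseteq K_{t+\delta}^\infty$ (valid once $t+\delta\leq s^\ast$). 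Hence $x\in\Int(K_t^\infty)$, a whole neighborhood of $x$ lies in $K_t^\infty$, so $\liminf_{y\to x}\tau(y)\geq t$, and letting $t\nearrow s^\ast$ yields $\liminf_{y\to x}\tau(y)\geq\tau(x)$. Thus $\tau$ is continuous and attains $t^\ast:=\min_{x\in\overline{B(0,r_1)}}\tau(x)>-\infty$; for any $t<t^\ast$, the compact slice $\overline{B(0,r_1)}\times\{t\}$ lies in $\Int(\K^\infty)$.

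The contradiction is now immediate: Corollary \ref{cor_strongHD} gives $\overline{B(0,r_1)}\times\{t\}\subseteq\Int(\K^j)$ for large $j$, hence $\overline{B(0,r_1)}\subseteq K_t^j$, and then mean convexity of $\K^j$ forces $B(0,r_1)\subseteq K_{t_{1,j}}^j$ as soon as $t_{1,j}\leq t$, contradicting the choice of $\K^j$. The main technical point is the continuity of $\tau$; the lower semicontinuity is the delicate step and genuinely requires the strict positivity of the mean curvature furnished by the non-halfspace case of the structure theorem.
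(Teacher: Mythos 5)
Your approach is genuinely different from the paper's — it is a soft compactness-and-contradiction argument invoking the structure theorem for ancient flows, whereas the paper's proof is a short, direct, quantitative argument. The issue is that your route is circular within the paper's logical architecture.

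Lemma \ref{lemma_admissibility} lives in Appendix \ref{app_remove_admissibility}, whose purpose is to upgrade the \emph{admissible} curvature estimate (Theorem 1.8', proved in Section \ref{sec-halfspace_and_consequences}) to the full Theorem \ref{thm-intro_local_curvature_bounds}: Claim \ref{claim_adm} uses Lemma \ref{lemma_admissibility} precisely to verify that, after going far enough back in time, some time slice contains a large ball. The global convergence theorem (Theorem \ref{thm-intro_h_gives_global_convergence}) then invokes the full Theorem \ref{thm-intro_local_curvature_bounds} in Step 1 (``$R_0>0$''), Step 2, and Step 3, and the structure theorem (Theorem \ref{thm-intro_smooth_convex_til_extinct}) is proved via the global convergence theorem. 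In fact Remark \ref{remark_sweepout} explicitly flags that the ``sweeps out all space'' alternative, which is exactly the ingredient you use to make your exit-time function $\tau$ finite, is itself obtained ``from an elementary argument based on the Andrews condition, c.f.\ Lemma \ref{lemma_admissibility}.'' So invoking Theorem \ref{thm-intro_smooth_convex_til_extinct}(2) here is running the argument in a circle.

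Your compactness skeleton is otherwise sound (the static-halfspace exclusion, the semicontinuity analysis of $\tau$, and the passage back to $\K^j$ via Proposition \ref{prop_strongHD} and Corollary \ref{cor_strongHD} all fit together), but you need a noncircular input to rule out the possibility that $\K^\infty$ never engulfs $\overline{B(0,r_1)}$. The paper instead proves the lemma directly and quantitatively: setting $f(t)=d(0,\D K_t)$, one shows that for $t\leq t_0$ the Andrews condition (interior tangent ball of radius $\alpha/H$ at the nearest boundary point) and avoidance against shrinking spheres force $\abs{f'(t)}\gtrsim \frac{\al r_0}{\abs{t}}$ — otherwise the forward sphere comparison would yield $f(0)>0$, contradicting $0\in\D K_0$. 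Integrating this logarithmic lower bound from $t_1$ to $t_0$ shows $f(t_1)\to\infty$ as $t_1\to-\infty$, giving an explicit $t_1$ with $f(t_1)\geq r_1$; the requirement that $R$ be large enough is just what legitimizes the sphere comparisons inside $P(0,0,R)$. This is elementary, noncircular, and yields effective constants, which is important since the lemma is deliberately placed at the base of the paper's bootstrap.
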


\begin{proof}[Proof (smooth case).]
The distance function
$f(t)=d(0,\D K_t)$ satisfies $\abs{f'(t)}\gtrsim \,\frac{\al r_0}{\abs{t}}$ for $t\leq t_0$.
Otherwise, the Andrews condition and comparison with spheres (which is legitimate provided $R$ is large enough) would give $f(0)>0$; a contradiction.
Thus, the claim follows by integration.
\end{proof}

By Claim \ref{claim_adm}, after adjusting $C_\ell$ and $\rho$, we get the curvature estimate without admissibility assumption, 
Theorem \ref{thm-intro_local_curvature_bounds}. 
As a consequence, in the halfspace convergence theorem, Theorem \ref{thm-halfspace_smooth_convergence}, it really suffices to impose the assumption (1) and there is no need for assumption (1').
Indeed, by interior contact with spheres one can find boundary points with arbitrarily small curvature, and then one can simply apply Theorem \ref{thm-intro_local_curvature_bounds}.

\bibliography{mean_convex_flow}

\bibliographystyle{alpha}

\vspace{10mm}
{\sc Courant Institute of Mathematical Sciences, New York University, 251 Mercer Street, New York, NY 10012, USA}

\emph{E-mail:} robert.haslhofer@cims.nyu.edu, bkleiner@cims.nyu.edu

\end{document}